\newcommand{\xMapsto}[2][]{\ext@arrow 0599{\Mapstofill@}{#1}{#2}}
\def\Mapstofill@{\arrowfill@{\Mapstochar\Relbar}\Relbar\Rightarrow}
\newcommand{\cB}{{\mathcal B}}
\newcommand{\cD}{{\mathcal D}}
\newcommand{\cE}{{\mathcal E}}
\newcommand{\cF}{{\mathcal F}}
\newcommand{\cG}{{\mathcal G}}
\newcommand{\cH}{{\mathcal H}}
\newcommand{\cK}{{\mathcal K}}
\newcommand{\cO}{{\mathcal O}}
\newcommand{\cQ}{{\mathcal Q}}
\newcommand{\bV} {{\textbf{V}}}
\newcommand{\bcK}{{\boldsymbol{\mathcal K}}}
\newcommand{\bPi}{{\boldsymbol{\Pi}}}
\newcommand{\bT}{{\mathbb{\mathbb T}}}
\newcommand{\bD}{{\mathbb D}}
\newcommand{\oq}{{\overline{q}}}
\newcommand{\bev}{\mathbf{ev}}
\newcommand\restr[2]{\ensuremath{\left.#1\right|_{#2}}}
\newcommand{\sbm}[1]{\left[\begin{smallmatrix} #1
		\end{smallmatrix}\right]}
\newtheorem{thm}{Theorem}[section]
\newtheorem{corollary}[thm]{Corollary}
\newtheorem{lemma}[thm]{Lemma}
\newtheorem{proposition}[thm]{Proposition}
\theoremstyle{definition}
\newtheorem{definition}[thm]{Definition}
\newtheorem{remark}[thm]{Remark}
\newtheorem{example}[thm]{Example}
\numberwithin{equation}{section}
\def\textmatrix#1&#2\\#3&#4\\{\bigl({#1 \atop #3}\ {#2 \atop #4}\bigr)}
\def\dispmatrix#1&#2\\#3&#4\\{\left({#1 \atop #3}\ {#2 \atop #4}\right)}
\numberwithin{equation}{section}
\def\textmatrix#1&#2\\#3&#4\\{\bigl({#1 \atop #3}\ {#2 \atop #4}\bigr)}
\def\dispmatrix#1&#2\\#3&#4\\{\left({#1 \atop #3}\ {#2 \atop #4}\right)}
\begin{document}
	\title[Dilation and Model Theory]{Dilation and Model Theory for Pairs of Contractions with a Twisted Commutation Relation}

		\author[Sourav Ghosh]{Sourav Ghosh}
	\address[]{Department of Mathematics, Indian Institute of Science Education and Research Pune, Dr.\ Homi Bhabha Road, Maharashtra 411008, India.}
	\email{sourav.ghosh@students.iiserpune.ac.in}
	

	\subjclass[2020]{Primary: 47A13; Secondary: 47A20, 47A25, 47A56, 47A68, 30H10}
	\keywords{Twisted contractive pairs; q-commuting contractions, isometric lifts, dilation, operator model, characteristic function, unitary invariant.}

	\begin{abstract}
	In this note, we develop a parallel theory of the classical Sz.-Nagy--Foias dilation and model theory for a single contraction operator in the setting of pairs of \em{{$q$-commuting}} contraction operators for a unimodular complex number $q$. 
	\end{abstract}
	
	\maketitle
	
	\section{Introduction}
	A celebrated theorem by Sz.-Nagy \cite{sz-nagy} states that every contraction on a Hilbert space admits a co-isometric extension. More precisely, for any contraction \( T \) acting on a Hilbert space \( \mathcal{H} \), there exists an isometry \( V \) acting on a Hilbert space \( \mathcal{K} \) that contains \( \mathcal{H} \), such that \( T^* = V^*|_{\mathcal{H}} \). A similar result for a pair of commuting contractions was provided in a seminal paper by And\^o \cite{ando}, which shows that any pair \( (T_1, T_2) \) of commuting contractions on a Hilbert space \( \mathcal{H} \) has a commuting isometric pair \( (V_1, V_2) \) on a Hilbert space \( \cK \) containing \( \cH \) as a co-isometric extension. Although the extension of And\^o's theorem to more than two variables is no longer valid (see \cite{Parrott}), it has inspired numerous generalizations across various frameworks. This note is concerned with the extension of And\^o's Theorem in the non-commutative setting. Sebesty\'en \cite{Sebestyen} obtained the first non-commutative generalization of And\^o's theorem, focusing on the anti-commutative case. He showed that if $(T_1,T_2)$ is a contractive pair which is \textit{anti-commuting}, i.e., $T_1T_2=-T_2T_1$, then there is an anti-commuting isometric lift $(V_1,V_2)$ of $(T_1,T_2)$. Extending Sebesty\'en's work, Keshari and Mallick showed in \cite{KM2019} that if $(T_1,T_2)$ is a contractive pair which is \emph{$q$-commuting} i.e., $T_1T_2=qT_2T_1$ for some unimodular complex number \(q\), then there is a $q$-commuting isometric lift $(V_1,V_2)$ of $(T_1,T_2)$. However, these dilations are far from canonical, a fact that seems to rule out a coherent model theory. The methods in the works cited above are based on arguments leading to an existential proof. 
	
The main contributions of this paper are as follows:
\begin{enumerate}
\item We give two constructive proofs of the lifting theorem by Keshari and Mallick with an emphasis on the function-theoretic interpretation. See the forward directions of Theorem \ref{T:SDil} and Theorem \ref{T:Dmod} for the explicit constructive proofs.
\item We show by a concrete example (see Example \ref{nonisolifts}) that a $q$-commuting contractive pair may have two minimal $q$-commuting isometric lifts which are not unitarily equivalent.
\item We describe \textit{all} $q$-commuting isometric lifts of a given $q$-commuting contractive pair; See the converse directions of Theorem \ref{T:SDil} and Theorem \ref{T:Dmod} for the descriptions in two different models.
\item In what follows, for a contraction operator $T$ acting on a Hilbert space $\cH$, we shall denote by
$$
D_T=(I_\cH-T^*T)^{\frac{1}{2}}\quad\mbox{and}\quad
\cD_T=\overline{\operatorname{Ran}D_T}
$$the \textit{defect operator} and the \textit{defect space} for $T$, respectively. The explicit functional model of the isometric lifts makes it possible to develop a functional model for pairs of \( q \)-commuting contractions. See the results in Section \ref{S:FunctionalModel}. The functional model has three key components: the Sz.-Nagy--Foias \textit{characteristic function} 
$$
\Theta_T(z)=-T+zD_{T^*}(I-zT^*)^{-1}D_T|_{\cD_T}:\cD_T\to\cD_{T^*}
$$for the product contraction $T=T_1T_2=qT_2T_1$, a $q$-commuting unitary pair $(W_1,W_2)$ canonically constructed in Subsection \ref{SS:CanonUniPair} from a $q$-commuting contractive pair, and a novel notion of fundamental operators for $q$-commuting contractive pairs introduced in Subsection \ref{SS:CharcTriple}. These three components constitute what is called a characteristic triple for a $q$-commuting contractive pair. It is shown in Theorem \ref{T:CharcUniInv} that the characteristic triple serves as a complete unitary invariant for $q$-commuting contractive pairs whose product is completely non-unitary (\textit{cnu}, for short).
\item We then introduce a notion of admissible triple $\Xi=((G_1,G_2),(W_1,W_2),\Theta)$ consisting of an operator pair $(G_1,G_2)$, a $q$-commuting unitary pair $(W_1,W_2)$ and a contractive analytic function $\Theta$ so that the characteristic triple for the model $q$-commuting contractive pair $(T_{1,\Xi},T_{2,\Xi})$ corresponding to $\Xi$ coincides with the admissible triple $\Xi$ in an appropriate sense. In this way, we arrive at an exact analogue of Sz.-Nagy--Foias model theory for contractive operators in the case of $q$-commuting contractive pairs.
\end{enumerate}
There has been considerable interest in \(q\)-commuting operators from various perspectives within operator theory; see, for example, \cite{BPS}, \cite{GS}, \cite{Tomar}. For further developments and generalizations to \(Q\)-commuting contractions, where \(Q\) is a unitary operator, we refer the reader to \cite{BB}, \cite{MS}, and the references therein. 
	
	
	\section{Preliminaries}
	We begin with the preliminaries. 
	Let \( (T_1, T_2) \) be a pair of \( q \)-commuting contractions on a Hilbert space \( \cH \), and let \( T = T_1 T_2 = q T_1 T_2 \). Then we have the following relations among the defect operators:
	
	\begin{align} \label{Deft1}
		D_T^2 = I - T^* T &= I - T_2^* T_2 + T_2^* T_2 - T_2^* T_1^* T_1 T_2 = D_{T_2}^2 + T_2^* D_{T_1}^2 T_2 
		\\ \label{Deft2}&= I - T_1^* T_1 + T_1^* T_1 - T_1^* T_2^* T_2 T_1 = D_{T_1}^2 + T_1^* D_{T_2}^2 T_1.
	\end{align}
	
Then the map \( \Lambda: \mathcal{D}_T \to \mathcal{D}_{T_1} \oplus \mathcal{D}_{T_2} \), defined by
	\begin{align}\label{Lambda}
	\Lambda D_T h = D_{T_1} T_2 h \oplus  D_{T_2} h \quad \text{for every } h \in \cH, 
		\end{align}
	is an isometry as
	
	\begin{align*}
		\| \Lambda D_T h \|^2 
		= \| D_{T_1} T_2 h \|^2 + \| D_{T_2} h \|^2 
		&= \| T_2 h \|^2 - \| T_1 T_2 h \|^2 + \| h \|^2 - \| T_2 h \|^2 \\
		&= \| h \|^2 - \| T h \|^2 = \| D_T h \|^2.
	\end{align*}
	
	Moreover, the map \( U: \operatorname{Ran}\Lambda \to \mathcal{D}_{T_1} \oplus \cD_{T_2} \), defined by
	\begin{align}\label{U}
	U(D_{T_1} T_2 h \oplus  D_{T_2} h) =  D_{T_1} h \oplus  D_{T_2} T_1 h \quad \text{for } h\in \cH, 
	\end{align}
	is also an isometry as can be checked by a similar computation. If required, by adding a direct copy of $\cH$ to \( \cD_{T_1} \oplus \cD_{T_2} \), we can extend \( U \) to a unitary operator on \( \cD_{T_1} \oplus \cD_{T_2} \oplus\cH\), still denoted by \( U \). The isometry and unitary operators defined above will play a pivotal role in what follows. We give them a name for reference.
	
	\begin{definition}\label{D:SpecialAndo}
		Given a pair of \(q\)-commuting contractions $(T_1,T_2)$ on a Hilbert space $\cH$, the tuple $(\cF;\Lambda,P,U)$, as constructed above, is called a \textit{special And\^o tuple} for $(T_1,T_2)$, where $\cF=\cD_{T_1}\oplus\cD_{T_2}\oplus\cE$ (with the understanding that $\cE$ may possible be trivial), $\Lambda:\cD_{T}\to\cF$ is the isometry as in \eqref{Lambda}, $U$ acting on $\cF$ is the unitary as defined in \eqref{U}, and $P$ is the orthogonal projection of $\mathcal{F}$ onto the first component, i.e., $\mathcal{D}_{T_1}\oplus\{0\}\oplus\{0\}$.
	\end{definition}
	
	
	\begin{remark} If \((T_1, T_2)\) is a pair of \(q\)-commuting operators then so is \((T_1^*, T_2^*)\). The special And\^o tuple for $(T_1^*,T_2^*)$ will be denoted by \((\mathcal{F}_*; \Lambda_*, P_*, U_*)\), where 
$$\mathcal{F}_*=\mathcal{D}_{T_1^*} \oplus \mathcal{D}_{T_2^*}\quad\mbox{or}\quad
 \cF_*=\mathcal{D}_{T_1^*} \oplus \mathcal{D}_{T_2^*} \oplus \cH,
 $$ \(\Lambda_*: \mathcal{D}_{T^*} \to \cF_*\) is an isometry defined as:
		\[
		\Lambda_* D_{T^*} h = D_{T_1^*} T_2^* h \oplus  D_{T_2^*} h \oplus 0 \quad \mbox{for all } h \in \mathcal{H},
		\]
		\(P_*\) is the orthogonal projection of $\cF_*$ onto the first component \(\mathcal{D}_{T_1^*}\oplus\{0\}\oplus\{0\}\), and with a slight abuse of notation, \(U_*\) is the unitary extension on $\cF_* $ of the map \(U_*: \operatorname{Ran}\Lambda_* \to \cF_*\), explicitly defined as:
		\[
		U_*(D_{T_1^*} T_2^* h \oplus  D_{T_2^*} h\oplus 0) =  D_{T_1^*} h \oplus  D_{T_2^*} T_1^* h\oplus 0 \quad \mbox{for all }h \in \mathcal{H}.
		\]

	\end{remark}
For Hilbert spaces $\cH$ and $\cK$, we shall use the direct sum notations
	$$
\cH\oplus\cK \quad\mbox{and}\quad
\begin{bmatrix}
\cH \\
\cK
\end{bmatrix}	
	$$interchangeably; similar convention is used for an element in the direct sum, i.e., for vectors $h\in\cH$ and $k\in\cK$, the notations $h\oplus k$ and $\sbm{h \\ k}$ mean the same vector in the direct sum of $\cH$ and $\cK$.
	
	We note some properties of special And\^o tuples below which are crucial for later developments. 
	\begin{lemma}
		Let $(T_1,T_2)$ be a $q$-commuting pair of contractions on $\cH$, and $(\cF;\Lambda,P,U)$ and $(\cF_*;\Lambda_*,P_*,U_*)$ be the special And\^o tuples for $(T_1,T_2)$ and $(T_1^*,T_2^*)$, respectively. Then we have 
		
		\begin{align}\label{Prop1} 
			\begin{cases} 
				&T_1^*T_1+D_T\Lambda^*U^*PU\Lambda D_T=I_{\cH}, \quad T_2^*T_2 + D_T\Lambda^*P^\perp \Lambda D_T=I_{\cH}\\ 
				&PU\Lambda D_TT_2+P^\perp \Lambda D_T=\Lambda D_T=U^*(P^\perp \Lambda D_TT_1+PU\Lambda D_T); 
			\end{cases}
		\end{align}
		
		and
		\begin{align}\label{Prop2} 
			\begin{cases} 
				&\Lambda_* D_{T^*}T_1^*=P_*^\perp U_*\Lambda_* D_{T^*}+P_* U_* \Lambda_* D_{T^*}T^* \\ 
				&\Lambda_* D_{T^*}T_2^*=U_*^*P_*\Lambda_* D_{T^*}+q U_*^*P_*^\perp \Lambda_* D_{T^*}T^*. 
			\end{cases} 
		\end{align} 
	\end{lemma}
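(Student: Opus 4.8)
The plan is to verify every identity by direct computation from the defining formulas for $\Lambda, U, P$ and their starred analogues, evaluating each side on a generic vector $D_Th$ (resp. $D_{T^*}h$) and using only the structural facts $T=T_1T_2$, $T^*=T_2^*T_1^*$ together with the $q$-commutation relations $T_1T_2=qT_2T_1$ and $T_1^*T_2^*=qT_2^*T_1^*$. Throughout I write $\Lambda D_Th=D_{T_1}T_2h\oplus D_{T_2}h\oplus 0$ and recall that $U$ acts on $\operatorname{Ran}\Lambda$ by $U(D_{T_1}T_2x\oplus D_{T_2}x\oplus 0)=D_{T_1}x\oplus D_{T_2}T_1x\oplus 0$, with $P$ the projection onto the first summand; the starred objects obey the same formulas with each $T_i$ replaced by $T_i^*$.

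For the first two identities in \eqref{Prop1} I would pass to quadratic forms: since $P=P^*=P^2$ and $D_T=D_T^*$, we have $\la D_T\Lambda^*U^*PU\Lambda D_Th,h\ra=\|PU\Lambda D_Th\|^2$ and $\la D_T\Lambda^*P^\perp\Lambda D_Th,h\ra=\|P^\perp\Lambda D_Th\|^2$. A one-line substitution gives $U\Lambda D_Th=D_{T_1}h\oplus D_{T_2}T_1h\oplus 0$, whence $\|PU\Lambda D_Th\|=\|D_{T_1}h\|$ and $\|P^\perp\Lambda D_Th\|=\|D_{T_2}h\|$; the two forms therefore equal $\la(I-T_1^*T_1)h,h\ra$ and $\la(I-T_2^*T_2)h,h\ra$. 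As all the operators involved are self-adjoint, equality of quadratic forms yields the stated operator identities. For the chain of equalities in \eqref{Prop1} I would chase vectors: writing $\Lambda D_TT_2h=\Lambda(D_T(T_2h))$ and applying the formula for $U$ with $x=T_2h$ shows that $PU\Lambda D_TT_2h$ has single nonzero component $D_{T_1}T_2h$, which added to $P^\perp\Lambda D_Th$ (nonzero component $D_{T_2}h$) reproduces $\Lambda D_Th$; the rightmost equality follows symmetrically after multiplying through by $U$.

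The identities in \eqref{Prop2} are the dual computation for $(T_1^*,T_2^*)$, and here I would multiply each identity on the left by the unitary $U_*$ to avoid having to invert it. For the first, $U_*\Lambda_* D_{T^*}T_1^*h=D_{T_1^*}T_2^*T_1^*h\oplus D_{T_2^*}T_1^*h\oplus 0$; since $T^*=T_2^*T_1^*$ the first component is exactly $D_{T_1^*}T^*h$, which matches $P_*U_*\Lambda_* D_{T^*}T^*h$, while the second component matches $P_*^\perp U_*\Lambda_* D_{T^*}h$. For the second, $U_*\Lambda_* D_{T^*}T_2^*h=D_{T_1^*}T_2^*h\oplus D_{T_2^*}T_1^*T_2^*h\oplus 0$, and now $T_1^*T_2^*=qT_2^*T_1^*=qT^*$ forces the middle component to be $qD_{T_2^*}T^*h$; this is precisely $P_*\Lambda_* D_{T^*}h+qP_*^\perp\Lambda_* D_{T^*}T^*h$. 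The asymmetry—$T^*=T_2^*T_1^*$ entering directly in the first identity but through $T_1^*T_2^*=qT^*$ in the second—is exactly what produces the stray factor $q$ in \eqref{Prop2}.

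All four displays reduce to substitution, so the only real care needed, and the point I would treat as the main obstacle, is the bookkeeping: each time $U$, $U_*$, or $U_*^*$ is applied I must first rewrite its argument in the canonical range form—$D_{T_1}T_2x\oplus D_{T_2}x\oplus 0$ for $U$, and $D_{T_1^*}x\oplus D_{T_2^*}T_1^*x\oplus 0$ for $U_*^*$—so that the defining formula, or its inverse, legitimately applies, and I must track at each step whether a product collapses to $T$ or $T^*$ or instead yields a factor $q$. Once the arguments are placed in these forms, and $|q|=1$ is invoked exactly where the isometry of $U$ and $U_*$ was established, the verification closes.
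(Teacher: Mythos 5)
Your proposal is correct and follows essentially the same route as the paper: the first two identities of \eqref{Prop1} via positive (quadratic/sesquilinear) forms that reduce to $\|D_{T_1}h\|^2$ and $\|D_{T_2}h\|^2$, and the remaining identities by direct vector chases using the defining formulas for $\Lambda$, $U$, $\Lambda_*$, $U_*$ after putting each argument of $U$ or $U_*$ into canonical range form. One notational slip worth fixing: in your first \eqref{Prop2} computation the displayed vector $D_{T_1^*}T_2^*T_1^*h\oplus D_{T_2^*}T_1^*h\oplus 0$ is $\Lambda_*D_{T^*}T_1^*h$ itself, not $U_*\Lambda_*D_{T^*}T_1^*h$ (the ``multiply by $U_*$'' device is needed only for the second identity, where $U_*^*$ appears); the component matching you then carry out is exactly the verification of the first identity as stated, so the argument is unaffected.
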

	
	\begin{proof}
		The first equality in \eqref{Prop1} follows from the simple inner product computation for every \( h,h' \in \mathcal{H} \):

		\begin{align*}\langle D_T\Lambda^*U^*PU\Lambda D_Th,h'\rangle=\langle PU \Lambda D_T h, PU \Lambda D_T h' \rangle = \langle D_{T_1} h \oplus 0, D_{T_1} h' \oplus 0 \rangle = \langle D_{T_1}^2 h, h' \rangle.\end{align*}
		The second equality in \eqref{Prop1} follows in a similar manner. For the third set of equalities in \eqref{Prop1}, we note that for every \( h \in \mathcal{H} \),
		
		\[
		\begin{aligned}
			PU \Lambda D_T T_2 h + P^\perp \Lambda D_T h &= PU \begin{bmatrix} D_{T_1} T_2 \\ D_{T_2} \end{bmatrix} T_2 h + P^\perp \begin{bmatrix} D_{T_1} T_2 \\ D_{T_2} \end{bmatrix} h \\
			& = P \begin{bmatrix} D_{T_1} \\ D_{T_2} T_1 \end{bmatrix} T_2 h + \begin{bmatrix} 0 \\ D_{T_2} \end{bmatrix} h \\
			& = \begin{bmatrix} D_{T_1} T_2 \\ D_{T_2} \end{bmatrix} h = \Lambda D_T h.
		\end{aligned}
		\]
		The other equality \( \Lambda D_T = U^* \left( P^\perp \Lambda D_T T_1 + PU \Lambda D_T \right) \) follows from a similar computation.

		
		For the set of equations in \eqref{Prop2}, we see that for $h\in\cH$, 
		\begin{eqnarray*} 
			&&(P_*^\perp U_*\Lambda_* D_{T^*}+P_* U_* \Lambda_* D_{T^*}T^*)h\\
			&&= P_*^\perp U_{*}({D_{T_1^*}}T_2^*h\oplus {D_{T_2^*}}h)+P_* U_{*}({D_{T_1^*}}T_2^*T^*h\oplus {D_{T_2^*}}T^*h)\\ 
			&&=(0\oplus D_{T_2^*}T_1^*h)+(D_{T_1^*}T^*h\oplus 0) \\
			&&=D_{T_1^*}T_2^*T_1^*h\oplus D_{T_2^*}T_1^*h\\
			&&=\Lambda_* D_{T^*}T_1^*h. 
		\end{eqnarray*}
The other equality is as follows:
		\begin{eqnarray*}
			&&(U_*^*P_*\Lambda_* D_{T^*}+qU_*^*P_*^\perp \Lambda_*D_{T^*}T^*)h\\
			&&=U^*_*P_*(D_{T^*_1}T^*_2h \oplus D_{T^*_2}h)+qU^*_*P^{\perp}_* (D_{T^*_1}T^*_2T^*h \oplus D_{T^*_2}T^*h)\\
			&&=U^*_*(D_{T^*_1}T^*_2h \oplus 0)+qU^*_*(0\oplus D_{T^*_2}T^*h)\\
			&&=U^*_*(D_{T^*_1}T^*_2h\oplus D_{T^*_2}T^*_1 T^*_2h) \quad [\mbox{since }qT^*=T_1^*T_2^*]\\
			&&=D_{T^*_1}T^*_2T^*_2h\oplus D_{T^*_2} T^*_2h \\
			&&=\Lambda_* D_{T^*}T^*_2h.
		\end{eqnarray*}
		This completes the proof of the lemma.
	\end{proof}

	We need another preparatory lemma.
	\begin{lemma}\label{lem:Prod}
		Let $(T_1,T_2)$ be a $q$-commuting pair of contractions on a Hilbert space $\cH$ and $T=T_1T_2=qT_2T_1$.
		\begin{enumerate} 
			\item If $T$ is an isometry, then so are both $T_1$ and $T_2$. The statement remains true even when `isometry' is replaced by `unitary'. 
			\item For each $n\geq1$, $T_1T^n=q^nT^nT_1$ and $T_2T^n=\oq^n T^n T_2$. 
		\end{enumerate} 
	\end{lemma}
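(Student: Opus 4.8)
The plan is to handle the two parts by completely different mechanisms: part (1) reduces to the defect identities \eqref{Deft1}--\eqref{Deft2} combined with a positivity argument, while part (2) is a direct induction on $n$ once the base case $n=1$ is checked. Nothing deep is needed beyond the algebra of the $q$-commutation relation and the observation that a sum of positive semidefinite operators can vanish only when each summand vanishes.

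For the isometry statement in part (1), I would start from the two expressions for the defect already recorded in the preliminaries, namely $D_T^2 = D_{T_2}^2 + T_2^* D_{T_1}^2 T_2 = D_{T_1}^2 + T_1^* D_{T_2}^2 T_1$. If $T$ is an isometry then $D_T^2 = I - T^*T = 0$. In the expression $D_{T_1}^2 + T_1^* D_{T_2}^2 T_1$ both summands are positive semidefinite, so their vanishing forces $D_{T_1} = 0$; reading the other expression $D_{T_2}^2 + T_2^* D_{T_1}^2 T_2$ the same way forces $D_{T_2} = 0$. Hence both $T_1$ and $T_2$ are isometries. For the unitary claim I would pass to the adjoint pair: $(T_1^*,T_2^*)$ is again $q$-commuting, and its product is $T_1^* T_2^* = q\,T_2^* T_1^* = q\,T^*$, with $D_{qT^*}^2 = I - T T^* = D_{T^*}^2$. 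When $T$ is unitary, $T^*$ is an isometry, so this defect vanishes, and the isometry case just proved, applied to $(T_1^*,T_2^*)$, yields $D_{T_1^*} = D_{T_2^*} = 0$. Thus $T_1,T_2$ are simultaneously isometries (from $D_T=0$) and co-isometries (from $D_{T_1^*}=D_{T_2^*}=0$), hence unitaries.

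For part (2) I would argue by induction on $n$. The base case is the computation $T_1 T = T_1(qT_2T_1) = q(T_1T_2)T_1 = qTT_1$, and, using $T_2T_1 = \oq\,T_1T_2 = \oq\,T$, the companion identity $T_2 T = (T_2T_1)T_2 = \oq\,TT_2$. The inductive step is then immediate: assuming $T_1 T^n = q^n T^n T_1$ we get $T_1 T^{n+1} = (T_1 T^n)T = q^n T^n(T_1 T) = q^{n+1} T^{n+1} T_1$, and likewise $T_2 T^{n+1} = \oq^{\,n} T^n(T_2 T) = \oq^{\,n+1} T^{n+1} T_2$.

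The genuinely substantive point is the positivity step in part (1): the whole statement hinges on the defect identities displaying $D_T^2$ as a sum of two positive operators, so that the vanishing of $D_T$ propagates to each defect. Everything else is routine. The one spot that requires a little care is the unitary case, where I must correctly identify the product of the adjoint pair as $qT^*$ and verify that its defect equals $D_{T^*}$, so that the isometry argument transfers verbatim to $(T_1^*,T_2^*)$; once that identification is made, the argument closes with no further work.
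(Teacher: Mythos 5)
Your proof is correct and follows essentially the same route as the paper: the isometry claim rests on the same defect decompositions $D_T^2 = D_{T_2}^2 + T_2^*D_{T_1}^2T_2 = D_{T_1}^2 + T_1^*D_{T_2}^2T_1$ (the paper phrases the positivity step as the norm identity $\|D_Th\|^2 = \|D_{T_1}T_2h\|^2 + \|D_{T_2}h\|^2 = \|D_{T_1}h\|^2 + \|D_{T_2}T_1h\|^2$, which is the same argument), the unitary case passes to the $q$-commuting adjoint pair exactly as the paper indicates, and part (2) is the routine induction the paper alludes to. Your explicit verification that the product of $(T_1^*,T_2^*)$ is $qT^*$ with defect $D_{T^*}$ is a useful filling-in of a detail the paper leaves implicit.
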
 
	
	\begin{proof}
		
		From equations \eqref{Deft1} and \eqref{Deft2}, we observe that for every vector \( h \),
		
		\[
		\|D_T h\|^2 = \|D_{T_1} T_2 h\|^2 \oplus \|D_{T_2} h\|^2 = \|D_{T_1} h\|^2 + \|D_{T_2} T_1 h\|^2.
		\]
		
This directly proves that if $T=T_1T_2$ is an isometry, then so are $T_1$ and $T_2$. To see the assertion about `unitary', simply note that if \( (T_1, T_2) \) is \( q \)-commuting, then \( (T_1^*, T_2^*) \) is also \( q \)-commuting. This completes the proof of part (1). 

The proof of part (2) is a routine computation using the $q$-commuting relation $T=T_1T_2=qT_2T_1$ by succession.
	\end{proof}
	
	Note that by taking adjoints in part (2), we obtain the relations 
	\begin{align}
		 T_1^* T^{*n} = q^n T^{*n} T_1^* \quad\mbox{and} \quad T_2^* T^{*n} = \oq^n T^{*n} T_2^*.
\end{align} This will be used as well.
	
	\subsection{\texorpdfstring{Models for $q$-commuting isometries}{Models for q-commuting isometries}}
	
The	following structure theorem, referred to as the Wold decomposition for isometries, attributed to Halmos \cite{Halmos}, von Neumann \cite{von} and Wold \cite{Wold}, is the stepping stone to the structure theory for Hilbert space operators.
	\begin{thm} \label{T:vnw}
		Let $V$ be an isometry on $\cH$. Then $V$ can be identified with 
		$$ 
		\begin{bmatrix}
		M_z&0\\0&W
		\end{bmatrix}: 
		\begin{bmatrix}
		H^2(\cD_{V^*})\\
		 \bigcap_{n\geq 0}V^n\cH
		\end{bmatrix}
		\to 
		\begin{bmatrix}H^2(\cD_{V^*})\\ \bigcap_{n\geq 0}V^n\cH
		\end{bmatrix}, 
		$$where $M_z:H^2(\cD_{V^*})\to H^2(\cD_{V^*})$ is the shift operator $f\mapsto z f(z)$, and $W=V|_{\bigcap_{n\geq 0}V^n\cH}$ is the unitary operator. The identification above is via the unitary similarity $\tau:\cH\to H^2(\cD_{V^*})\oplus \cH_u$ given as 
		$$ 
		\tau h = \begin{bmatrix}
		D_{V^*}(I-zV^*)^{-1}h \\
		\lim_{n \to \infty} V^n V^{*n} h 
		\end{bmatrix}.$$
	\end{thm}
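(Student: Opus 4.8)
The plan is to carry out the classical Wold decomposition argument, with the only extra care being to identify the shift part concretely as a vector-valued Hardy space $H^2(\cD_{V^*})$ via the stated map $\tau$. The starting point is the orthogonal decomposition of $\cH$ into the \emph{unitary part} $\cH_u = \bigcap_{n\geq 0} V^n\cH$ and its orthogonal complement. First I would verify that $\cH_u$ reduces $V$ and that $V|_{\cH_u}$ is unitary: since $V$ is an isometry, $V^n\cH$ is a decreasing chain of closed subspaces, and a vector lies in $\cH_u$ iff it is in the range of $V^n$ for every $n$; one checks directly that $V\cH_u=\cH_u$, so $W:=V|_{\cH_u}$ is a surjective isometry, hence unitary. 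The complementary part $\cH_s:=\cH\ominus\cH_u$ is the \emph{shift part}, and the goal is to show $V|_{\cH_s}$ is unitarily equivalent to the shift $M_z$ on $H^2(\cD_{V^*})$.

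The second step is the key structural identity $\cH_s=\bigoplus_{n\geq 0} V^n\cL$, where $\cL:=\cH\ominus V\cH=\ker V^*$ is the \emph{wandering subspace}. Here I would observe that $\cL\cong\cD_{V^*}$: because $V$ is an isometry, $D_{V^*}^2=I-VV^*$ is exactly the orthogonal projection onto $\ker V^*=\cL$, so $D_{V^*}$ restricts to the identity on $\cL$ and $\cD_{V^*}=\overline{\operatorname{Ran}D_{V^*}}=\cL$. The orthogonality of the summands $V^n\cL$ follows from the isometry property, and their sum together with $\cH_u$ exhausts $\cH$ by a standard telescoping argument on $\cH\ominus V^N\cH=\bigoplus_{n=0}^{N-1}V^n\cL$.

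The third step is to pin down the unitary $\tau$. Defining $\tau$ on $\cH_s$ by sending $h$ to the analytic function $z\mapsto \sum_{n\geq 0} z^n\, P_\cL V^{*n}h$, where $P_\cL=D_{V^*}^2$ is projection onto $\cL\cong\cD_{V^*}$, recovers precisely the generating-function expression $D_{V^*}(I-zV^*)^{-1}h$ in the statement, and on $\cH_u$ one sends $h$ to $\lim_{n\to\infty}V^nV^{*n}h$, the projection onto $\cH_u$. I would then check that $\tau$ intertwines $V$ with $M_z\oplus W$: applying $V$ and then projecting shifts the Taylor coefficients, which is exactly multiplication by $z$ on the Hardy-space side, while on the unitary part $V$ acts as $W$. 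Finally, a Parseval-type computation $\|h\|^2=\sum_{n\geq 0}\|P_\cL V^{*n}h\|^2+\|P_{\cH_u}h\|^2$ shows $\tau$ is isometric, and surjectivity follows from the decomposition in step two.

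The main obstacle, or rather the point demanding the most care, is the third step: verifying that the coordinate map $h\mapsto D_{V^*}(I-zV^*)^{-1}h$ genuinely lands in and exhausts $H^2(\cD_{V^*})$ and is compatible with the abstract Fourier expansion along $\bigoplus_n V^n\cL$. Concretely, one must confirm that the $n$-th Taylor coefficient $D_{V^*}V^{*n}h$ agrees with the component of $h$ in the $n$-th wandering copy $V^n\cL$ under the identification $\cL\cong\cD_{V^*}$, and that convergence of the generating series matches the square-summability guaranteed by Parseval. Everything else is routine once $\cL=\cD_{V^*}$ and the wandering decomposition are in hand.
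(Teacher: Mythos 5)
Your proposal is correct: the wandering-subspace decomposition $\cH=\bigl(\bigoplus_{n\geq 0}V^n\cL\bigr)\oplus\cH_u$ with $\cL=\ker V^*=\cD_{V^*}$ (using that $D_{V^*}=I-VV^*$ is a projection for an isometry), the Parseval identity $\sum_{n}\|D_{V^*}V^{*n}h\|^2=\|h\|^2-\lim_N\|V^{*N}h\|^2$, and the check that $\tau$ intertwines $V$ with $M_z\oplus W$ together constitute the standard and complete proof of the Wold decomposition in the stated coordinate form. The paper itself offers no proof of this theorem, citing it as classical (Halmos, von Neumann, Wold), so there is nothing to compare against; your argument is exactly the expected one.
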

	There have been a substantial amount of work done in order to find a model for tuples of commuting isometries. Perhaps the most prominent models is the one developed by Berger, Coburn, and Lebow in \cite{BCL}. A similar model for $q$-commuting pair of isometries is recently found in \cite{BS-qBCL} by J. Ball and H. Sau. We state the model for $q$-commuting setting. The substitution $q=1$ in the model theorem below yields the model found by Berger, Coburn, and Lebow for the commuting case.
	\begin{thm}\label{T:qBCL}
		Let $V_1$ and $V_2$ be two operators acting on a Hilbert space $\cH$ and $V=V_1V_2$. Then the following are equivalent:
 \begin{enumerate}
 \item  The pair $(V_1,V_2)$ is $q$-commuting.
 
 \item  There exist Hilbert spaces $\cF$ and $\cK_u$, a projection $P$ and a unitary $U$ in $\cB(\cF)$, 
 and a pair $(W_1,W_2)$ of $q$-commuting 
 unitaries in $\cB(\cK_u)$ such that $(V_1,V_2)$ is unitarily equivalent to
\begin{align}  \label{isomod1}
\left(
\begin{bmatrix} (R_q \otimes P^\perp U) + (M_z R_q \otimes P U) & 0  \\  0& W_1 \end{bmatrix},
\begin{bmatrix} (R_\oq \otimes U^*P)+ (R_\oq M_z \otimes U^*P^\perp) & 0\\0&W_2 \end{bmatrix}
\right)
\end{align}
acting on $\begin{bmatrix} H^2\otimes \cF\\\cK_u \end{bmatrix},$ where $R_q$ is the \textit{rotation operator} on $H^2$ defined by $$R_q(f)(z)=f(qz).$$

 \item  There exist Hilbert spaces $\cF_\dag$ and $\cK_{u\dag}$, a projection $P_\dag$ and a unitary $U_\dag$ in 
 $\cB(\cF_\dag)$, 
 and a pair $(W_{1\dag},W_{2\dag})$ of $q$-commuting unitaries in $\cB(\cK_{u\dag})$ such that $(V_1,V_2)$ is unitarily equivalent to the pair
\begin{align}  \label{qiso-model2}
\begin{bmatrix}
R_q \otimes U_\dag^*P_\dag^\perp + M_z R_q \otimes U_\dag^*P_\dag & 0\\ 0 & W_{1\dag} \end{bmatrix}, 
\begin{bmatrix}
R_\oq \otimes P_\dag U_\dag + R_\oq M_z \otimes P_\dag^\perp U_\dag&0\\0&W_{2\dag}
\end{bmatrix}
\end{align}
acting on $\begin{bmatrix} H^2\otimes \cF_\dag\\\cK_{u\dag}\end{bmatrix}$. 
\end{enumerate}
Moreover, in both items (2) and (3), the spaces can be chosen so that $\cF=\cF_\dag=\cD_{V^*}$, $\cK_u=\cK_{u\dag}=\cap_{n\geq0}V^n\cH$, and the unitary identification map can be taken to be
 \begin{align}\label{BCL-tau}
\tau h= 
 \begin{bmatrix}
D_{V^*}(I_\cH-zV^*)^{-1}\\
\lim_n V^nV^{*n} 
\end{bmatrix}h,
 \end{align}where $V=V_1V_2$.
	\end{thm}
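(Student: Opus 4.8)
The plan is to show that statements (2) and (3) encode the same object, that each of them implies (1) by a direct computation, and that the only substantive work lies in the implication (1)$\Rightarrow$(2). The equivalence of (2) and (3) is purely formal: putting $U_\dag=U^*$ and $P_\dag=U^*PU$ (so that $P_\dag^\perp=U^*P^\perp U$) turns the pair \eqref{isomod1} verbatim into the pair \eqref{qiso-model2}, and conversely, so the two items assert the existence of the same model and it suffices to treat (2). For (2)$\Rightarrow$(1) (hence also (3)$\Rightarrow$(1)) I would verify directly that the displayed model is a $q$-commuting pair. The only identity needed is the rotation--shift relation $R_qM_z=qM_zR_q$ on $H^2$ (equivalently $M_zR_q=\oq R_qM_z$), which I would record first; with it a short computation shows that the shift-part blocks of \eqref{isomod1} have product $M_z$ in one order and $\oq M_z$ in the other, while the unitary blocks $(W_1,W_2)$ are $q$-commuting by hypothesis. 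Since a direct sum of $q$-commuting pairs is again $q$-commuting, (1) follows.

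For the implication (1)$\Rightarrow$(2), the first step is the Wold decomposition. As $V_1,V_2$ are isometries, so is $V=V_1V_2$, and Theorem \ref{T:vnw} identifies $\cH$ with $H^2(\cD_{V^*})\oplus\cK_u$, where $\cK_u=\bigcap_{n\geq0}V^n\cH$, the operator $V$ becomes $M_z\oplus W$ with $W$ unitary, and the identification is the map $\tau$ of \eqref{BCL-tau}; this already pins down the spaces and the unitary in the ``moreover'' clause. I would then show that $\cK_u$ reduces both $V_1$ and $V_2$: for $x\in\cK_u$ and each $n$ we may write $x=V^ny$, so Lemma \ref{lem:Prod}(2) gives $V_ix=V_iV^ny=q^{\pm n}V^nV_iy\in V^n\cH$, whence $V_i\cK_u\subseteq\cK_u$. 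The restrictions $(W_1,W_2)=(V_1|_{\cK_u},V_2|_{\cK_u})$ are then $q$-commuting isometries with unitary product $W_1W_2=W$, so by Lemma \ref{lem:Prod}(1) they are unitaries, and their surjectivity yields $V_i^*\cK_u\subseteq\cK_u$ as well. Thus $\cK_u$ carries the required $q$-commuting unitary pair, and its orthogonal complement, the pure-shift part $H^2(\cD_{V^*})$, reduces both $V_i$.

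On the shift part set $\cE=\cD_{V^*}$. The relation $V_1M_z=qM_zV_1$ (the case $n=1$ of Lemma \ref{lem:Prod}(2)) together with $R_qM_z=qM_zR_q$ shows that $R_\oq V_1$ commutes with $M_z$; since the commutant of $M_z$ on $H^2(\cE)$ consists precisely of the analytic Toeplitz operators, $R_\oq V_1=M_\Phi$ for an inner $\Phi\in H^\infty(\cB(\cE))$, i.e.\ $V_1=R_qM_\Phi$, and likewise $V_2=R_\oq M_\Psi$. The key manoeuvre is an \emph{untwisting}: I would form the pair $(V_1R_\oq,\,R_qV_2)=(M_{\widehat\Phi},M_\Psi)$ with $\widehat\Phi(z)=\Phi(qz)$. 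Using $V_1V_2=M_z$ and $V_2V_1=\oq M_z$ together with the rotation identities, these translate into $\widehat\Phi(z)\Psi(z)=zI=\Psi(z)\widehat\Phi(z)$, so $(M_{\widehat\Phi},M_\Psi)$ is a \emph{commuting} pair of isometries whose product is the pure shift $M_z$. The classical Berger--Coburn--Lebow theorem \cite{BCL} then supplies a projection $P$ and a unitary $U$ on $\cE$ with $\widehat\Phi(z)=(P^\perp+zP)U$ and $\Psi(z)=U^*(P+zP^\perp)$. Substituting $\Phi(z)=\widehat\Phi(\oq z)$ back into $V_1=R_qM_\Phi$ and simplifying via $\oq R_qM_z=M_zR_q$ reproduces exactly the first entry of \eqref{isomod1}, and $V_2=R_\oq M_\Psi$ gives the second; combined with the Wold data this yields (2) with $\cF=\cD_{V^*}$ and the stated $\tau$.

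The main obstacle is precisely this untwisting step and its bookkeeping: one has to recognize that conjugating the shift-part operators by the rotation operators converts the $q$-commuting isometric pair into a genuinely commuting pair to which the classical structure theorem applies, and then track the several instances of $R_qM_z=qM_zR_q$ carefully enough to land on the exact coefficients and powers of $q$ appearing in \eqref{isomod1} rather than on a cosmetically different normalization. A supporting point that must be invoked cleanly is the identification of the commutant of the shift $M_z$ on $H^2(\cE)$ with the analytic Toeplitz operators $M_\Phi$, which is what guarantees that $R_\oq V_1$ and $R_q V_2$ are genuine multiplication operators.
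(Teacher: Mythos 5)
The paper does not actually prove Theorem \ref{T:qBCL}: it is imported verbatim from the cited work of Ball and Sau \cite{BS-qBCL}, so there is no internal proof to compare against. Judged on its own, your argument is correct and is the natural route one would expect the cited source to take. The reduction of (2)$\Leftrightarrow$(3) via $U_\dag=U^*$, $P_\dag=U^*PU$ checks out, as does the direct verification that the model pair in \eqref{isomod1} has products $M_z\otimes I$ and $\oq M_z\otimes I$ on the shift part. For (1)$\Rightarrow$(2), your use of Lemma \ref{lem:Prod}(2) to show that $\cK_u=\bigcap_n V^n\cH$ is invariant (and then, via Lemma \ref{lem:Prod}(1) applied to the unitary product $W$, reducing) for each $V_i$ is exactly what is needed to make the Wold identification $\tau$ of \eqref{BCL-tau} block-diagonalize the pair. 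The untwisting step is the real content and your bookkeeping is right: from $V_1M_z=qM_zV_1$ and $R_qM_z=qM_zR_q$ one gets $R_\oq V_1\in\{M_z\}'$, hence $V_1=M_{\widehat\Phi}R_q$ and $V_2=R_\oq M_\Psi$ on the shift part, and the relations $V_1V_2=M_z$, $V_2V_1=\oq M_z$ translate precisely into $\widehat\Phi\Psi=\Psi\widehat\Phi=zI$, so the classical Berger--Coburn--Lebow theorem applies and returns exactly the coefficients in \eqref{isomod1}. Two small remarks: the theorem as printed says ``two operators'' where it must mean ``two isometries'' (you silently, and correctly, assume this); and you could have quoted the paper's own Proposition \ref{q-commutant} in place of the classical commutant description of $M_z$, since $V_1\in\{M_z\}'_q$ directly gives $V_1=M_\varphi R_q$ --- this is equivalent to your $R_\oq V_1=M_\Phi$ up to the substitution $\varphi=\widehat\Phi$.
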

	Note that the unitary identification map $\tau$ as in \eqref{BCL-tau} is exactly the unitary identification map behind the Wold decomposition for the isometry $V=V_1V_2$ (see Theorem \ref{T:vnw}).
	
The explicit functional model above makes it possible to jointly extend $q$-commuting isometric pairs to $q$-commuting unitary pairs with an additional structure as demonstrated in the result below. This will be used in what follows.
	
	\begin{thm}\label{isoext}
		Every $q$-commuting isometric pair $(X_1, X_2)$ has a $q$-commuting unitary extension $(Y_1, Y_2)$ such that $Y =Y_1Y_2$ is the minimal unitary extension of $X =X_1X_2$.
	\end{thm}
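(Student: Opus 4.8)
The plan is to invoke the explicit $q$-BCL functional model of Theorem \ref{T:qBCL} and then promote the unilateral shift occurring there to a bilateral shift, leaving the algebraic skeleton of the model untouched. First I would apply Theorem \ref{T:qBCL} to the $q$-commuting isometric pair $(X_1,X_2)$, with $X=X_1X_2$: setting $\cF=\cD_{X^*}$ and $\cK_u=\bigcap_{n\geq0}X^n\cH$, this produces a projection $P$ and a unitary $U$ on $\cF$ and a $q$-commuting unitary pair $(W_1,W_2)$ on $\cK_u$ so that, up to unitary equivalence, $(X_1,X_2)$ is the pair \eqref{isomod1} on $\sbm{H^2\otimes\cF\\\cK_u}$, where $M_z$ is the unilateral shift on $H^2$. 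Multiplying out the two pure-part entries and using $R_qR_\oq=I$, $UU^*=I$ and $P+P^\perp=I$ collapses the product to $X=M_z\otimes I_\cF$ on $H^2\otimes\cF$, in agreement with the fact (the ``Moreover'' clause of Theorem \ref{T:qBCL}) that $M_z\otimes I_\cF$ is exactly the shift part of the Wold decomposition of $X$.

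Next I would replace $H^2$ by $L^2=L^2(\bT)$. On $L^2$ the operator $M_z$ becomes the \emph{bilateral} shift, which is unitary, while $R_q$, given by $R_qf(z)=f(qz)$, stays unitary since $|q|=1$. Define $(Y_1,Y_2)$ on $\sbm{L^2\otimes\cF\\\cK_u}$ by the very same formulas \eqref{isomod1} (now with $M_z$ the bilateral shift) and with the same $(W_1,W_2)$ on $\cK_u$. The only identities used to verify $q$-commutativity of the model pair are $R_qM_z=qM_zR_q$, $R_\oq M_z=\oq M_zR_\oq$ and $R_qR_\oq=I$ together with the algebra of $P$ and $U$; all of these hold verbatim on $L^2$. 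Hence the same computation shows that $(Y_1,Y_2)$ is $q$-commuting and that $Y:=Y_1Y_2$ equals $M_z\otimes I_\cF$ on $L^2\otimes\cF$ and $W_1W_2$ on $\cK_u$, an operator that is manifestly unitary. As $(Y_1,Y_2)$ is a $q$-commuting pair whose product $Y$ is unitary, Lemma \ref{lem:Prod}(1) forces both $Y_1$ and $Y_2$ to be unitary.

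It remains to identify $(Y_1,Y_2)$ as an extension with the required minimality. Viewing $H^2\subseteq L^2$ in the usual way, both $M_z$ and $R_q$ leave $H^2$ invariant and there restrict to the corresponding operators on $H^2$; since the defining formulas for $Y_i$ and $X_i$ are identical and $(W_1,W_2)$ is shared, we obtain $Y_i|_{H^2\otimes\cF\,\oplus\,\cK_u}=X_i$, so $(Y_1,Y_2)$ extends $(X_1,X_2)$. On the unitary summand $Y=W=X$ is unchanged, while on the pure summand $Y=M_z\otimes I_\cF$ is the bilateral shift on $L^2\otimes\cF$, which is the minimal unitary extension of the unilateral shift $M_z\otimes I_\cF$ on $H^2\otimes\cF$ because $\overline{\operatorname{span}}\{M_z^n(H^2\otimes\cF):n\in\mathbb{Z}\}=L^2\otimes\cF$. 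Since the minimal unitary extension of a direct sum (isometry $\oplus$ unitary) is the direct sum of the respective minimal unitary extensions, $Y=Y_1Y_2$ is the minimal unitary extension of $X$, as required.

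The step needing the most care is not any single computation but the bookkeeping that the single substitution $H^2\rightsquigarrow L^2$ simultaneously preserves (i) the $q$-commutation identity, (ii) the identity $Y_1Y_2=M_z\otimes I_\cF$, and (iii) the restriction property onto $H^2\otimes\cF$. Each of these rests on the one structural fact that $M_z$ and $R_q$ on $L^2$ both restrict to $H^2$ and obey on $L^2$ exactly the same intertwining relations they obey on $H^2$, so that every model computation transfers without modification; once this is isolated, no genuinely new estimate is required.
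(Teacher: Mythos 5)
Your proposal is correct and follows essentially the same route as the paper: pass to the $q$-BCL model of Theorem \ref{T:qBCL}, replace the unilateral shift on $H^2$ by the bilateral shift on $L^2$ while keeping the same $P$, $U$, $(W_1,W_2)$, and observe that the product becomes the minimal unitary extension of the product isometry. The only cosmetic difference is that you deduce unitarity of $Y_1,Y_2$ via Lemma \ref{lem:Prod}(1) applied to the unitary product, whereas the paper verifies it directly from the model formulas; both are immediate.
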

	\begin{proof}
		
		Without loss of generality, suppose the pair of \(q\)-commuting isometries \((X_1, X_2)\) is given by the model obtained in \eqref{isomod1}. Consider the pair \((Y_1, Y_2)\) defined as
		
		\(
		\left( \begin{bmatrix}
			R_q \otimes P^\perp U + M_{\zeta} R_q \otimes P U & 0 \\
			0 & W_1
		\end{bmatrix}, 
		\quad
		\begin{bmatrix}
			R_{\oq} \otimes U^* P + R_{\oq} M_{\zeta} \otimes U^* P^\perp & 0 \\
			0 & W_2
		\end{bmatrix} \right) \\
		\text{on} 
		\begin{bmatrix}
			L^2 \otimes \cF \\ 
			\cK_u
		\end{bmatrix},
				\) where $M_\zeta:L^2\to L^2$ is the unitary operator $f\mapsto \zeta f(\zeta)$.
		
		It is straightforward to verify that \((Y_1, Y_2)\) forms a pair of \(q\)-commuting unitary operators. By considering the natural embedding of \((H^2 \otimes \cF) \oplus \cK_u\) into \((L^2 \otimes \cF) \oplus \cK_u\) through the map
		
		\[
		\begin{bmatrix}
			z^n \otimes \xi \\
			\eta
		\end{bmatrix}
		\mapsto 
		\begin{bmatrix}
			\zeta^n \otimes \xi \\
			\eta
		\end{bmatrix}
		\quad \text{for} \quad \xi \in \cF, \; \eta \in \cK_u, \; n \geq 0,
		\] we see that with $(Y_1,Y_2)$ as set above,
		$$
		Y=Y_1Y_2=\begin{bmatrix}
		M_\zeta&0\\
		0& W_1W_2
		\end{bmatrix}:\begin{bmatrix}
			L^2 \otimes \cF \\ 
			\cK_u
		\end{bmatrix}\to
		\begin{bmatrix}
			L^2 \otimes \cF \\ 
			\cK_u
		\end{bmatrix}
		$$  is the minimal unitary extension of the isometry 
		$$
		X=X_1X_2=\begin{bmatrix}
		M_z&0\\
		0& W_1W_2
		\end{bmatrix}:
		\begin{bmatrix}
			H^2 \otimes \cF \\ 
			\cK_u
		\end{bmatrix}\to
		\begin{bmatrix}
			H^2 \otimes \cF \\ 
			\cK_u
		\end{bmatrix}.
		$$This was to be proved.
	\end{proof}
	
	
A well-known result in operator theory is that any bounded operator that commutes with the shift operator $M_z$ is of the form $M_\varphi$ for some bounded analytic function $\varphi$. We conclude this section with an analogous result in the $q$-commuting setting. This will be used every now and then in what follows. It is convenient to have a definition first.
	\begin{definition}
		Let $T$ be a bounded operator on $\cH$ and $q$ be a unimodular complex number. The  {\em{ $q$-commutant}} of $T$, denoted by $\{T\}'_q$, is defined by 
		$$
		\{T\}'_q=\{A\in \cB(\cH):AT=qTA\}.
		$$
	\end{definition}

\begin{proposition}\label{q-commutant}
		Let $M_z$ be the shift operator on the vector valued Hardy space $H^2(\cE)$, for some Hilbert space \(\cE\). Then 
		$$
		\{M_z\}'_q=\{M_{\varphi}R_q: \varphi\in H^{\infty}(\mathcal{B}(\cE))\}.
		$$ 
\end{proposition}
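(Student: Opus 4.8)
The plan is to establish the two inclusions $\{M_{\varphi}R_q : \varphi \in H^{\infty}(\cB(\cE))\} \subseteq \{M_z\}'_q$ and $\{M_z\}'_q \subseteq \{M_{\varphi}R_q : \varphi \in H^{\infty}(\cB(\cE))\}$ separately. The forward inclusion is a direct computation: for $\varphi \in H^{\infty}(\cB(\cE))$, one checks that $M_{\varphi}R_q M_z = q M_z M_{\varphi}R_q$ by evaluating both sides on a function $f \in H^2(\cE)$. Indeed, $R_q M_z f = R_q(zf(z)) = qz f(qz) = q M_z R_q f$, since $R_q$ intertwines $M_z$ with $q M_z$; multiplication by $\varphi$ (as a left multiplier) commutes with $M_z$, so composing with $M_{\varphi}$ preserves the relation. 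This gives $A = M_{\varphi}R_q \in \{M_z\}'_q$ and also shows each such $A$ is bounded since $\|M_{\varphi}\| = \|\varphi\|_{\infty}$ and $R_q$ is unitary on $H^2$.

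For the reverse inclusion, suppose $A \in \cB(H^2(\cE))$ satisfies $A M_z = q M_z A$. The idea is to reduce to the classical commutant-of-the-shift theorem by peeling off the known factor $R_q$. Since $R_q$ is unitary with $R_q^* = R_{\oq}$ and $R_q M_z = q M_z R_q$, I would set $B = A R_q^{*} = A R_{\oq}$ and verify that $B$ commutes with $M_z$ in the ordinary sense: from $A M_z = q M_z A$ and $R_{\oq} M_z = \oq M_z R_{\oq}$, one computes $B M_z = A R_{\oq} M_z = \oq A M_z R_{\oq} = \oq q M_z A R_{\oq} = M_z B$, using $\oq q = 1$. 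By the classical theorem on the commutant of the vector-valued shift, $B = M_{\varphi}$ for some $\varphi \in H^{\infty}(\cB(\cE))$, and hence $A = B R_q = M_{\varphi} R_q$, which completes the reverse inclusion.

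The main obstacle, such as it is, lies in the reverse direction: one must invoke the correct vector-valued analogue of the scalar result that every bounded operator commuting with $M_z$ is a multiplier $M_{\varphi}$ with $\varphi \in H^{\infty}(\cB(\cE))$. This is standard but requires the operator-valued $H^{\infty}$ machinery, and care must be taken that $\varphi$ is genuinely a bounded analytic $\cB(\cE)$-valued function (with $\|\varphi\|_{\infty} = \|M_{\varphi}\|$) rather than merely a formal symbol. The key algebraic point throughout is that $R_q$ is a \emph{unitary} on $H^2(\cE)$ intertwining $M_z$ and $q M_z$, so conjugating by $R_q$ converts $q$-commutation into ordinary commutation; once this is isolated, everything reduces to the classical statement. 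I would record the intertwining identity $R_q M_z = q M_z R_q$ explicitly at the outset, since it drives both inclusions.
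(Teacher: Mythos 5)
Your proof is correct, and it takes a genuinely different route from the paper's. You conjugate by the unitary $R_q$ (using $R_q M_z = qM_z R_q$, $R_q^* = R_{\oq}$) to convert the $q$-commutation $AM_z = qM_zA$ into the ordinary commutation $BM_z = M_zB$ for $B = AR_{\oq}$, and then invoke the classical theorem that the commutant of the vector-valued shift on $H^2(\cE)$ is $\{M_\varphi : \varphi \in H^\infty(\cB(\cE))\}$; the algebra $\oq\, q = 1$ checks out and the forward inclusion is the same routine computation. The paper instead proves the statement from scratch: it defines $\varphi(z)e := A(1\otimes e)(z)$, shows $\varphi$ is a $\cB(\cE)$-valued holomorphic function with $Af(z) = \varphi(z)f(qz)$ by a density argument on polynomials, and then establishes the uniform bound $\|\varphi(z)\| \le \|A\|$ via a Szeg\H{o}-kernel estimate --- in effect re-deriving the classical commutant theorem with the $q$-twist built in. Your reduction is shorter and cleaner, at the cost of taking the operator-valued commutant theorem as a black box (which the paper itself acknowledges as well known in the sentence immediately preceding the proposition, so this is a legitimate dependency); the paper's argument is self-contained and, as a by-product, exhibits the explicit formula $Af(z) = \varphi(z)f(qz)$ and the norm bound directly.
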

		\begin{proof}
It is easy to see that operators of the form $M_\varphi R_q$ $q$-commutes with $M_z$ for any $\varphi$ in $H^{\infty}(\mathcal{B}(\cE))$.
			
To prove the converse, we shall use the short-hand notation 
$$
\widetilde f(z):= f(qz) \quad\mbox{for }f\in H^2(\cE).
$$Suppose $A$ is a bounded operator on $H^2(\cE)$ so that $AM_z=qM_zA$. It readily implies $AM_{f(z)}=M_{f(qz)}A$ for all polynomials $f\in \mathbb{C}[z]$. Let us denote $\widetilde{f}(z)=f(qz)$. Given any $e\in \cE$, define $\varphi$ on $\mathbb{D}$ by 
\[
\varphi(z)(e):=A(1\otimes e)(z).
\]Let $k$ denote the reproducing kernel (also known as the Sz\"ego kernel) 
$$
k(z,w)=\frac{1}{1-\overline{w}z}$$
for the classical Hardy space $H^2$. It is well-known that for the vector-valued Hardy space $H^2(\cE)$, the reproducing kernel is given by $k\otimes I_\cE$ and that it has the reproducing property
\begin{align}\label{Reproduce}
\langle f, k_z\otimes e\rangle_{H^2(\cE)} = \langle f(z),e\rangle_\cE
\end{align}for every $f\in H^2(\cE)$ and $e\in\cE$. Let us compute for every $e_1,e_2\in\cE$,
$$
\langle \varphi(z)e_1,e_2\rangle_\cE= \langle A(1\otimes e_1)(z),e_2\rangle_\cE= \langle A(1\otimes e_1),k_z\otimes e_2\rangle_{H^2(\cE)}.
$$From this realization, it follows that $\varphi(z)$ is a bounded operator for every $z\in\bD$ and the bound is given by $\|A\|\|k_z\|$. It also follows that $z\mapsto\varphi(z)$ is a $\mathcal{B}(\cE)$-valued holomorphic map. Indeed, it is enough to show that $z\mapsto\varphi(z)e$ is holomorphic for every $e\in\cE$. This can be seen from the very definition of $\varphi$ since \(A(1\otimes e)\in H^2(\cE)\) and hence is holomorphic. Let us also note that for every polynomial $f\in \mathbb{C}[z]$,
$$A(f(z)e)=AM_f(1\otimes  e)(z)=M_{\widetilde{f}}A( 1\otimes e)(z)=\widetilde{f}(z)\varphi(z)(e)=\varphi(z)(\widetilde{f}(z)e).
$$
Given any $f\in H^2(\cE)$, there exists a sequence of \(\cE\)-valued polynomials $(f_n)$ such that $f_n\to f$ in $H^2(\cE)$ as polynomials are dense in $H^2$. Since $A$ is a  bounded linear operator, it follows that $Af_n$ converges to $Af$ in $H^2(\cE)$.  We know that the norm convergence in $H^2(\cE)$ implies the uniform convergence on compact subsets of $\bD$ and hence, in particular, we have $f_n(z)\to f(z)$ and $\widetilde{f_n}(z)=f_n(qz)\to \widetilde{f}(z)=f(qz)$ in \(\cE\) for all \(z\in \mathbb D\). Then $\varphi(z)(\widetilde{f_n}(z)) \to \varphi (z)( \widetilde{f}(z))$ for all $z\in \mathbb{D}$ as the operator $\varphi(z$) is bounded for all $z\in \mathbb{D}$. Also since $ \varphi(z)(\widetilde{f_n}(z))=A(f_n)(z)\to Af(z)$ in $\cE$ for all \(z\in \mathbb{D}\), we conclude that
$Af(z)=\varphi (z) \widetilde{f}(z)=\varphi(z)f(qz)$ for all $z\in \mathbb{D}$. 

Thus what remains to show is that $\|\varphi(z)\|$ is bounded by a constant independent of $z$ in $\bD$. To this end, we apply the conclusion of the preceding paragraph to the choice $f(\cdot)=\widetilde k_{qw}(\cdot)\otimes e$, where $e\in\cE$ i.e.,
$$
A(k_{qw}\otimes e)(z)= \varphi(z)(\widetilde k_{qw}\otimes e)(z).
$$In particular, by substituting  $w=z$ in the above identity of vectors in $\cE$, we obtain the following: 
$$
A( k_{qz}\otimes e)(z)= \varphi(z)(\widetilde k_{qz}\otimes e)(z).
$$We shall use this in the following computation. Since $\widetilde k_{qz}(z)$ is real number, we see that for every $z\in\bD$, and $e_1,e_2\in\cE$,
\begin{align*}
\widetilde k_{qz}(z) |\langle \varphi(z)e_1,e_2 \rangle_{\cE}| &=
|\langle \varphi(z)(\widetilde k_{qz}\otimes e_1)(z),e_2 \rangle_{\cE}|\\
&=|\langle A(k_{qz}\otimes e_1),k_z\otimes e_2 \rangle_{H^2(\cE)}|\le \|A\|\|e_1\|\|e_2\|\|k_z\|^2.
\end{align*}In the computation above, we used the reproducing property \eqref{Reproduce} for the functions $\widetilde k_{qz}$ and the fact that $\|k_{qz}\|_{H^2}=\|k_z\|_{H^2}$. Finally, since $\widetilde k_{qz}(z)=\|k_{qz}\|^2=\|k_z\|^2$, it follows from the computation above that $\|\varphi(z)\|\leq \|A\|$ for every $z\in\bD$. Thus $M_\varphi$ is a bounded operator on $H^2(\cE)$ and therefore we can write $A=M_\varphi R_q$.
\end{proof}

Finally, we shall need the following well known lemma every now and then. We refer the readers to \cite[Lemma 2.13]{BhSrPal} (also see \cite[Lemma 2.3.1]{BS-CUP}) for a proof.
\begin{lemma}\label{L:Zero}
The only bounded linear operator intertwining a unitary
operator with a shift operator is the zero operator, i.e., if $\cK$ and $\cK'$ are Hilbert spaces, $U\in\cB(\cK)$ is a unitary operator and $S\in\cB(\cK')$ is a shift operator, i.e., $S$ is an isometry and $S^{*n}\to 0$ strongly, and
$G: \cK\to\cK'$ is a bounded linear operator such that $GU=SG$, then $G=0$.
\end{lemma}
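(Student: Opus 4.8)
The plan is to iterate the intertwining relation and play off the two structural hypotheses against each other: that $U$ is \emph{invertible} (being unitary) and that $S$ is a \emph{shift} (so that $S^{*n}\to 0$ strongly). First I would use unitarity of $U$ to write $U^*=U^{-1}$, so that the relation $GU=SG$ rearranges to $G=SGU^*$. Feeding this identity into itself $n$ times gives $G=S^nGU^{*n}$ for every $n\ge 1$. This single formula already contains everything, since it exhibits $Gx$ as lying in $\operatorname{Ran}(S^n)$ for all $n$: indeed $Gx=S^n\big(GU^{*n}x\big)$.

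Next I would fix $x\in\cK$, put $y=Gx$, and show $y=0$. Since $y\in\operatorname{Ran}(S^n)$ for every $n$, I can write $y=S^n v$ for some $v$, and because $S^n$ is an isometry (so $S^{*n}S^n=I$ and $\|S^n v\|=\|v\|$) we get $\|S^{*n}y\|=\|S^{*n}S^n v\|=\|v\|=\|y\|$. Thus $\|S^{*n}y\|=\|y\|$ for \emph{all} $n$. On the other hand, the shift hypothesis $S^{*n}\to 0$ strongly, applied to the fixed vector $y$, forces $\|S^{*n}y\|\to 0$. Comparing the two gives $\|y\|=0$, i.e. $Gx=0$; as $x$ was arbitrary, $G=0$. (Equivalently, one can apply $S^{*n}$ to $G=S^nGU^{*n}$, use $S^{*n}S^n=I$ to deduce $S^{*n}G=GU^{*n}$ and hence $S^nS^{*n}G=G$, and then estimate $\|Gx\|=\|S^nS^{*n}Gx\|=\|S^{*n}Gx\|\to 0$.)

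There is no serious obstacle here: the entire content is the iteration $G=S^nGU^{*n}$ together with the standard observation that $S^{*n}\to 0$ strongly is exactly what makes $\bigcap_{n\ge 0}\operatorname{Ran}(S^n)=\{0\}$. The one point that deserves care is that the strong convergence $S^{*n}\to 0$ must be invoked at the \emph{fixed} vector $y=Gx$ rather than at a vector depending on $n$; the argument above is arranged precisely so that this is the case, which is why moving the $U^{*n}$ to the right (using invertibility of $U$) and the $S^n$ to the left is the correct bookkeeping.
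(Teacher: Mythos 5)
Your argument is correct, and it is essentially the standard proof: iterate $GU=SG$ to get $G=S^nGU^{*n}$, so $\operatorname{Ran}G\subseteq\bigcap_{n\ge0}\operatorname{Ran}(S^n)$, which is trivial because $S^{*n}\to0$ strongly. The paper does not prove this lemma itself but cites \cite[Lemma 2.13]{BhSrPal}, whose proof proceeds by exactly this iteration, so your route coincides with the referenced one.
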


	\section{\texorpdfstring{A Sch\"affer-type model for isometric lifts}{A Sch\"affer-type model for isometric lifts}}
Given a vector-valued Hardy space $H^2(\cF)$, we shall have use of the `evaluation at zero' operator $\bev_0:H^2(\cF)\to\cF$ defined by
$$
\bev_0: f\mapsto f(0).
$$
	
In this section, we start with a $q$-commuting contractive pair $(T_1,T_2)$ and construct a $q$-commuting isometric lift of $(T_1,T_2)$ in a model form that is much similar to the Sch\"affer isometric lift
$$
\begin{bmatrix} T & 0 \\ \bev_0^* D_T & M_z  \end{bmatrix}:
\begin{bmatrix}
\cH \\
H^2(\cD_T)
\end{bmatrix}\to
\begin{bmatrix}
\cH \\
H^2(\cD_T)
\end{bmatrix}
$$for a contraction $T$ acting on $\cH$ constructed in \cite{Schaffer}. The function model Theorem \ref{T:qBCL} for $q$-commuting isometries makes it possible to also describe all isometric lifts of a given $q$-commuting contractive pair.
	\begin{thm} \label{T:SDil}
		Let \((T_1, T_2)\) be a pair of \(q\)-commuting contractions acting on a Hilbert space \(\mathcal{H}\). Let \((\mathcal{F}; \Lambda, P, U)\) be a special And\^o tuple associated with the pair \((T_1, T_2)\) as in Definition \ref{D:SpecialAndo}. Consider the pair of operators \((V^S_1, V^S_2)\) defined on \(\sbm{\mathcal{H} \\ H^2(\mathcal{F}}\) as follows:
	
		\begin{align}\label{SDil}\left( \begin{bmatrix} T_1 & 0 \\ \bev_0^* P U \Lambda D_T &  qM_{(P^\perp + zP)U} R_q \end{bmatrix}, \begin{bmatrix} T_2 & 0 \\ \oq \bev_0^* U^* P^\perp \Lambda D_T &  \oq R_{\oq} M_{U^*(P + z P^\perp)} \end{bmatrix}\right).
		\end{align}
Then $(V^S_1,V^S_2)$ is a $q$-commuting isometric dilation of $(T_1,T_2)$.

Conversely, if $(V_1,V_2)$ is a minimal $q$-commuting isometric lift of $(T_1,T_2)$ acting on $\cK$ containing $\cH$ as a subspace, then there is an And\^o tuple $(\cF,\Lambda,P,U)$ of $(T_1,T_2)$ such that $(V_1,V_2)$ is unitarily equivalent to a pair as in \eqref{SDil}.
Moreover, the operators $P,U$ and $\Lambda$ satisfy the relations \eqref{Prop1}.
	\end{thm}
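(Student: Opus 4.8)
The plan is to verify directly, writing $V_i^S=\sbm{T_i & 0\\ C_i & D_i}$ on $\sbm{\cH\\ H^2(\cF)}$, that each $V_i^S$ is an isometry, that the pair is $q$-commuting, and that it lifts $(T_1,T_2)$. The lower-triangular block shape makes the lift property immediate: $P_\cH V_i^S|_\cH=T_i$ and $V_i^{S*}\cH\subseteq\cH$ with $V_i^{S*}|_\cH=T_i^*$. For the isometry of $V_i^S$ I would compute $V_i^{S*}V_i^S$ blockwise and check three identities. The diagonal identity $D_i^*D_i=I$ holds because the symbols $(P^\perp+zP)U$ and $U^*(P+zP^\perp)$ are inner (on the boundary their values are unitary, using $P+P^\perp=I$, $PP^\perp=0$ and unitarity of $U$), so the multipliers are isometric, while $R_q,R_\oq$ are unitary and $|q|=1$. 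The off-diagonal identity $C_i^*D_i=0$ collapses, via $\bev_0 M_\varphi R_q=\varphi(0)\bev_0$ and $PP^\perp=0$, to evaluating the symbol at $0$. The corner identity $T_i^*T_i+C_i^*C_i=I$ is exactly the first/second line of \eqref{Prop1}, once one uses $\bev_0\bev_0^*=I_\cF$ to rewrite $C_1^*C_1=D_T\Lambda^*U^*PU\Lambda D_T$ and $C_2^*C_2=D_T\Lambda^*P^\perp\Lambda D_T$. For $q$-commutativity, the $(1,1)$ entry is the hypothesis $T_1T_2=qT_2T_1$; the $(2,2)$ entry reduces to $D_1D_2=M_z=qD_2D_1$, which I would get from $R_qR_\oq=I$, the polynomial identities $(P^\perp+zP)(P+zP^\perp)=zI=(P+zP^\perp)(P^\perp+zP)$, and $R_\oq M_zR_q=\oq M_z$; and the $(2,1)$ entry $C_1T_2+D_1C_2=q(C_2T_1+D_2C_1)$ is where the third line of \eqref{Prop1} enters, both sides collapsing to $\bev_0^*\Lambda D_T$ after the reductions $R_q\bev_0^*=\bev_0^*=R_\oq\bev_0^*$ and $M_{(P^\perp+zP)U}\bev_0^*=\bev_0^*P^\perp U+M_z\bev_0^*PU$.

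\medskip\noindent\textbf{Converse: reduction to the pure case.} Let $(V_1,V_2)$ be a minimal $q$-commuting isometric lift on $\cK\supseteq\cH$. Since $\cH$ is co-invariant, $\cL:=\cK\ominus\cH$ is invariant for both $V_i$; I set $D_i:=V_i|_\cL$, $C_i:=P_\cL V_i|_\cH$, so that $V_i=\sbm{T_i & 0\\ C_i & D_i}$ and $(D_1,D_2)$ is again a $q$-commuting isometric pair, with isometry of $V_i$ giving $C_i^*C_i=D_{T_i}^2$ and $C_i^*D_i=0$. Applying Theorem \ref{T:qBCL} to $(D_1,D_2)$ splits $\cL=\cL_s\oplus\cL_u$, where $\cL_u=\bigcap_n(D_1D_2)^n\cL$ reduces both $D_i$ to $q$-commuting unitaries. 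The first key step is that $C_i^*D_i=0$ forces $\operatorname{Ran}C_i\perp\operatorname{Ran}D_i\supseteq D_i\cL_u=\cL_u$, hence $P_{\cL_u}C_i=0$; since on $\cL$ one has $P_{\cL_u}V_i=W_iP_{\cL_u}$, an induction gives $P_{\cL_u}V_1^aV_2^b\cH=0$ for all $a,b\ge0$, and minimality $\cK=\bigvee_{a,b\ge0}V_1^aV_2^b\cH$ then forces $\cL_u=0$. Thus $(D_1,D_2)$ is a \emph{pure} $q$-commuting isometric pair.

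\medskip\noindent\textbf{Converse: reading off the Andô tuple.} Now Theorem \ref{T:qBCL} identifies $\cL\cong H^2(\cF)$ with $\cF=\cD_{(D_1D_2)^*}$ and presents $(D_1,D_2)$ in model form, from which I read off the projection $P$ and unitary $U$ (the scalars $q,\oq$ of \eqref{SDil} being absorbed by the freedom of unimodular scaling of each $D_i$). A direct computation of wandering subspaces gives $\ker D_1^*=\bev_0^*(P\cF)$ and $\ker D_2^*=\bev_0^*(U^*P^\perp\cF)$; since $\operatorname{Ran}C_i\subseteq\ker D_i^*$, both $C_i$ have constant-function range and may be written $C_1=\bev_0^*\gamma_1$, $C_2=\oq\,\bev_0^*\gamma_2$ with $\gamma_1:\cH\to P\cF$ and $\gamma_2:\cH\to U^*P^\perp\cF$. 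Then $C_i^*C_i=D_{T_i}^2$ becomes $\gamma_i^*\gamma_i=D_{T_i}^2$, and the relation $C_1T_2+D_1C_2=q(C_2T_1+D_2C_1)$, transported to these coordinates, is exactly the intertwining that lets me assemble a single isometry $\Lambda:\cD_T\to\cF$ with $\gamma_1=PU\Lambda D_T$ and $\gamma_2=U^*P^\perp\Lambda D_T$, after which $(V_1,V_2)$ is visibly unitarily equivalent to \eqref{SDil}. The main obstacle I anticipate is precisely this assembly step: showing that the data $(C_1,C_2,D_1,D_2)$ extracted from an \emph{arbitrary} minimal lift genuinely packages into a special Andô tuple in the sense of Definition \ref{D:SpecialAndo} — i.e. that $\cF$ splits as $\cD_{T_1}\oplus\cD_{T_2}\oplus\cE$ with $\Lambda$ of the form \eqref{Lambda} and $U$ acting as in \eqref{U} — which requires carefully decoding the $q$-commutation identity in the model coordinates and matching it against \eqref{Lambda} and \eqref{U}.

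\medskip\noindent\textbf{The relations \eqref{Prop1}.} For the constructed tuple these hold essentially for free: the first two lines of \eqref{Prop1} are the rewritten corner identities $\gamma_i^*\gamma_i=D_{T_i}^2$ (equivalently $T_i^*T_i+C_i^*C_i=I$), and the third line is the rewritten $(2,1)$ $q$-commutation identity. Alternatively, once $(\cF;\Lambda,P,U)$ has been recognized as a special Andô tuple, \eqref{Prop1} follows immediately from the preparatory Lemma establishing \eqref{Prop1}.
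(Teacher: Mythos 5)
Your forward direction is essentially the paper's proof: block-by-block verification of isometry via the first two lines of \eqref{Prop1}, of $q$-commutativity via the third line, with the lift property free from the lower-triangular shape. That part is fine.

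The converse is where you diverge, and there are two problems. First, you have set yourself a stronger target than the theorem asks for. The statement only requires \emph{an} And\^o tuple $(\cF,\Lambda,P,U)$ --- i.e.\ an isometry $\Lambda:\cD_T\to\cF$, a projection $P$ and a unitary $U$ satisfying \eqref{Prop1} --- not a \emph{special} And\^o tuple in the sense of Definition \ref{D:SpecialAndo}. Your announced "assembly step" (showing $\cF$ splits as $\cD_{T_1}\oplus\cD_{T_2}\oplus\cE$ with $\Lambda$ as in \eqref{Lambda} and $U$ as in \eqref{U}) is not what needs proving, and in view of Example \ref{nonisolifts} --- which exhibits minimal lifts that are not unitarily equivalent, one of them with an infinite-dimensional wandering space $\cF$ for the pair $(0,0)$ on $\bC$ --- you should not expect every minimal lift to be encoded by a tuple of that concrete shape. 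Second, even for the correct target, the step where you "assemble a single isometry $\Lambda$ with $\gamma_1=PU\Lambda D_T$ and $\gamma_2=U^*P^\perp\Lambda D_T$" from the two separately-obtained maps $\gamma_1,\gamma_2$ and the $(2,1)$ commutation identity is exactly the point you leave undone, and it is the nontrivial part of your route. The paper avoids it entirely by a cleaner device: form the product $V=V_1V_2=\sbm{T&0\\ C&M_z}$, use $V^*V=I$ to get $M_z^*C=0$ and $C^*C=D_T^2$, hence $C=\bev_0^*\Lambda D_T$ for a \emph{single} isometry $\Lambda:\cD_T\to\cF$, and then recover $C_{11}$ and $C_{21}$ from $C$ via the identities $V_1=\oq V_2^*V$ and $V_2=V_1^*V$ (valid because $V_1,V_2$ are isometries). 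I recommend you replace your $\gamma_1,\gamma_2$ bookkeeping by this argument. Your elimination of the unitary summand $\cL_u$ (via $C_i^*D_i=0$, the reducing property, induction, and minimality) is correct, though the paper gets the same conclusion slightly faster by first noting that the row of the contraction $V_j$ containing the unitary block $W_j$ forces the remaining entry of that row to vanish, and then invoking minimality. Finally, your observation that \eqref{Prop1} for the extracted tuple is just the corner and $(2,1)$ identities read backwards is correct and is in fact more explicit than what the paper writes.
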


	\begin{proof}

		We begin by showing that \( (V^S_1, V^S_2) \) is \( q \)-commuting. To do this, we use the representation \eqref{SDil} to compute
		
		\begin{align*}
			V^S_1 V^S_2 &= \begin{bmatrix}
				T_1 T_2 & 0 \\
				\bev_0^* P U \Lambda D_T T_2 +M_{P^\perp U+ z P U} R_q\bev_0^* U^* P^\perp \Lambda D_T & M_z
			\end{bmatrix}\\
			&= \begin{bmatrix}
				 T_1T_2 & 0 \\
				\bev_0^* (P U \Lambda D_T T_2 + P^\perp \Lambda D_T) & M_z
			\end{bmatrix}
		\end{align*} and 
		\begin{align*} V^S_2 V^S_1 &= \begin{bmatrix}
				T_2 T_1 & 0 \\
				\oq \bev_0^* U^* P^\perp \Lambda D_T T_1 + \oq R_{\oq} M_{U^*(P + z P^\perp)} \bev_0^* P U \Lambda D_T & R_{\oq} M_z R_q
			\end{bmatrix}
			\\&= \begin{bmatrix}
				\oq T_1T_2 & 0 \\
				\oq \bev_0^* U^* (P^\perp \Lambda D_T T_1 + P U \Lambda D_T) & \oq M_z
			\end{bmatrix}.
		\end{align*}

		Therefore, based on the third set of equalities in \eqref{Prop1}, we can say that the pair \( (V^S_1, V^S_2) \) is \( q \)-commuting.

		It remains to show that \(V^S_1 \) and \(V^S_2 \) are isometric operators. We observe that
		
		\begin{align*}
			V^{S*}_1 V^S_1& = \begin{bmatrix}
				T_1^* & D_T \Lambda^* U^* P \bev_0 \\
				0 &\oq  R_{\oq} M_{(P^\perp + zP) U}^* R_q
			\end{bmatrix}
			\begin{bmatrix}
				T_1 & 0 \\
				\bev_0^* P U \Lambda D_T & q M_{(P^\perp + zP) U} R_q
			\end{bmatrix} \\
			&= \begin{bmatrix}
				T_1^* T_1 + D_T \Lambda^* U^* P U \Lambda D_T & 0 \\
				0 & R_{\oq} M_{(P^\perp + zP) U}^* M_{(P^\perp + zP) U} R_q
			\end{bmatrix}
			= \begin{bmatrix}
				I_{\mathcal{H}} & 0 \\
				0 & I_{H^2(\mathcal{F})}
			\end{bmatrix}.
		\end{align*}
		
		The final matrix equality in the above computation is derived by applying the first equation in \eqref{Prop1}. In a similar manner, applying the second equation in \eqref{Prop1} gives \( V_2^S \) an isometry as well.
	
We now prove the converse direction. Since $(V_1,V_2)$ acting on $\cK$ is a lift of $(T_1,T_2)$ acting on $\cH$ and $\cK$ contains $\cH$ as a subspace, we must have
$$
(V_1,V_2)=
\left(
\begin{bmatrix}
T_1&0\\
C_1&D_1
\end{bmatrix},
\begin{bmatrix}
T_2&0\\
C_2&D_2
\end{bmatrix}
\right) \mbox{ acting on } 
\begin{bmatrix}
\cH \\
\cK\ominus\cH
\end{bmatrix}.
$$Since $(V_1,V_2)$ is $q$-commuting isometric pair, $(D_1,D_2)$ is also a $q$-commuting isometric pair. To represent the model in the desired form, we choose to work with the  $q$-commuting pair $(\oq D_1,qD_2)$. By part (2) of Theorem \ref{T:qBCL}, $(\oq D_1,q D_2)$ is jointly unitarily equivalent to
\begin{align*}
\left(
\begin{bmatrix} (R_q \otimes P^\perp U) + (M_z R_q \otimes P U) & 0  \\  0& W_1 \end{bmatrix},
\begin{bmatrix} (R_\oq \otimes U^*P)+ (R_\oq M_z \otimes U^*P^\perp) & 0\\0&W_2 \end{bmatrix}
\right).
\end{align*} Since our goal is to find a unitary copy of the original isometric lift, we assume that $(\oq D_1,qD_2)$ is exactly equal to the pair above. Thus we are assuming that $\cK\ominus\cH = \sbm{H^2(\cF) \\ \cG}$. Suppose $C_j:\cH\to \sbm{H^2(\cF) \\ \cG}$ are given by the matrix forms
$$
C_j=\begin{bmatrix}
C_{j1} \\
C_{j2}
\end{bmatrix}
$$for each $j=1,2$. Since $W_1,W_2$ are unitary operators and $V_1,V_2$ are only contractive operators, we must have $C_{j2}=0$ for each $j=1,2$. Thus we have the original isometric lift $(V_1,V_2)$ unitarily equivalent to the pair
\[
\resizebox{\textwidth}{!}{$
\left(
\begin{bmatrix}
T_1 & 0 & 0\\
C_{11} & q \big[ (R_q \otimes P^\perp U) + (M_z R_q \otimes P U) \big] & 0  \\  
0 & 0 & q W_1
\end{bmatrix}
\begin{bmatrix}
T_2 & 0 & 0\\
C_{21} & \oq \big[ (R_\oq \otimes U^* P) + (R_\oq M_z \otimes U^* P^\perp) \big] & 0\\
0 & 0 & \oq W_2
\end{bmatrix}
\right)
$}
\] acting on $\sbm{\cH \\ H^2(\cF)\\ \cG}$.  Let us now use the fact that $(V_1,V_2)$ is a minimal lift, i.e.,
$$
\begin{bmatrix}
\cH\\
H^2(\cF)\\
\cG
\end{bmatrix}=
\bigvee_{n_1n_2\geq0}
V_1^{n_1}V_2^{n_2}\begin{bmatrix}
h\\
0\\
0
\end{bmatrix}.
$$Given the $3\times3$ block matrix representation of the pair $(V_1,V_2)$, minimality yields $\cG=\{0\}$. Consequently, the minimal isometric lift $(V_1,V_2)$ is unitarily equivalent to
$$
\left(\begin{bmatrix}
T_1&0\\
C_{11}&q[(R_q \otimes P^\perp U) + (M_z R_q \otimes P U)]
\end{bmatrix}
\begin{bmatrix}
T_2&0\\
C_{21}&\oq[(R_\oq \otimes U^*P)+ (R_\oq M_z \otimes U^*P^\perp)]
\end{bmatrix}
\right)
$$acting on $\sbm{\cH \\ H^2(\cF)}$. It remains to argue that the operators $C_{11}$ and $C_{21}$ are as stated. Let us note that
$$
V=V_1V_2=\begin{bmatrix}
T_1T_2&0\\
C& M_z
\end{bmatrix}
$$for some operator $C:\cH\to H^2(\cF)$. Since $V$ is an isometry, we have
$$
C^*C=I_\cH-T^*T \quad\mbox{and}\quad M_z^*C=0.
$$These two identities together gives
$$
C=\bev_0^*\Lambda D_T
$$for some isometry $\Lambda:\cD_T\to\cF$. Since $V_1,V_2$ are isometries as well, we also have
$$
V_1=V_2^*V_2V_1=\overline{q}V_2^*V \quad\mbox{and}\quad
V_2=V_1^*V.
$$Equating the (21) entries in $V_1=\overline{q}V_2^* V$ gives
\begin{align*}
C_{11}=\left((R_q \otimes PU)+ (M_z^* R_q\otimes P^\perp U)\right)\bev_0^*\Lambda D_T =  \bev_0^* P U \Lambda D_T .
\end{align*}
Similar computation with the identity $V_2=V_1^*V$ gives $C_{21}=\oq\bev_0^*  U^*P^\perp \Lambda D_T$ .
\end{proof}

	\section{A Douglas-type model for isometric lifts}\label{D:Mod}
	
In this section we develop another functional model for isometric lifts of a \(q\)-commuting contractive pair. The model presented here is arguably more elegant than the one presented in Theorem \ref{T:SDil}. Indeed, the functional model presented in this section is exactly in the model form presented in Theorem \ref{T:qBCL} and consequently, the isometric embedding of the abstract Hilbert space $\cH$ into the model space is a non-trivial isometric embedding unlike the inclusion map in the model presented in Theorem \ref{T:SDil}. Moreover, the model presented in this section will give a direct passage to further development of model theory for $q$-commuting contractive pairs as we shall see in the next section.

First, we associate a pair \((W_1, W_2)\) of \(q\)-commuting unitaries to a $q$-commuting contractive pair and show that this association is canonical. To achieve this, we first recall briefly the construction of an isometric lift of a single contraction operator due to Douglas \cite{Doug-Dilation}. So let \(T \) be a contraction acting on a Hilbert space $\cH$. Then the sequence \(\{T^n T^{*n}\}\) of positive semi-definite operators converges in \(\mathcal{B(H)}\) with respect to the Strong Operator Topology ($\operatorname{SOT}$). Define \(Q_{T^*}\) as the positive square root of the limit operator, that is, 
	\begin{align}\label{Q}
		Q_{T^*}^2 := \lim T^n T^{*n}.
	\end{align}
From the definition of $Q_{T^*}$ it follows that $TQ_{T^*}^2T^*=Q_{T^*}^2$, and so the operator  $X^*: \overline{\operatorname{Ran}Q_{T^*}}\to \overline{\operatorname{Ran}Q_{T^*}}$ defined densely by $$X^*Q_{T^*}h=Q_{T^*}T^*h \quad \text{for} \quad h\in \cH $$ is an isometry. Let $W^*_D$ on $\cQ_{T^*}\supseteq \overline{\operatorname{Ran} Q_{T^*}}$  be the minimal unitary extension of $X^*$, i.e., the space $\cQ_{T^*}$ is given by
	\begin{align}\label{QT-star}
	\cQ_{T^*}=\bigvee_{n\ge0}\{W_D^n\overline{\operatorname{Ran} Q_{T^*}}\}.
	\end{align} Consider next the operator $\cO_{D_{T^*},T^*} :\mathcal{H}\to H^2(\mathcal{D}_{T^*})$ defined by 
	\begin{align}\label{opQ}
	\cO_{D_{T^*},T^*}h=\sum_{n\ge 0}z^n D_{T^*} T^{*n}h=D_{T^*}(I_\cH-zT^*)^{-1}h \quad \text{for}\quad  h\in \cH.\end{align}
	\noindent
	It can be easily seen that the operator $\Pi_D:\mathcal{H}\to H^2(\mathcal{D}_{T^*})\oplus \cQ_{T^*}$ defined by 
	\begin{align}\label{1-D-PiD}
	\Pi_D(h)=
	\begin{bmatrix}
	\cO_{D_{T^*},T^*}h\\ Q_{T^*}
	\end{bmatrix}h
	\end{align}
is an isometry with the intertwining property 
$$
\Pi_D T^*=V_D^* \Pi_D .
$$ All this amounts to the following result.
\begin{thm}\label{T:1-D}
If $T$ is a contraction acting on $\cH$, $\Pi_D$ is the isometry as in \eqref{1-D-PiD}, and $V_D$ is the isometry defined on 
$$
\mathcal{K_D}:=
\begin{bmatrix}
 H^2(\mathcal{D}_{T^*})\\ \cQ_{T^*}
\end{bmatrix}
$$ by 
$$V_D
=\begin{bmatrix}
M_z&0\\
0& W_D
\end{bmatrix}:
\begin{bmatrix}
 H^2(\mathcal{D}_{T^*})\\ \cQ_{T^*}
\end{bmatrix}\to
\begin{bmatrix}
 H^2(\mathcal{D}_{T^*})\\ \cQ_{T^*}
\end{bmatrix},
$$then
$V_D$ is a minimal isometric lift of $T$ via the embedding $\Pi_D$.
\end{thm}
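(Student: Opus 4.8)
The plan is to prove the theorem in four steps: that $\Pi_D$ is an isometry, that the intertwining relation $\Pi_D T^*=V_D^*\Pi_D$ holds, that $V_D$ is an isometry, and finally---the only step requiring a genuine idea---that the lift is minimal, i.e.\ $\cK_D=\bigvee_{n\ge 0}V_D^n\Pi_D\cH$. The first three are exactly the facts asserted as ``easily seen'' just before the statement, so I would dispatch them quickly and spend the effort on minimality.

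For the isometry of $\Pi_D$ I would write $\|\Pi_D h\|^2=\|\cO_{D_{T^*},T^*}h\|^2+\|Q_{T^*}h\|^2$ and expand the first summand as $\sum_{n\ge 0}\|D_{T^*}T^{*n}h\|^2$. Since $D_{T^*}^2=I-TT^*$, each term equals $\|T^{*n}h\|^2-\|T^{*(n+1)}h\|^2$, so the series telescopes to $\|h\|^2-\lim_n\|T^{*n}h\|^2$, and by \eqref{Q} this limit is $\|Q_{T^*}h\|^2$; adding it back gives $\|h\|^2$. The intertwining relation I would check componentwise: on the Hardy summand a reindexing of the power series gives $M_z^*\cO_{D_{T^*},T^*}h=\cO_{D_{T^*},T^*}T^*h$, while on the $\cQ_{T^*}$ summand $W_D^*Q_{T^*}h=X^*Q_{T^*}h=Q_{T^*}T^*h$ directly from the definition of $X^*$ and the fact that $W_D^*$ restricts to $X^*$ on $\overline{\operatorname{Ran}Q_{T^*}}$. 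That $V_D$ is an isometry is immediate, $M_z$ being the shift and $W_D$ unitary on the orthogonal summand.

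The heart of the argument is minimality. Set $\mathcal{M}=\bigvee_{n\ge 0}V_D^n\Pi_D\cH$. The key computation is the identity $V_D\Pi_D T^*h=\Pi_D h-\sbm{1\otimes D_{T^*}h \\ 0}$: on the Hardy block, shifting the reindexed series peels off the constant term, $M_z\cO_{D_{T^*},T^*}T^*h=\cO_{D_{T^*},T^*}h-(1\otimes D_{T^*}h)$, and on the $\cQ_{T^*}$ block $W_DQ_{T^*}T^*h=W_DW_D^*Q_{T^*}h=Q_{T^*}h$. Hence $\sbm{1\otimes D_{T^*}h \\ 0}=\Pi_D h-V_D\Pi_D T^*h\in\mathcal{M}$, and since $D_{T^*}\cH$ is dense in $\cD_{T^*}$, every constant function $1\otimes e$ with $e\in\cD_{T^*}$ sits in $\mathcal{M}$ in the first slot. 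Applying $V_D^n=\sbm{M_z^n & 0 \\ 0 & W_D^n}$ and using that $\{z^n\otimes e\}$ spans $H^2(\cD_{T^*})$ gives $H^2(\cD_{T^*})\oplus\{0\}\subseteq\mathcal{M}$. Subtracting this from $\Pi_D h\in\mathcal{M}$ yields $\{0\}\oplus\overline{\operatorname{Ran}Q_{T^*}}\subseteq\mathcal{M}$, and one more application of $V_D^n$ together with the minimal-extension relation $\cQ_{T^*}=\bigvee_{n\ge 0}W_D^n\overline{\operatorname{Ran}Q_{T^*}}$ from \eqref{QT-star} gives $\{0\}\oplus\cQ_{T^*}\subseteq\mathcal{M}$. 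Therefore $\mathcal{M}=H^2(\cD_{T^*})\oplus\cQ_{T^*}=\cK_D$.

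The only place needing a real idea is the identity $V_D\Pi_D T^*h=\Pi_D h-\sbm{1\otimes D_{T^*}h\\0}$, which decouples the two summands at once by collapsing the unitary block (via $W_DW_D^*=I$) and stripping a constant off the shift block; everything else is bookkeeping. I expect the single point demanding care is the verification that $W_DQ_{T^*}T^*h=Q_{T^*}h$, i.e.\ that $W_D$ genuinely inverts $X^*$ on $\overline{\operatorname{Ran}Q_{T^*}}$---this is precisely where the unitarity of $W_D$ (as opposed to mere isometry) and the minimal-extension property \eqref{QT-star} are used.
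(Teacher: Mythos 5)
Your proof is correct and follows essentially the same route the paper sketches: the paper declares the isometry of $\Pi_D$ and the intertwining ``easily seen'' and attributes minimality precisely to \eqref{QT-star} together with the identity \eqref{DougMin}, which is exactly the Hardy-block half of your key identity $V_D\Pi_D T^*h=\Pi_D h-\sbm{1\otimes D_{T^*}h\\0}$. You have simply filled in the details (telescoping for the isometry, $W_DW_D^*=I$ on the unitary block, and the span argument) that the paper defers to Douglas's original article and \cite[Chapter 2]{BS-CUP}.
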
 The minimality of the lift, i.e., the fact that
$$
\cK_D=\begin{bmatrix}
 H^2(\mathcal{D}_{T^*})\\ \cQ_{T^*}
\end{bmatrix}
=\bigvee_{n\geq0}\begin{bmatrix}
M_z^n&0\\
0&W_D^n
\end{bmatrix}\begin{bmatrix}
\cO_{D_{T^*},T^*}\\
Q_{T^*}
\end{bmatrix}\cH
$$ is a result of \eqref{QT-star} and the following identity 
\begin{align}\label{DougMin}
\cO_{D_{T^*},T^*} - M_z \cO_{D_{T^*},T^*}T^* = D_{T^*}.
\end{align} For more details, the readers are referred to the original paper \cite{Doug-Dilation} by Douglas and the recent monograph \cite[Chapter 2]{BS-CUP}.

The goal of this section is to find a functional model for isometric lifts of a $q$-commuting contractive pair in this spirit of the Douglas' form as in Theorem \ref{T:1-D}. We need a pair $(W_1,W_2)$ of $q$-commuting unitary operators canonically associated to a $q$-commuting contractive pair $(T_1,T_2)$, which would give us $W_1W_2=qW_2W_1= W_D$, where $W_D$ is obtained as in the analysis above for the contraction operator $T=T_1T_2$.
\subsection{Canonical $q$-commuting unitary pair}\label{SS:CanonUniPair}
Suppose that $(T_1,T_2)$ is a $q$-commuting contractive pair acting on $\cH$ and $T=T_1T_2$. Let the positive semi-definite contraction $Q_{T^*}$ be as in \eqref{Q}. We note by using part (2) of Lemma \ref{lem:Prod} that 
	$$\langle T_iQ_{T^*}^2T^*_i h,h\rangle=\langle\lim T_iT^nT^{*n}T_i^* h,h\rangle = q^n\oq^n\langle \lim  T^n T_iT^*_iT^{*n}h,h \rangle\le \langle Q_{T^*}^2h,h \rangle $$  for all $h\in \mathcal{H}$ and $i=1,2$. This gives way to the existence of contractions $X^*_i :\overline{\operatorname{Ran}Q_{T^*}}\to \overline{\operatorname{Ran}Q_{T^*}}$ defined densely by 
	\begin{align}\label{X} 
		X_i^*Q_{T^*}h=Q_{T^*}T_i^*h \text{ for every }h\in\cH.
	\end{align} The contractive operators $X_1,X_2$ have the following properties: for every $h\in \cH$,
	\begin{align*} 
		&X_1^*X_2^*Q_{T^*}h=X_1^*Q_{T^*}T_2^*h=Q_{T^*}T_1^*T_2^*h=q Q_{T^*}T^*h=qX^*Q_{T^*}h \text{ and}\\ 
		&X_2^*X_1^*Q_{T^*}h=X_2^*Q_{T^*}T_1^*h=Q_{T^*}T_2^*T_1^*h=Q_{T^*}T^*=X^*Q_{T^*}h.
	\end{align*}
	
	Thus $(X_1^*,X_2^*)$ a $q$-commuting pair of contractions with their product $qX^*$ being an isometry on $\overline{\operatorname{Ran}Q_{T^*}}$. Consequently by part (1) of Lemma \ref{lem:Prod}, $(X_1^*,X_2^*)$ is a $q$-commuting pair of isometries. Applying Theorem \ref{isoext}, we obtain a $q$-commuting pair $(W_{1}^*,W_{2}^*)$ of unitaries such that $W_{1}^*W_{2}^*=qW_D^*$. Here we use the fact that $qW_D^*$ is the minimal unitary extension of the isometry $qX^*$.

	\begin{definition} \label{D:CanonUni}
		For a $q$-commuting pair of contractions \((T_1, T_2)\), the corresponding $q$-commuting pair of unitaries \((W_1, W_2)\) obtained as described above will be referred to as the {\em canonical $q$-commuting pair of unitaries} associated with \((T_1, T_2)\). 
	\end{definition}
	The canonicity is justified in the following theorem.
	\begin{thm}\label{T:cano}
		Let $(T_1,T_2)$ on $\cH$ and $(T_1',T_2')$ on $\mathcal{H'}$ be two \(q\)-commuting pairs of contractions with their respective canonical $q$-commuting pair being $(W_{1},W_{2})$ on $\cQ_{T^*}$ and $(W_{1}',W_{2}')$ on $\cQ_{T'^*}$. If $(T_1,T_2)$ and $(T_1',T_2')$ are unitarily equivalent, then so are $(W_{1},W_{2})$ and $(W_{1}',W_{2}')$. In particular, if $(T_1, T_2) = (T_1', T_2')$, then 
		$(W_{1},W_{2}) = (W_{1}',W_{2}')$. 
	\end{thm}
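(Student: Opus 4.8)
The plan is to transport the entire construction of the canonical pair along the unitary $\tau: \cH\to\cH'$ implementing the equivalence, i.e.\ $\tau T_i=T_i'\tau$ for $i=1,2$, verifying at each successive stage of the construction that the relevant operators remain intertwined. First I would descend to the ``defect-type'' spaces. From $\tau T_i=T_i'\tau$ one gets $\tau T=T'\tau$ and $\tau T^{*}=T'^{*}\tau$ for the products $T=T_1T_2$, $T'=T_1'T_2'$, hence $\tau T^nT^{*n}=T'^nT'^{*n}\tau$ for every $n$. Passing to the strong-operator limit in \eqref{Q} gives $\tau Q_{T^*}^2=Q_{T'^*}^2\tau$, and uniqueness of the positive square root upgrades this to $\tau Q_{T^*}=Q_{T'^*}\tau$. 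In particular $\tau$ carries $\overline{\operatorname{Ran}Q_{T^*}}$ unitarily onto $\overline{\operatorname{Ran}Q_{T'^*}}$; write $\tau_0$ for this restriction. A short computation on the dense set $\operatorname{Ran}Q_{T^*}$ using the defining relation \eqref{X} then yields $\tau_0X_i^*=X_i'^*\tau_0$, and consequently $\tau_0X_i=X_i'\tau_0$ and $\tau_0X=X'\tau_0$ for the products $X=X_1X_2$, $X'=X_1'X_2'$.

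Next I would extend $\tau_0$ across the minimal unitary extension of the product. Since $W_D^*$ (resp.\ $W_D'^*$) is the minimal unitary extension of the isometry $X^*$ (resp.\ $X'^*$) and $\tau_0$ is a unitary intertwining $X^*$ with $X'^*$, uniqueness of the minimal unitary extension furnishes a unique unitary $\sigma:\cQ_{T^*}\to\cQ_{T'^*}$ with $\sigma|_{\overline{\operatorname{Ran}Q_{T^*}}}=\tau_0$ and $\sigma W_D=W_D'\sigma$; concretely $\sigma(W_D^n\xi)=W_D'^n\tau_0\xi$ on the generating set, which is legitimate because $\cQ_{T^*}=\bigvee_{n\ge0}W_D^n\,\overline{\operatorname{Ran}Q_{T^*}}$ by \eqref{QT-star}.

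The crux is then to upgrade $\sigma W_D=W_D'\sigma$ to $\sigma W_i=W_i'\sigma$. The governing observation is that each $W_i$ is already determined by $W_D$ together with the action of $X_i^*$ on $\overline{\operatorname{Ran}Q_{T^*}}$. Indeed, since $(W_1,W_2)$ is $q$-commuting with product $W_D$, Lemma \ref{lem:Prod}(2) gives $W_1^*W_D^n=\oq^{\,n}W_D^nW_1^*$ and $W_2^*W_D^n=q^nW_D^nW_2^*$, and likewise for the primed pair. Testing $\sigma W_i^*=W_i'^*\sigma$ on a generator $W_D^n\xi$, I would push $W_i^*$ through $W_D^n$ by the relevant relation, land on $\overline{\operatorname{Ran}Q_{T^*}}$ where $W_i^*=X_i^*$, apply $\tau_0X_i^*=X_i'^*\tau_0$, and reassemble on the primed side using the same three ingredients; both sides collapse to $\oq^{\,n}W_D'^nX_1'^*\tau_0\xi$ (resp.\ $q^nW_D'^nX_2'^*\tau_0\xi$), so they agree. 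Taking adjoints gives $\sigma W_i=W_i'\sigma$, i.e.\ $(W_1,W_2)$ and $(W_1',W_2')$ are unitarily equivalent. I expect this to be the main obstacle: the essential point is the realization that the $q$-commutation relations let one reduce the action of $W_i^*$ on all of $\cQ_{T^*}$ to its \emph{known} action $X_i^*$ on the cyclic subspace $\overline{\operatorname{Ran}Q_{T^*}}$, so that intertwining the product $W_D$ automatically forces intertwining of each factor.

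Finally, for the ``in particular'' clause I would specialize to $(T_1,T_2)=(T_1',T_2')$ and $\tau=I_\cH$. Then $\tau_0=I$, and the extension produced in the second step is forced to be $\sigma=I_{\cQ_{T^*}}$, so $\sigma W_i=W_i'\sigma$ becomes $W_i=W_i'$. Equivalently, running the computation of the third step with $\tau_0=I$ and $\sigma=I$ shows that any two $q$-commuting unitary pairs on $\cQ_{T^*}$ extending $(X_1,X_2)$ and having product $W_D$ must coincide; this is precisely the well-definedness of the canonical pair, independent of any choice made in invoking Theorem \ref{isoext}.
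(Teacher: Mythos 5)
Your proposal is correct and follows essentially the same route as the paper: intertwine the $Q_{T^*}$'s, restrict to get an intertwiner of the $(X_1,X_2)$'s, extend along the generating set $\{W_D^n\xi\}$ to a unitary $\sigma$ intertwining $W_D$ with $W_D'$, and then use the $q$-commutation relations of Lemma \ref{lem:Prod}(2) to reduce the action of each $W_i$ on generators to its known action on the cyclic subspace $\overline{\operatorname{Ran}Q_{T^*}}$. The only (immaterial) difference is that you push $W_i^*$ through $W_D^n$ and use $W_i^*|_{\overline{\operatorname{Ran}Q_{T^*}}}=X_i^*$ directly, whereas the paper works with $W_1=W_DW_2^*$; both computations collapse to the same expression.
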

	\begin{proof}
		
		Let \(\psi: \mathcal{H} \to \mathcal{H'}\) be a unitary operator that intertwines \((T_1, T_2)\) and \((T_1', T_2')\). As before, define \(T = T_1 T_2\) and \(T' = T_1' T_2'\). Let \(Q_{T^*}\) and \(Q_{{T'}^*}\) be the positive semi-definite contractions, as described in \eqref{Q}, associated with \((T_1, T_2)\) and \((T_1', T_2')\), respectively. It is clear that \(\psi\) intertwines \(Q_{T^*}\) and \(Q_{{T'}^*}\), and therefore \(\psi\) maps \(\overline{\operatorname{Ran} Q_{T^*}}\) onto \(\overline{\operatorname{Ran} Q_{{T'}^*}}\). We denote the restriction of \(\psi\) to \(\overline{\operatorname{Ran} Q_{T^*}}\) by \(\psi\) itself. Let \((X_1, X_2)\) act on \(\overline{\operatorname{Ran} Q_{T^*}}\) and \((X_1', X_2')\) act on \(\overline{\operatorname{Ran} Q_{{T'}^*}}\) be the \(q\)-commuting pairs of co-isometries corresponding to the pairs \((T_1, T_2)\) and \((T_1', T_2')\), as mentioned at the beginning of this section. It is straightforward to verify from the definition that 
		$$ 
		\psi(X_1, X_2) = (X_1', X_2') \psi.
		$$ 
		Recall that \( W_D^* \) on \(\cQ_{T^*} \) is the minimal unitary extension of the isometry \( X^* \) as defined in \eqref{X}. Therefore, 
		$$ 
		\cQ_{T^*} = \overline{\text{span}}\{W_D^n x : x \in \overline{\operatorname{Ran} Q_{T^*}} \text{ and } n \geq 0\}.
		$$
		
		Given the analogous representation of \(\cQ_{T'^*}\), we define \(\tau_\psi:\cQ_{T^*} \to\cQ_{T'^*}\) by 
		\begin{align}\label{taupsi}  
			\tau_\psi: W_D^n x \mapsto W_D'^n \psi x, \quad \text{for all } x \in \overline{\operatorname{Ran} Q_{T^*}}\text{ and } n \geq 0. 
		\end{align} 
		This map is then extended linearly and continuously. It is clear that \(\tau_\psi\) is a unitary operator that intertwines \(W_D\) and \(W_D'\).

		For \( x \in \overline{\operatorname{Ran} Q_{T^*}} \) and \( n \geq 0 \), we have the following:
		
		\begin{align*} 
			\tau_\psi W_1(W_D^n x)= q \tau_\psi W_D^n (W_1 x)
			= q \tau_\psi W_D^{n+1}(W_2^* x) 
			= q W_D'^{n+1} \psi(X_2^* x) 
			&=q W_D'^{n+1} W_2'^* \psi x \\
			&= W_1' W_D'^n \psi x \\
			&= W_1' \tau_\psi(W_D^n x).
		\end{align*}
		
		A similar calculation shows that \(\tau_\psi\) also intertwines \(W_2\) and \(W_2'\).
		Note that if, in the above theorem, we assume \((T_1, T_2) = (T_1', T_2')\), then we have \(\cQ_{T^*} =\cQ_{T'^*}\), \(\psi = I_{\mathcal{H}}\), and \(\tau_\psi = I_{\cQ_{T^*}}\).   
	\end{proof}
	
	The following is a direct consequence of Theorem \ref{T:cano}.
	
	\begin{corollary} \label{uniquecano}
		For a \(q\)-commuting pair of contractions \((T_1, T_2)\), let \(Q_{T^*}\) be the positive semi-definite contraction and \(W_D\) be the unitary, as discussed earlier. Suppose \((W'_1, W'_2)\) is a pair of \(q\)-commuting unitaries such that for each \(j = 1, 2\),
		$${W'_j}^* Q_{T^*} = Q_{T^*}T_j^* \quad \text{and} \quad {W'_1}^* {W'_2}^* =q{W}_D^*.$$
		Then \((W'_1, W'_2) = (W_1, W_2)\).  
	\end{corollary}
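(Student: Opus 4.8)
The plan is to reduce the statement to the single computation already carried out in the proof of Theorem \ref{T:cano}, taking there the trivial unitary equivalence $\psi = I_\cH$ and observing that the only features of the canonical pair used in that argument are precisely the two relations imposed on $(W_1', W_2')$. Since, by construction, the canonical pair $(W_1, W_2)$ also satisfies $W_j^* Q_{T^*} = Q_{T^*} T_j^*$ for $j=1,2$ and $W_1^* W_2^* = q W_D^*$, both pairs will be seen to agree on a set spanning $\cQ_{T^*}$, forcing $(W_1', W_2') = (W_1, W_2)$.

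First I would translate the hypotheses into restriction statements on $\overline{\operatorname{Ran} Q_{T^*}}$. The relation ${W_j'}^* Q_{T^*} = Q_{T^*} T_j^*$ says precisely that ${W_j'}^*$ maps $\operatorname{Ran} Q_{T^*}$ into itself and agrees there, hence by continuity on all of $\overline{\operatorname{Ran} Q_{T^*}}$, with the densely defined isometry $X_j^*$ of \eqref{X}; the same holds for $W_j^*$ by the very construction of the canonical pair via Theorem \ref{isoext}. Thus ${W_j'}^*$ and $W_j^*$ coincide on $\overline{\operatorname{Ran} Q_{T^*}}$. Next, from ${W_1'}^*{W_2'}^* = q W_D^*$ together with the $q$-commutation of $(W_1', W_2')$ I would deduce $W_2' W_1' = \oq W_D$ and hence $W_1' W_2' = W_D$, matching the product $W_1 W_2 = W_D$ of the canonical pair.

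The heart of the matter is then the observation that for each $x \in \overline{\operatorname{Ran} Q_{T^*}}$ and $n \geq 0$,
\[
W_1' W_D^n x = q^n W_D^n W_1' x = q^n W_D^{n+1} {W_2'}^* x = q^n W_D^{n+1} X_2^* x,
\]
where the first equality uses Lemma \ref{lem:Prod}(2) applied to the $q$-commuting unitary pair $(W_1', W_2')$ with product $W_D$, the second uses $W_1' = W_D {W_2'}^*$, and the last uses ${W_2'}^* x = X_2^* x$. The right-hand side involves only $W_D$ and $X_2^*$, so the identical computation for the canonical pair produces the same vector; hence $W_1'$ and $W_1$ agree on $\{W_D^n x : x \in \overline{\operatorname{Ran} Q_{T^*}},\ n \geq 0\}$. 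A parallel computation, using instead $W_2' = \oq W_D {W_1'}^*$ together with ${W_1'}^* x = X_1^* x$ and the companion relation $W_2' W_D^n = \oq^{\,n} W_D^n W_2'$, gives $W_2' = W_2$ on the same set. Since by \eqref{QT-star} this set spans $\cQ_{T^*}$, linearity and continuity yield $(W_1', W_2') = (W_1, W_2)$.

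I do not anticipate a serious obstacle, as the entire content is contained in the proof of Theorem \ref{T:cano}; the corollary simply records that that argument never used anything about $(W_1', W_2')$ beyond the two displayed relations. The only point demanding care is the bookkeeping of the $q$-powers, which must agree on the two sides of each comparison — and they do, precisely because both pairs are $q$-commuting with the same product $W_D$, so Lemma \ref{lem:Prod}(2) produces the identical scalar $q^n$ (respectively $\oq^{\,n}$) in each case.
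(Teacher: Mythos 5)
Your proposal is correct and follows essentially the same route as the paper: the paper derives the corollary by specializing the computation in the proof of Theorem \ref{T:cano} to $\psi = I_\cH$, $\tau_\psi = I_{\cQ_{T^*}}$, and that computation uses nothing about the second pair beyond the two displayed relations — precisely the observation you make explicit. Your bookkeeping of the scalars ($q^n$ from Lemma \ref{lem:Prod}(2), and the deduction $W_1'W_2' = W_D$ from the hypotheses plus $q$-commutativity) is accurate, and the reduction to vectors $W_D^n x$ spanning $\cQ_{T^*}$ via \eqref{QT-star} closes the argument.
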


\subsection{A Douglas-type model for isometric lifts}
	We now have all the necessary tools to derive the main result of this section. 
	\begin{thm} \label{T:Dmod}
		Let $(T_1,T_2)$ be a pair of $q$-commuting contractions on $\cH$. Let $(W_{1},W_{2})$ be the canonical $q$-commuting pair of unitary operators for $(T_1,T_2)$ and $(\cF_*,\Lambda_*,P_*,U_*)$ be a special And\^o tuple for $(T_1^*,T_2^*)$ (see Definition \ref{D:SpecialAndo}). Consider the pair of operators 
		\begin{align}\label{Dmod} 
			(V^D_1,V^D_2)=\left(
			\begin{bmatrix}
			M_{U_*^*(P_*^\perp + z P_*)}R_q&0\\ 0&W_{1}
			\end{bmatrix},
			\begin{bmatrix}
			R_{\oq}M_{(P_* + z P_*^\perp)U_*}&0\\
			0&W_{2}
			\end{bmatrix}\right) \quad\mbox{on}\quad \begin{bmatrix}H^2(\cF_*)\\ \cQ_{T^*}\end{bmatrix}.
		\end{align}Let $\Pi^D:\cH\to \sbm{H^2(\cF_*)\\ \cQ_{T^*}}$ be defined by 
		\begin{align}\label{GenPi} 
			\Pi^D h:=\begin{bmatrix}
			I_{H^2}\otimes \Lambda_* & 0\\
			0 & I_{\cQ_{T^*}}
			\end{bmatrix}\Pi_Dh= \begin{bmatrix}
			\sum_{n=0}^{\infty}z^n\Lambda_* D_{T^*}T^{* n}h\\
			 Q_{T^*}h
			\end{bmatrix}. 
		\end{align} Then $\Pi^D$ is an isometry and $(\Pi^D;V^D_1,V^D_2)$ is a $q$-commuting isometric lift of $(T_1,T_2)$.
		
Conversely, if $(\bV_1,\bV_2)$ acting on $\bcK$ is any minimal isometric lift of $(T_1,T_2)$ via an isometric embedding $\bPi:\cH\to\bcK$, then there is an And\^o tuple $(\cF_*,P_*,\Lambda_*,U_*)$ so that $(\bV_1,\bV_2)$ is unitarily equivalent to a pair of the form as in \eqref{Dmod}, and the isometric embedding $\bPi$ must be of the form as in \eqref{GenPi}.
	\end{thm}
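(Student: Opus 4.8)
The plan is to prove both directions of Theorem \ref{T:Dmod} in parallel with the structure already used for Theorem \ref{T:SDil}, leveraging the canonical unitary pair and the Douglas model for the single contraction $T=T_1T_2$. For the \textbf{forward direction}, I would first confirm that $\Pi^D$ is an isometry: the bottom component $Q_{T^*}$ and the top component $\cO_{D_{T^*},T^*}$ already assemble into the Douglas isometry $\Pi_D$ of Theorem \ref{T:1-D}, and since $I_{H^2}\otimes\Lambda_*$ is an isometry (as $\Lambda_*$ is), the composition $\Pi^D$ is an isometry. Next I would verify that $(V_1^D,V_2^D)$ is $q$-commuting and that each $V_j^D$ is an isometry. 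Both blocks decouple, so the $(W_1,W_2)$ block is handled by construction (it is a $q$-commuting unitary pair), and the $H^2(\cF_*)$ block is precisely the model pair appearing in item (3) of Theorem \ref{T:qBCL}, so it is automatically a $q$-commuting isometric pair. The genuinely computational heart is the \emph{lifting identity} $\Pi^D T_j^* = V_j^{D*}\Pi^D$ for $j=1,2$; I expect to prove these by expanding both sides on a vector $h\in\cH$, using the relations \eqref{Prop2} together with the intertwining $Q_{T^*}T_j^* = W_j^* Q_{T^*}$ (equivalently $X_j^*$ from \eqref{X}) for the bottom component, and the series expansion \eqref{GenPi} for the top component. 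The key algebraic input will be matching the adjoint of $M_{U_*^*(P_*^\perp+zP_*)}R_q$ (and its $V_2^D$ analogue) against the shifted coefficients $\Lambda_*D_{T^*}T^{*n}$, which is exactly what \eqref{Prop2} is designed to supply.

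For the \textbf{converse direction}, I would begin as in Theorem \ref{T:SDil}: given a minimal $q$-commuting isometric lift $(\bV_1,\bV_2)$ on $\bcK$ with embedding $\bPi$, form the product $\bV=\bV_1\bV_2$, which is a minimal isometric lift of $T=T_1T_2$ via $\bPi$. By uniqueness of the minimal isometric lift of a single contraction, $\bV$ is unitarily equivalent to the Douglas lift $V_D$ of Theorem \ref{T:1-D}, and this unitary identifies $\bcK$ with $\sbm{H^2(\cD_{T^*})\\\cQ_{T^*}}$, under which $\bV=\sbm{M_z&0\\0&W_D}$ and $\bPi=\Pi_D$. With $\bcK$ so identified, I would then invoke the Wold-type block structure: the unitary part $\cQ_{T^*}=\cap_n \bV^n\bcK$ is reducing for both $\bV_1$ and $\bV_2$ (since $W_D=W_1W_2$ commutes appropriately and the unitary summand is the largest reducing unitary subspace for $\bV$), forcing each $\bV_j$ to be block-diagonal $\sbm{A_j&0\\0&W_j'}$ with $(W_1',W_2')$ a $q$-commuting unitary pair on $\cQ_{T^*}$ and $(A_1,A_2)$ a $q$-commuting isometric pair on $H^2(\cD_{T^*})$ whose product is $M_z$.

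For the $H^2(\cD_{T^*})$ block, the pair $(A_1,A_2)$ is a $q$-commuting isometric pair with product the \emph{pure} shift $M_z$, so by the model Theorem \ref{T:qBCL} (item (3)) it is unitarily equivalent, via a map fixing the product $M_z$, to $\left(M_{U_*^*(P_*^\perp+zP_*)}R_q,\ R_{\oq}M_{(P_*+zP_*^\perp)U_*}\right)$ for a projection $P_*$ and unitary $U_*$ on $\cF_*=\cD_{T^*}$; this supplies the And\^o tuple data. For the unitary block, I would identify $(W_1',W_2')$ with the canonical pair $(W_1,W_2)$ by checking the characterizing relations of Corollary \ref{uniquecano}: the product is $W_D$ by construction, and $W_j'^{*}Q_{T^*}=Q_{T^*}T_j^*$ follows from pushing the lifting relation $\bPi T_j^*=\bV_j^*\bPi$ through the identification and projecting onto $\cQ_{T^*}$. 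Finally, once the model form of $(\bV_1,\bV_2)$ is fixed, the embedding $\bPi$ is forced: its bottom component is $Q_{T^*}$ from the single-variable identification, and its top component must intertwine $T_j^*$ with $V_j^{D*}$, which by Lemma \ref{L:Zero} (uniqueness of intertwiners against a shift) pins it down to $(I_{H^2}\otimes\Lambda_*)\cO_{D_{T^*},T^*}$ for an isometry $\Lambda_*:\cD_{T^*}\to\cF_*$, i.e.\ exactly \eqref{GenPi}. I expect the \emph{main obstacle} to be the converse's middle step: rigorously justifying that the unitary summand $\cQ_{T^*}$ is simultaneously reducing for both $\bV_1$ and $\bV_2$ (not merely for their product $\bV$), and that the canonical pair's defining relations in Corollary \ref{uniquecano} are matched exactly — in particular tracking the unimodular factors $q,\oq$ correctly through the identification so that $W_1'W_2'=qW_2'W_1'=W_D$ holds on the nose rather than up to a scalar.
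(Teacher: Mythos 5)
Your forward direction is essentially the paper's argument: $\Pi^D$ is an isometry as a composition of $I_{H^2}\otimes\Lambda_*$ with the Douglas isometry $\Pi_D$, the pair \eqref{Dmod} is a $q$-commuting isometric pair by Theorem \ref{T:qBCL}, and the lifting identities $V_j^{D*}\Pi^D=\Pi^D T_j^*$ follow from \eqref{Prop2}, \eqref{X} and Lemma \ref{lem:Prod}. That part is fine.

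The converse, however, has a genuine gap at its very first step. You assert that $\bV=\bV_1\bV_2$ is a \emph{minimal} isometric lift of $T=T_1T_2$ on all of $\bcK$, and then use uniqueness of minimal single-variable lifts to identify $\bcK$ with $\sbm{H^2(\cD_{T^*})\\ \cQ_{T^*}}$. This is false in general: minimality of the pair lift (i.e.\ $\bcK=\bigvee_{n_1,n_2\geq0}\bV_1^{n_1}\bV_2^{n_2}\bPi\cH$) does not imply $\bcK=\bigvee_{n\geq0}\bV^n\bPi\cH$. The paper's own Example \ref{nonisolifts} is a counterexample: $(R_qM_{z_1},M_{z_2})$ on $H^2(\mathbb{D}^2)$ is a minimal $q$-commuting isometric lift of $(0,0)$ on $\mathbb{C}$, but its product is nowhere near a minimal isometric lift of $T=0$, whose minimal lift space is $H^2(\mathbb{D})$. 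Consequently the coefficient space of the shift part cannot be forced to be $\cD_{T^*}$; the statement only claims $\cF_*$ is the target of \emph{some} isometry $\Lambda_*:\cD_{T^*}\to\cF_*$, and in general $\cF_*=\cD_{\bV^*}$ is strictly larger than $\cD_{T^*}$. Your subsequent identification of the $H^2$-block with a qBCL pair ``on $\cF_*=\cD_{T^*}$'' inherits this error.

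The correct route, which is the one the paper takes, is to apply part (3) of Theorem \ref{T:qBCL} to the isometric pair $(\bV_1,\bV_2)$ itself, obtaining the block-diagonal model on $\sbm{H^2(\cD_{\bV^*})\\ \cQ_{\bV^*}}$ (this also disposes of your worry about $\cQ_{\bV^*}$ reducing both $\bV_1$ and $\bV_2$ --- it is built into that theorem). The Douglas model of $T$ then enters only through the subspace $\cK_1=\bigvee_{n\geq0}\bV^n\Pi\cH$, which \emph{is} a minimal isometric lift of $T$; uniqueness of minimal lifts gives a unitary $\tau$ from $\sbm{H^2(\cD_{T^*})\\ \cQ_{T^*}}$ onto $\cK_1$, viewed as an isometry into $\sbm{H^2(\cD_{\bV^*})\\ \cQ_{\bV^*}}$. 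One then writes $\tau=\sbm{A&B\\ C&D}$, kills $B$ by Lemma \ref{L:Zero}, shows $A=I_{H^2}\otimes\Lambda_*$ for an isometry $\Lambda_*:\cD_{T^*}\to\cD_{\bV^*}$ using $AD_{T^*}=D_{\bV^*}\bPi$ and the shift intertwining, deduces $C=0$ because $A$ is already isometric, and finally uses pair-minimality to see that $D$ is onto $\cQ_{\bV^*}$, hence unitary. This block-matrix analysis is what actually produces the form \eqref{GenPi} of $\bPi$; your appeal to Lemma \ref{L:Zero} alone to ``pin down'' the top component of $\bPi$ does not substitute for it.
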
 
	\begin{proof}
		The operator \(\Pi^D\) as in \eqref{GenPi} is an isometry since it is the product of two isometries. It is clear from Theorem \ref{T:qBCL} that \((V^D_1, V^D_2)\) forms a pair of \(q\)-commuting isometries on \(H^2(\mathcal{F}_*) \oplus\cQ_{T^*}\). Therefore, it suffices to show that the above pair lifts \((T_1, T_2)\), meaning that \({V_j^D}^* \Pi^D = \Pi^D T_j^*\) for all \(j = 1, 2\). For all $h\in \mathcal{H}$,
		
 \begin{align*}
    V^{D*}_1\Pi^D h &=\begin{bmatrix}M_{U^*_*P^\perp_*+zU^*_*P_*} R_q& 0\\0 &W_1\end{bmatrix}^*\Pi^D h 
    \\&=\begin{bmatrix}R_{\bar{q}} \otimes P^\perp_*U_*+R_{\bar{q}} M_z^*\otimes P_*U_* &0\\0& W_1^*\end{bmatrix}\begin{bmatrix}\sum_{n\ge 0} z^n\Lambda_*D_{T^*}{T^*}^n h \\ Q_{T^*}h\end{bmatrix}
    \\&=\begin{bmatrix}
    \sum_{n\ge 0} \bar{q}^n z^n P^\perp_*U_*\Lambda_*D_{T^*}{T^*}^n h+ \sum_{n\ge 1}\bar{q}^n z^{n-1}P_*U_*\Lambda_*D_{T^*}{T^*}^n h\\ W^*_1 Q_{T^*}h \end{bmatrix}
   \\&=\begin{bmatrix}\sum_{n\ge 0}\bar{q}^n z^n(P^\perp_*U_*\Lambda_*D_{T^*}+P_*U_*\Lambda_*D_{T^*}T^*){T^*}^n h\\ W^*_1 Q_{T^*}h\end{bmatrix}.
\end{align*}

 It follows from \eqref{Prop2}, \eqref{X} and Lemma \ref{lem:Prod} that the above expression is same as 
  \begin{align*}
     \begin{bmatrix}\sum_{n\ge 0}\bar{q}^n z^n \Lambda_*D_{T^*}T^*_1{T^*}^n h\\ Q_{T^*}T^*_1 h\end{bmatrix} 
    = \begin{bmatrix}\sum_{n\ge 0} \bar{q}^n{q}^nz^n \Lambda_*D_{T^*}{T^*}^n T^*_1h\\ Q_{T^*}T^*_1 h  \end{bmatrix} =\Pi^D T^*_1 h.\end{align*}
		Proceeding in a similar manner by combining \eqref{Prop2}, \eqref{X}, and Lemma \ref{lem:Prod} gives the intertwining $V_2^{D*}\Pi^D=\Pi^D T_2^*$.
		
We now prove the converse direction. The first step is to apply part (3) of Theorem \ref{T:qBCL}: there exist a projection-unitary pair $(P_*,U_*)$ in $\cB(\cD_{V^*})$, and a $q$-commuting unitary pair $(W_1,W_2)$ acting on $\cap_{n\geq0} V^n\cK$ so that the original isometric lift $(\bV_1,\bV_2)$ is jointly unitarily equivalent to
\begin{align*}
(V_1,V_2)=\left(\begin{bmatrix}
 M_{ U_*^*P_*^\perp +z U_*^*P_*}R_q & 0\\ 0 & W_{1} \end{bmatrix}, 
\begin{bmatrix}
R_\oq M_{P_* U_* + z P_*^\perp U_*}&0\\0&W_{2}
\end{bmatrix}\right)\quad\mbox{ on }
\begin{bmatrix}
H^2(\cD_{\bV^*})\\
\cQ_{\bV^*}
\end{bmatrix}
\end{align*}via the unitary identification map
$$
\tau_{\rm W}:k\mapsto \begin{bmatrix}
D_{\bV^*}(I_\cK-z\bV^*)^{-1}\\
Q_{\bV^*}
\end{bmatrix}k.
$$
Since we are interested in finding a unitary copy of the original isometric lift $(\bV_1,\bV_2)$, we assume without loss of generality that the isometric lift is $(V_1,V_2)$ as in the $2\times2$ block matrix form above and the isometric embedding $\Pi$ of $\cH$ is now given by 
\begin{align}\label{NewPi}
\Pi=\tau_{\rm W} \bPi = \begin{bmatrix}
D_{V^*}(I_\cK-zV^*)^{-1}\\
Q_{V^*}
\end{bmatrix}\bPi.
\end{align}
Let us note that with $W=W_1W_2$,
\begin{align}\label{K1}
\cK_{1}=\bigvee_{n\geq0}\begin{bmatrix}
M_z^{\cD_{\bV^*}}&0\\
0&W
\end{bmatrix}^n\Pi\cH\subset \begin{bmatrix}
H^2(\cD_{\bV^*})\\
\cQ_{\bV^*}
\end{bmatrix}
\end{align}is a minimal isometric lift of $T=T_1T_2$. Since any two minimal isometric lifts of a single contraction operator are unitarily equivalent (see e.g., \cite{Nagy-Foias}, Chapter I, Theorem \(4.1\)), there is a unitary operator
$$
\tau:\begin{bmatrix}
H^2(\cD_{T^*})\\
\cQ_{T^*}
\end{bmatrix}\to \cK_1
$$such that
\begin{align}\label{DougKey}
&\tau
\begin{bmatrix}
M_z^{\cD_{T^*}} & 0\\
0& W_D
\end{bmatrix}=
\begin{bmatrix}
M_z^{\cD_{\bV^*}}&0\\
0& W
\end{bmatrix}\tau \quad\mbox{and}\quad\\
&\tau\Pi_D=\tau\begin{bmatrix}
D_{T^*}(I-zT^*)^{-1}\\
Q_{T^*}
\end{bmatrix}=\begin{bmatrix}
D_{\bV^*}(I-z\bV^*)^{-1}\\
Q_{\bV^*}
\end{bmatrix}\bPi=\Pi.\label{DougKey1}
\end{align}
Let us view $\tau$ as an isometry from $\sbm{H^2(\cD_{T^*})\\ \cQ_{T^*}}\to \sbm{H^2(\cD_{\bV^*})\\ \cQ_{\bV^*}}$ with $\operatorname{Ran}\tau = \cK_1$. Let us write
$$
\tau= \begin{bmatrix}
A&B\\
C&D
\end{bmatrix}:\begin{bmatrix}
H^2(\cD_{T^*})\\
\cQ_{T^*}
\end{bmatrix} \to
\begin{bmatrix}
H^2(\cD_{\bV^*})\\
\cQ_{\bV^*}
\end{bmatrix}.
$$Then from equation \eqref{DougKey} we have
\begin{align}\label{4Ints}
AM_z^{\cD_{T^*}}=M_z^{\cD_{\bV^*}}A,\quad BD_D=M_z^{\cD_{\bV^*}}B,\quad CM_z^{\cD_{T^*}}=WC,\quad\mbox{and}\quad DW_D=WD.
\end{align}By Lemma \ref{L:Zero}, $B=0$. This together with \eqref{DougKey1} gives
\begin{align}\label{The2ndOne}
&AD_{T^*}(I-zT^*)^{-1} = D_{\bV^*}(I-z\bV^*)^{-1}\bPi \quad\mbox{and}\quad\notag\\
&CD_{T^*}(I-zT^*)^{-1}+DQ_{T^*}=Q_{\bV^*}\bPi.
\end{align}The first of the two equations above, the first intertwining in \eqref{4Ints} and the identity \eqref{DougMin} gives
\begin{align*}
AD_{T^*}=D_{\bV^*}\bPi.
\end{align*}This shows that $A$ takes the space of constant functions of $H^2(\cD_{T^*})$ into that of $H^2(\cD_{\bV^*})$. This and the intertwining $AM_z^{\cD_{T^*}}=M_z^{\cD_{\bV^*}}A$ together forces $A$ to be of the form
$$
A=I_{H^2}\otimes\Lambda_*
$$for some isometry $\Lambda_*:\cD_{T^*}\to\cD_{\bV^*}$. Thus the isometry $\tau$ has the property that 
$$
A=P_{H^2(\cD_{\bV^*})}\tau|_{H^2(\cD_{T^*})}
$$ is also an isometry. This forces 
$$
C=P_{\cQ_{\bV^*}}\tau|_{H^2(\cD_{T^*})} = 0.
$$ Consequently, we have
$$
\tau=\begin{bmatrix}
I_{H^2}\otimes \Lambda_* & 0\\
0& D
\end{bmatrix}
$$for some isometries $\Lambda_*:\cD_{T^*}\to\cD_{\bV^*}$ and $\omega:\cQ_{T^*}\to\cQ_{\bV^*}$.

Since $\operatorname{Ran}{\tau}=\cK_1$, from the expression \eqref{K1} of the space $\cK_1$ and \eqref{NewPi}, we have
\begin{align*}
\operatorname{Ran}D=P_{\cQ_{\bV^*}}\cK_1= \bigvee_{n\geq0}W^n\Pi\cH=\bigvee_{n\geq0}W^nQ_{\bV^*}\bPi\cH.
\end{align*}The last intertwining in \eqref{4Ints} is same as $W^*D=DW_D^*$, and therefore $\operatorname{Ran}D$, which contains $P_{\cQ_{\bV^*}}\Pi\cH$ as a subspace, must be reducing for $W$. We started with a minimal isometric lift $(\bV_1,\bV_2)$ and so we must have
$$
\operatorname{Ran}D=\cQ_{\bV^*},
$$in other words, $D:\cQ_{T^*}\to\cQ_{\bV^*}$ must be a unitary.

	\end{proof}
	\begin{remark}
		
		The operators \( U_1 \) and \( U_2 \) on \( L^2(\cF_*) \oplus \cQ_{T^*} \) defined as follows:
		
		\[
		U_1 := \begin{bmatrix} M_{U^*_*P^\perp_*+\zeta U^*_*P_*} R_q & 0 \\ 0 & W_1 \end{bmatrix}, \quad U_2 := \begin{bmatrix} R_{\oq} M_{P_*U_*+\zeta P^\perp_*U_*} & 0 \\ 0 & W_2 \end{bmatrix}
		\]
		are \( q \)-commuting unitary extensions of \( V_1^D \) and \( V_2^D \), respectively. Therefore, the pair \( (U_1, U_2) \) represents a \( q \)-commuting unitary dilation of \( (T_1, T_2) \).

	\end{remark}
We conclude this section with a corollary to the above theorem. This will be utilized in the subsequent section for the development of the Sz.-Nagy--Foias-type functional model for $q$-commuting contractive pairs. This corollary follows from the intertwinings $(V_1^{D*},V_2^{D*})\Pi^D=\Pi^D(T_1^*,T_2^*)$ established in the proof of the forward direction of Theorem \ref{T:Dmod} and the matrix representations \eqref{Dmod} and \eqref{GenPi} of the operators involved in the intertwinings. 
	\begin{corollary}\label{intphiD}
		Given a pair \( (T_1, T_2) \) of \( q \)-commuting contractions, the isometry \( \Pi_D \) satisfies the following intertwining relations:
		\begin{align*}
			&\Pi_D T_1^* = \begin{bmatrix}
			R_{\oq} M^*_{\Lambda_*^* U_*^* P_*^\perp \Lambda_* + z \Lambda_*^* U_*^* P_* \Lambda_*} &0\\
			0& W_1^*
\end{bmatrix}\Pi_D = \begin{bmatrix}
			R_{\oq} M^*_{G_1^* + z G_2} &0\\
			0& W_1^*
\end{bmatrix}\Pi_D\quad\mbox{and}\\
&
\Pi_D T_2^* = \begin{bmatrix}
 M^*_{\Lambda_*^* P_* U_* \Lambda_* + z \Lambda_*^* P_*^\perp U_* \Lambda_*} R_q &0\\ 0&W_2^* \end{bmatrix} \Pi_D=
 \begin{bmatrix}
 M^*_{G_2^* + z G_1} R_q &0\\ 0&W_2^* 
 \end{bmatrix}\Pi_D,
		\end{align*}
		where we used the short-hand notation
		\begin{align}\label{Gs}
(G_1,G_2)=\Lambda_*^* (P_*^\perp U_*, U_*^*P_* )\Lambda_*.
		\end{align}
	\end{corollary}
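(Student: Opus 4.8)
The plan is a direct computation that leverages the factorization of the embedding $\Pi^D$ through $\Pi_D$ recorded in \eqref{GenPi}, together with the intertwinings for $(V_1^D,V_2^D)$ already proved in the forward direction of Theorem~\ref{T:Dmod}. The idea is to compress those intertwinings back down to $\Pi_D$.

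First I would set
$$
J:=\begin{bmatrix} I_{H^2}\otimes\Lambda_* & 0\\ 0 & I_{\cQ_{T^*}}\end{bmatrix},
$$
so that $\Pi^D=J\Pi_D$ by \eqref{GenPi}. Since $\Lambda_*$ is an isometry, $\Lambda_*^*\Lambda_*=I_{\cD_{T^*}}$, and hence $J^*J=I$ on $\sbm{H^2(\cD_{T^*})\\ \cQ_{T^*}}$. The forward direction of Theorem~\ref{T:Dmod} supplies the relations $V_j^{D*}\Pi^D=\Pi^D T_j^*$ for $j=1,2$; substituting $\Pi^D=J\Pi_D$ and left-multiplying by $J^*$ yields
$$
\Pi_D T_j^* = J^*V_j^{D*}J\,\Pi_D,\qquad j=1,2.
$$
Thus it remains only to identify the operators $J^*V_j^{D*}J$.

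Since both $J$ and each $V_j^D$ from \eqref{Dmod} are block diagonal, $J^*V_j^{D*}J$ is block diagonal with $(2,2)$ entry simply $W_j^*$. For the $(1,1)$ entry I would use two elementary facts: the rotation operators $R_q$ and $R_{\oq}=R_q^*$ act only on the $z$-variable and therefore commute with $I_{H^2}\otimes\Lambda_*$ and with $I_{H^2}\otimes\Lambda_*^*$; and for any bounded $\cB(\cF_*)$-valued symbol $\varphi$ one has the pointwise identity
$$
(I_{H^2}\otimes\Lambda_*^*)\,M_\varphi\,(I_{H^2}\otimes\Lambda_*)=M_{\Lambda_*^*\varphi\Lambda_*},
$$
which is checked by evaluating both sides at an arbitrary $z\in\bD$ on a function in $H^2(\cD_{T^*})$; taking adjoints gives the same statement for $M_\varphi^*$. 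Applying this with $\varphi(z)=U_*^*(P_*^\perp+zP_*)$ (for $V_1^D$) and with $\varphi(z)=(P_*+zP_*^\perp)U_*$ (for $V_2^D$), and moving the rotation operator past $I_{H^2}\otimes\Lambda_*$, I obtain the $(1,1)$ entries $R_{\oq}M_{\Lambda_*^* U_*^* P_*^\perp\Lambda_*+z\Lambda_*^* U_*^* P_*\Lambda_*}^*$ and $M_{\Lambda_*^* P_* U_*\Lambda_*+z\Lambda_*^* P_*^\perp U_*\Lambda_*}^* R_q$, respectively. Finally, unwinding the shorthand $(G_1,G_2)=\Lambda_*^*(P_*^\perp U_*, U_*^* P_*)\Lambda_*$ from \eqref{Gs} rewrites these symbols as $G_1^*+zG_2$ and $G_2^*+zG_1$, which is precisely the asserted form.

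The computation is routine; the only point requiring care is the compression-of-multiplier identity together with the correct tracking of the order of the rotation operators relative to the multiplier under adjoints (note $V_1^{D*}=R_{\oq}M_\varphi^*$ while $V_2^{D*}=M_\varphi^* R_q$). No genuine obstacle arises beyond this bookkeeping, which is why the result is stated as a corollary.
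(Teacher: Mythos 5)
Your argument is correct and is essentially the paper's own: the paper obtains the corollary precisely by combining the intertwinings $V_j^{D*}\Pi^D=\Pi^D T_j^*$ from the forward direction of Theorem~\ref{T:Dmod} with the block representations \eqref{Dmod} and \eqref{GenPi}, which is exactly what your compression $\Pi_D T_j^*=J^*V_j^{D*}J\,\Pi_D$ carries out in detail. The compression-of-multiplier identity and the placement of $R_q$, $R_{\oq}$ relative to the adjointed multipliers are all handled correctly.
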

%
%
%
%
%
%
%
%
%

The operator pair $(G_1,G_2)$ as in \eqref{Gs} holds deeper significance for a $q$-commuting contractive pair $(T_1,T_2)$. It seems that the pair depends on the choice of the And\^o tuple $(\cF_*,\Lambda_*,P_*,U_*)$ but as we shall see in the next section, they are uniquely determined by the $q$-commuting contractive pair $(T_1,T_2)$.

\subsection{A Sz.-Nagy--Foias-type model for isometric lifts}

Here we develop yet another type of functional model for the minimal isometric lifts of a \(q\)-commuting contractive pair but in the spirit of the functional model developed by Sz.-Nagy and Foias \cite{Nagy-Foias} for single contractions. We begin by reviewing the work of Sz.-Nagy and Foias.
	
	Given a contraction \( T \) on a Hilbert space \( \mathcal{H} \), consider the map  \( \Theta_T: \mathbb{D} \to \mathcal{B}(\mathcal{D}_T, \mathcal{D}_{T^*}) \) defined by
	\begin{align}
		\Theta_T(z) = \left[ -T + z D_{T^*} \left( I_{\mathcal{H}} - z T^* \right)^{-1}D_T \right]|_{\cD_T}:\cD_T\to\cD_{T^*} \quad \text{for }z\in \mathbb{D}.
	\end{align}
	
The well-definedness and the analyticity of the map \(\Theta_T\) is a consequence of the convergence of the Neumann series and the relation between the defect operators
$$
TD_T = D_{T^*}T.
$$ As demonstrated in \cite[Chapter VI]{Nagy-Foias}, it turns out that $\Theta_T(z)$ is a contraction for each $z$ in $\mathbb D$ and that it is purely contractive, i.e.,
$$
\|\Theta_T(0)f\| < \|f\| \quad\mbox{ for all}\quad 0\neq f \in \cD_T.
$$ 
The $\cB(\cD_T,\cD_{T^*})$-valued analytic function $\Theta_T$ is referred to as the \textit{characteristic function} for \(T\). The characteristic functions is a unitary invariant for \textit{cnu} contractions in the sense that \textit{two cnu contractions $T,T'$ are unitarily equivalent to if and only if their characteristic functions $\Theta_T$ and $\Theta_{T'}$ coincide, i.e., there are unitary operators $u:\cD_{T}\to\cD_{T^*}$ and $u_*:\cD_{T'}\to\cD_{T'^*}$ such that 
\begin{align}\label{CharcCoin}
u_{*}\Theta_T(z)=\Theta_{T'}(z)u \quad\mbox{for all}\quad z\in\bD.
\end{align}}
Given a contractive operator $T$, we associate the $\cB(\cD_T)$-valued function \(\Delta_{\Theta_T}\), which is defined almost everywhere on the unit circle \(\mathbb{\mathbb T}\) by \begin{align}\Delta_{\Theta_T}(\zeta) := (I - \Theta_T(\zeta)^* \Theta_T(\zeta))^{1/2}.\end{align}
Here $\Theta_T(\zeta)$ is defined as the radial limit
$$
\Theta_T(\zeta)=\lim_{r\to1-}\Theta(r\zeta)\quad\mbox{for }\zeta\in\bT
$$ of the characteristic function; it is known (see \cite[Chapter V]{Nagy-Foias}) that the radial limit exists almost everywhere in $\mathbb T$ for any bounded operator-valued analytic function. 

It is possible to find a functional model for isometric lifts of a cnu contraction $T$ involving the characteristic function for $T$.  Let $\cQ_{T^*}$ be the Hilbert space as appeared in \eqref{QT-star} for the analysis of Douglas model for isometric lifts. It is known (see for example \cite[Chapter 2]{BS-CUP}) that there is a unitary $\omega_{\rm{D,NF}}:\cQ_{T^*}\to\overline{\Delta_{\Theta_T } L^2(\cD_T)}$ so that the following intertwining holds:
\begin{align}\label{omega-DNF}
\omega_{\rm{D,NF}} \cdot W_D = M_\zeta|_{\overline{\Delta_{\Theta_T } L^2(\cD_T)}}\cdot \omega_{\rm{D,NF}}.
\end{align}Consider the isometric embedding
$$
\Pi_{\rm NF}:\cH\to \begin{bmatrix}
H^2(\cD_{T^*})\\
\overline{\Delta_{\Theta_T } L^2(\cD_T)}
\end{bmatrix}
$$defined by 
\begin{align}\label{NF-IsoEmbed}
\Pi_{\rm NF}:=\begin{bmatrix}
I_{H^2(\cD_{T^*})}&0\\
0& \omega_{\rm{D,NF}}
\end{bmatrix}\Pi_D=
\begin{bmatrix}
D_{T^*}(I_\cH-zT^*)^{-1}\\
\omega_{\rm{D,NF}}Q_{T^*}
\end{bmatrix}
\end{align}and the isometric operator
\begin{align}\label{NF-Iso}
V_{\rm NF}:=
\begin{bmatrix}
M_z&0\\
0& M_\zeta|_{\overline{\Delta_{\Theta_T } L^2(\cD_T)}}
\end{bmatrix}:
\begin{bmatrix}
H^2(\cD_{T^*})\\
\overline{\Delta_{\Theta_T } L^2(\cD_T)}
\end{bmatrix}\to
\begin{bmatrix}
H^2(\cD_{T^*})\\
\overline{\Delta_{\Theta_T } L^2(\cD_T)}
\end{bmatrix}.
\end{align}
The following functional model for isometric lifts of a single contractive operator is due to Sz.-Nagy and Foias. We arrive at this by the discussion above and the fact that $(\Pi_D,\sbm{M_z&0\\ 0& W_D})$ is a minimal isometric lift acting on the space $\cK_D=\sbm{H^2(\cD_{T^*})\\ \cQ_{T^*}}$.
	\begin{thm}[Sz.-Nagy and Foias]
		Let $T$ be a cnu contraction on a Hilbert space $\cH$. Then the isometric operator $V_{\rm NF}$ as in \eqref{NF-Iso} is a minimal isometric lift of $T$ acting on
		\begin{align}\label{KthetaT}
		\cK_{\Theta_T}:= \begin{bmatrix}
H^2(\cD_{T^*})\\
\overline{\Delta_{\Theta_T } L^2(\cD_T)}
\end{bmatrix}
		\end{align}via the embedding $\Pi_{\rm NF}$ as in \eqref{NF-IsoEmbed}, i.e.,
		\begin{align}\label{NF-Int}
V_{\rm NF}^*\Pi_{\rm NF}=\Pi_{\rm NF} T^*.
		\end{align}
	\end{thm}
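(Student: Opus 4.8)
The plan is to realize the asserted pair $(\Pi_{\rm NF}, V_{\rm NF})$ as the unitary push-forward of the Douglas minimal isometric lift $(\Pi_D, V_D)$ of Theorem \ref{T:1-D}, via the block-diagonal map that fixes the Hardy coordinate and applies $\omega_{\rm{D,NF}}$ to the residual coordinate. Concretely, I would introduce
$$
\Omega := \begin{bmatrix} I_{H^2(\cD_{T^*})} & 0 \\ 0 & \omega_{\rm{D,NF}} \end{bmatrix} : \cK_D = \begin{bmatrix} H^2(\cD_{T^*}) \\ \cQ_{T^*} \end{bmatrix} \longrightarrow \cK_{\Theta_T} = \begin{bmatrix} H^2(\cD_{T^*}) \\ \overline{\Delta_{\Theta_T} L^2(\cD_T)} \end{bmatrix}.
$$
Since $\omega_{\rm{D,NF}}$ is unitary, $\Omega$ is unitary, and by the very definition \eqref{NF-IsoEmbed} of $\Pi_{\rm NF}$ one has $\Pi_{\rm NF} = \Omega\,\Pi_D$; hence $\Pi_{\rm NF}$ is an isometry, being the composition of the unitary $\Omega$ with the isometry $\Pi_D$.

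The key identity to check next is $V_{\rm NF} = \Omega\,V_D\,\Omega^*$. Writing both $\Omega V_D$ and $V_{\rm NF}\Omega$ in $2\times 2$ block form, the $(1,1)$ entries agree trivially (both equal $M_z$), while equality of the $(2,2)$ entries is exactly the intertwining \eqref{omega-DNF}, namely $\omega_{\rm{D,NF}} W_D = M_\zeta|_{\overline{\Delta_{\Theta_T} L^2(\cD_T)}}\,\omega_{\rm{D,NF}}$. Here I would note in passing that $M_\zeta$ commutes with the pointwise multiplication operator $\Delta_{\Theta_T}$, so that $\overline{\Delta_{\Theta_T} L^2(\cD_T)}$ is reducing for $M_\zeta$ and the restriction appearing in \eqref{NF-Iso} is a genuine well-defined isometry. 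Granting this, $V_{\rm NF}$ is isometric since it is unitarily equivalent, through $\Omega$, to the isometry $V_D$.

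With these two facts the remaining assertions transfer formally. For the lifting relation \eqref{NF-Int} I would compute
$$
V_{\rm NF}^*\,\Pi_{\rm NF} = \Omega\,V_D^*\,\Omega^*\,\Omega\,\Pi_D = \Omega\,V_D^*\,\Pi_D = \Omega\,\Pi_D\,T^* = \Pi_{\rm NF}\,T^*,
$$
where the middle step is the Douglas lifting property $V_D^*\Pi_D = \Pi_D T^*$ from Theorem \ref{T:1-D} and the outer steps use $\Omega^*\Omega = I$ together with $\Pi_{\rm NF} = \Omega\Pi_D$. For minimality I would push the Douglas minimality $\cK_D = \bigvee_{n\ge 0} V_D^n\,\Pi_D\,\cH$ through the unitary $\Omega$, using $\Omega\,V_D^n = V_{\rm NF}^n\,\Omega$, to obtain
$$
\cK_{\Theta_T} = \Omega\,\cK_D = \bigvee_{n\ge 0}\Omega\,V_D^n\,\Pi_D\,\cH = \bigvee_{n\ge 0} V_{\rm NF}^n\,\Pi_{\rm NF}\,\cH.
$$

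I expect no substantive obstacle here, because the one genuinely analytic ingredient — the existence of the unitary $\omega_{\rm{D,NF}}$ realizing \eqref{omega-DNF}, which repackages the Douglas residual space $\cQ_{T^*}$ as the Sz.-Nagy--Foias model space $\overline{\Delta_{\Theta_T} L^2(\cD_T)}$ — has already been imported from the classical theory and from \cite{BS-CUP}. The only point requiring a moment's care is the $M_\zeta$-reducibility of the model space, ensuring $V_{\rm NF}$ is a well-defined isometry, and this is immediate from the commutation of the two multiplication operators.
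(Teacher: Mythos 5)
Your proposal is correct and is essentially the paper's own argument: the paper likewise obtains the theorem by transporting the Douglas minimal isometric lift $(\Pi_D, V_D)$ of Theorem \ref{T:1-D} through the block-diagonal unitary $I_{H^2(\cD_{T^*})}\oplus\omega_{\rm{D,NF}}$, with the intertwining \eqref{omega-DNF} imported from the classical theory. You merely make explicit the routine verifications (the conjugation identity $V_{\rm NF}=\Omega V_D\Omega^*$, the transfer of the lifting relation, and of minimality) that the paper leaves to the reader.
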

With $(W_1,W_2)$ as the canonical $q$-commuting unitary pair as in Definition \ref{D:CanonUni} and the unitary operator $\omega_{\rm{D,NF}}$ as in \eqref{omega-DNF}, let us define
	\begin{align}\label{unifunda}
		W^{\textrm{NF}}_i := \omega_{\rm{D,NF}} W_i \omega_{\rm{D,NF}}^* \quad \text{for} \quad i = 1, 2.
	\end{align}
	Then we have the following.
	\begin{thm} \label{T:NFmod}
		Let $(\!T_1,T_2\!)$ be a pair of $q$-commuting contractions on $\cH$ with the product operator \(T=T_1T_2\) being a cnu contraction. Let $(W^{\textrm{NF}}_{1},W^{\textrm{NF}}_{2})$ be as defined in \eqref{unifunda} and $(\cF_*,\Lambda_*,P_*,U_*)$ be a special And\^o tuple for $(T_1^*,T_2^*)$. Consider the pair of operators 
		\begin{align}\label{NFmod} 
			(V^{\rm{NF}}_1,V^{\rm{NF}}_2)=\left(
			\begin{bmatrix}
			M_{U_*^*(P_*^\perp + z P_*)}R_q&0\\ 0&W^{\textrm{NF}}_{1}
			\end{bmatrix},
			\begin{bmatrix}
			R_{\oq}M_{(P_* + z P_*^\perp)U_*}&0\\
			0&W^{\textrm{NF}}_{2}
			\end{bmatrix}\right)\end{align}
 on $ \begin{bmatrix}H^2(\cF_*)\\ \overline{\Delta_{\Theta_T } L^2(\cD_T)}\end{bmatrix} $ and let $\Pi^{\textrm{NF}}:\cH\to \sbm{H^2(\cF_*)\\ \overline{\Delta_{\Theta_T } L^2(\cD_T)}}$ be defined by 
		\begin{align}\label{NFGenPi} 
			\Pi^{\textrm{NF}} h:=\begin{bmatrix}
			I_{H^2}\otimes \Lambda_* & 0\\
			0 & I_{\overline{\Delta_{\Theta_T } L^2(\cD_T)}}
			\end{bmatrix}\Pi_{\rm{NF}}h= \begin{bmatrix}
			\sum_{n=0}^{\infty}z^n\Lambda_* D_{T^*}T^{* n}h\\
			 \omega_{\rm{D,NF}}Q_{T^*}h
			\end{bmatrix}. 
		\end{align} Then $\Pi^{\textrm{NF}}$ is an isometry and $(\Pi^{\textrm{NF}}; V^{\rm{NF}}_1, V^{\rm{NF}}_2)$ is a $q$-commuting isometric lift of $(T_1,T_2)$.
		
Conversely, if $(\bV_1,\bV_2)$ acting on $\bcK$ is any minimal isometric lift of $(T_1,T_2)$ via an isometric embedding $\bPi:\cH\to\bcK$, then there is an And\^o tuple $(\cF_*,P_*,\Lambda_*,U_*)$ so that $(\bV_1,\bV_2)$ is unitarily equivalent to a pair of the form as in \eqref{NFmod}, and the isometric embedding $\bPi$ must be of the form as in \eqref{NFGenPi}.
	\end{thm}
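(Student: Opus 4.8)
The plan is to observe that the Sz.-Nagy--Foias model \eqref{NFmod}--\eqref{NFGenPi} is precisely the Douglas model of Theorem \ref{T:Dmod} transported through a single block-diagonal unitary. Accordingly, I would introduce
$$
\Omega := \begin{bmatrix} I_{H^2(\cF_*)} & 0 \\ 0 & \omega_{\rm{D,NF}} \end{bmatrix}:\begin{bmatrix} H^2(\cF_*) \\ \cQ_{T^*} \end{bmatrix} \to \begin{bmatrix} H^2(\cF_*) \\ \overline{\Delta_{\Theta_T} L^2(\cD_T)} \end{bmatrix},
$$
which is unitary because $\omega_{\rm{D,NF}}$ is. Everything then reduces to the two identities $V^{\rm{NF}}_i = \Omega\, V_i^D\, \Omega^*$ and $\Pi^{\rm{NF}} = \Omega\, \Pi^D$, after which both directions of the theorem are obtained by transporting the corresponding assertions of Theorem \ref{T:Dmod} through $\Omega$.

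For the forward direction I would first verify these two identities. The $(1,1)$-blocks of $V^{\rm{NF}}_i$ and $V_i^D$ agree verbatim, while the $(2,2)$-blocks are related by $W^{\rm{NF}}_i = \omega_{\rm{D,NF}} W_i \omega_{\rm{D,NF}}^*$, which is exactly the definition \eqref{unifunda}; hence block-diagonal conjugation by $\Omega$ carries $V_i^D$ to $V^{\rm{NF}}_i$, and comparing \eqref{GenPi} with \eqref{NFGenPi} gives $\Pi^{\rm{NF}} = \Omega\,\Pi^D$ at once. It follows that $\Pi^{\rm{NF}}$ is an isometry, being the composition of the isometry $\Pi^D$ with the unitary $\Omega$; that $(V^{\rm{NF}}_1, V^{\rm{NF}}_2)$ is a $q$-commuting isometric pair, since $(W^{\rm{NF}}_1, W^{\rm{NF}}_2)$ is again a $q$-commuting unitary pair and so Theorem \ref{T:qBCL} applies (equivalently, unitary conjugation preserves both properties); and that the lifting relation transports by
$$
(V^{\rm{NF}}_i)^* \Pi^{\rm{NF}} = \Omega\, (V_i^D)^* \Omega^* \Omega\, \Pi^D = \Omega\, (V_i^D)^* \Pi^D = \Omega\, \Pi^D T_i^* = \Pi^{\rm{NF}} T_i^*,
$$
where I use the intertwinings $(V_i^D)^*\Pi^D = \Pi^D T_i^*$ already established in the forward half of Theorem \ref{T:Dmod}.

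For the converse direction I would start from an arbitrary minimal isometric lift $(\bV_1, \bV_2)$ of $(T_1,T_2)$ via $\bPi$ and invoke the converse half of Theorem \ref{T:Dmod} to produce an And\^o tuple $(\cF_*, P_*, \Lambda_*, U_*)$ realizing $(\bV_1,\bV_2)$ as the Douglas pair $(V_1^D, V_2^D)$ with embedding of the form \eqref{GenPi}. Composing that unitary equivalence with $\Omega$ then exhibits $(\bV_1,\bV_2)$ in the form \eqref{NFmod} and the embedding in the form \eqref{NFGenPi}, as required. Here the canonicity results of Theorem \ref{T:cano} and Corollary \ref{uniquecano} are what guarantee that the unitary pair furnished by Theorem \ref{T:Dmod} is the canonical pair $(W_1, W_2)$, so that its $\omega_{\rm{D,NF}}$-conjugate is precisely $(W^{\rm{NF}}_1, W^{\rm{NF}}_2)$.

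I do not anticipate a genuine obstacle, since the whole argument is a transport; the only point requiring care is that the single unitary $\omega_{\rm{D,NF}}$ implementing the Douglas-to-Nagy--Foias passage for the \emph{product} contraction $T = T_1T_2$ simultaneously conjugates each individual canonical unitary $W_i$ onto $W^{\rm{NF}}_i$. This is immediate from the two facts that $\omega_{\rm{D,NF}}$ interacts with the unitary part only through $W_D = W_1W_2$ via \eqref{omega-DNF}, and that \eqref{unifunda} builds each $W^{\rm{NF}}_i$ by the \emph{same} conjugation; in particular $W^{\rm{NF}}_1 W^{\rm{NF}}_2 = \omega_{\rm{D,NF}} W_D \omega_{\rm{D,NF}}^* = M_\zeta|_{\overline{\Delta_{\Theta_T} L^2(\cD_T)}}$ and the $q$-commutation is preserved, so $\Omega$ is a bona fide unitary equivalence between the two models.
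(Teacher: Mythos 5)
Your proposal is correct and is essentially identical to the paper's own proof: the paper likewise introduces the block-diagonal unitary $\sbm{I & 0 \\ 0 & \omega_{\rm{D,NF}}}$, observes that it carries $(\Pi^D; V^D_1, V^D_2)$ onto $(\Pi^{\textrm{NF}}; V^{\rm{NF}}_1, V^{\rm{NF}}_2)$, and transports both directions of Theorem \ref{T:Dmod} through it. Your additional remarks on why the single unitary $\omega_{\rm{D,NF}}$ conjugates each $W_i$ to $W_i^{\rm{NF}}$ are a harmless (and slightly more explicit) elaboration of what the paper leaves implicit in definition \eqref{unifunda}.
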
 

	\begin{proof} Let us consider the unitary operator \(\tau:\sbm{H^2(\cF_*)\\ \cQ_{T^*}} \to \sbm{H^2(\cF_*)\\ \overline{\Delta_{\Theta_T } L^2(\cD_T)}}  \) defined by 
$$\tau h=\begin{bmatrix} I &0\\0& \omega_{\rm{D,NF}} \end{bmatrix}h.$$ Then it follows that $\Pi^{\textrm{NF}}=\tau \Pi^D $ 
 and $V^{\rm{NF}}_i =\tau V^D_i \tau^*$ for all \(i=1,2\). From the forward direction of the Theorem \ref{T:Dmod}, we conclude that $(\Pi^{\textrm{NF}}; V^{\rm{NF}}_1, V^{\rm{NF}}_2)$ is a $q$-commuting isometric lift of $(T_1,T_2)$. Thus  $(\Pi^{\textrm{NF}}; V^{\rm{NF}}_1, V^{\rm{NF}}_2)$ and  $(\Pi^D; V^D_1, V^D_2)$ are two unitarily equivalent \(q\)-commuting isometric lifts of $(T_1,T_2).$

The converse follows from the equivalance of the isometric lifts  $(\Pi^{\textrm{NF}}; V^{\rm{NF}}_1, V^{\rm{NF}}_2)$ and  $(\Pi^D; V^D_1, V^D_2)$ of the \(q\)-commutative contractive pair \((T_1,T_2)\) and the converse of the Theorem \ref{T:Dmod}.
	\end{proof}

	\section{\texorpdfstring{A Sz.-Nagy--Foias-type functional model}{A Sz.-Nagy--Foias-type functional model}}\label{S:FunctionalModel}

Let $T$ be a cnu contraction acting on a Hilbert space $\cH$, $\cK_{\Theta_T}$ be the space as in \eqref{KthetaT}, and $\Pi_{\rm{NF}}:\cH\to\cK_{\Theta_T}$ be the isometry as in \eqref{NF-IsoEmbed}. It was shown in \cite[Chapter VI]{Nagy-Foias} (see also \cite[Chapter 2]{BS-CUP}) that
\begin{align}\label{HthetaT}
\operatorname{Ran}\Pi_{\rm{NF}}= \begin{bmatrix}
H^2(\cD_{T^*})\\
\overline{\Delta_{\Theta_T } L^2(\cD_T)}
\end{bmatrix}
\ominus
\begin{bmatrix}
\Theta_T \\
\Delta_{\Theta_T}
\end{bmatrix}\cdot H^2(\cD_T)=:\cH_{\Theta_T}.
\end{align}Therefore, what follows is the functional model for a cnu contraction operator due to Sz.-Nagy and Foias. Indeed, a direct consequence of the intertwining \eqref{NF-Int}, it follows that the cnu contraction $T$ is unitarily equivalent to
$$
P_{\cH_{\Theta_T}}\begin{bmatrix}
M_z&0\\
0& M_\zeta
\end{bmatrix}|_{\cH_{\Theta_T}}
$$via the unitary operator $\Pi_{\rm{NF}}:\cH\to \cH_{\Theta_T}=\operatorname{Ran}\Pi_{\rm{NF}}$.

We shall denote by $(\cD,\cD_*,\Theta)$ a bounded analytic function $\Theta:\mathbb D\to\cB(\cD,\cD_*)$. The function $(\cD,\cD_*,\Theta)$ is said to be \textit{purely contractive } if $\Theta(z)$ is a contraction for every $z\in \bD$ and furthermore, $\|\Theta(0)f\|<\|f\|$ for every non-zero vector $f$. Given a purely contractive analytic function $(\cD,\cD_*,\Theta)$, consider the space
\begin{align}\label{Ktheta}
\cH_\Theta=\begin{bmatrix}
H^2(\cD_*)\\
\overline{\Delta_{\Theta} L^2(\cD)}
\end{bmatrix}
\ominus
\begin{bmatrix}
\Theta\\
\Delta_{\Theta}
\end{bmatrix}\cdot H^2(\cD),
\end{align}where $\Delta_\Theta$ is the same operator as $\Delta_{\Theta_T}$ with $\Theta_T$ replaced by $\Theta$. Then with the contraction operator $T_\Theta$ on $\cH_\Theta$ defined by
\begin{align}
T_\Theta=P_{\cH_{\Theta}}\begin{bmatrix}
M_z&0\\
0& M_\zeta
\end{bmatrix}|_{\cH_{\Theta}},
\end{align}
it is known (see \cite[Theorem VI.3.1]{Nagy-Foias}) that the characteristic function $\Theta_{T_\Theta}$ of the contraction $T_\Theta$ coincides with $\Theta$ in the sense described in \eqref{CharcCoin}. The goal of this section is to develop a parallel theory for $q$-commuting contractive pairs. This develops in steps.

\subsection{The fundamental operators}
A key role in obtaining the functional model is played by the operators guaranteed to exist in the result below.
	\begin{thm} \label{prof funda}
		Given a pair \((T_1, T_2)\) of \(q\)-commuting contractions, there is an (in general non-commuting) operator pair $(G_1,G_2)$ acting on $\cD_{T^*}$ so that the following operator equations are satisfied: 
		\begin{align} \label{funeq}
			D_{T^*} G_1 D_{T^*} = T^*_1 - T_2 T^* \quad \text{and} \quad D_{T^*} G_2 D_{T^*} = T^*_2 -q T_1T^*.
		\end{align}
Moreover, if $(\cF_*, \Lambda_*, P_*, U_*)$ is an And\^o tuple for $(T_1^*,T_2^*)$, then
		\begin{align}  \label{funda} 
			(G_1, G_2) := (\Lambda_*^* P_*^\perp U_* \Lambda_*, \Lambda_*^* U_*^* P_* \Lambda_*).
		\end{align}
\end{thm}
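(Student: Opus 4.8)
The plan is to take the explicit formula as the \emph{definition} of $(G_1,G_2)$ and then verify directly that it satisfies the two operator equations in \eqref{funeq}; since $G_1=\Lambda_*^* P_*^\perp U_* \Lambda_*$ and $G_2=\Lambda_*^* U_*^* P_* \Lambda_*$ are manifestly bounded operators on $\cD_{T^*}$ (being compositions of bounded operators), this simultaneously settles the existence assertion. The whole argument reduces to two inner-product computations, one per equation, assembled from the defining actions of $\Lambda_*$, $U_*$ and $P_*$ on the And\^o tuple of $(T_1^*,T_2^*)$.

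First I would record the algebraic identities that drive the computation. Since $(T_1,T_2)$ is $q$-commuting, so is $(T_1^*,T_2^*)$; moreover $T_2^*T_1^*=T^*$ while $T_1^*T_2^*=qT^*$, and $D_{qT^*}=D_{T^*}$ because $q$ is unimodular, so the And\^o tuple $(\cF_*,\Lambda_*,P_*,U_*)$ for $(T_1^*,T_2^*)$ genuinely lives over $\cD_{T^*}$. The only other facts needed are $D_{T_i^*}^2=I-T_iT_i^*$ together with the explicit formulas $\Lambda_* D_{T^*}h = D_{T_1^*}T_2^*h\oplus D_{T_2^*}h\oplus 0$ and $U_*\Lambda_* D_{T^*}h = D_{T_1^*}h\oplus D_{T_2^*}T_1^*h\oplus 0$.

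For the first equation I would compute, for $h,h'\in\cH$,
$$
\langle D_{T^*}G_1 D_{T^*}h,h'\rangle = \langle P_*^\perp U_*\Lambda_* D_{T^*}h,\ \Lambda_* D_{T^*}h'\rangle,
$$
and substitute the formulas above. Since $P_*^\perp$ annihilates the first component, this collapses to $\langle D_{T_2^*}^2 T_1^*h,h'\rangle = \langle(I-T_2T_2^*)T_1^*h,h'\rangle$, and using $T_2T_2^*T_1^*=T_2T^*$ the right-hand side is exactly $\langle(T_1^*-T_2T^*)h,h'\rangle$. The second equation is entirely analogous: transporting $U_*^*$ across the inner product gives $\langle P_*\Lambda_* D_{T^*}h,\ U_*\Lambda_* D_{T^*}h'\rangle = \langle D_{T_1^*}^2 T_2^*h,h'\rangle = \langle(I-T_1T_1^*)T_2^*h,h'\rangle$, and here the relation $T_1T_1^*T_2^*=qT_1T^*$ produces the factor $q$ in the right-hand side $T_2^*-qT_1T^*$.

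I do not expect a serious obstacle, as the content is a bookkeeping verification. The two points requiring care are, first, transporting $U_*$ and $U_*^*$ to the opposite side of the inner product so that one only ever invokes the clean formula for $U_*$ on $\operatorname{Ran}\Lambda_*$ rather than having to describe $U_*^*$ on stray vectors; and second, tracking where the unimodular constant $q$ enters — it appears solely through $T_1^*T_2^*=qT^*$ in the second computation and is absent from the first, matching the asymmetry between the two identities in \eqref{funeq}.
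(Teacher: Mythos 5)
Your proposal is correct and follows essentially the same route as the paper: define $(G_1,G_2)$ by the explicit formula \eqref{funda} and verify \eqref{funeq} by inner-product computations using the explicit actions of $\Lambda_*$, $U_*$, $P_*$ on the special And\^o tuple for $(T_1^*,T_2^*)$, with the factor $q$ entering exactly through $T_1^*T_2^*=qT^*$ as you indicate. The paper writes out only the first computation and declares the second ``similar,'' so your fully worked second computation is, if anything, slightly more complete.
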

		\begin{proof}		
			Once the existence is ensured, the uniqueness of the solutions $G_1,G_2$ is trivial. We prove the rest of the assertion in one stroke. Suppose $(\cF_*, \Lambda_*, P_*, U_*)$ is a special And\^o tuple for $(T_1^*,T_2^*)$, which is shown to exist in the discussion preceding Definition \ref{D:SpecialAndo}. Let $G_1,G_2$ be as in \eqref{funda}. Then for every $h,h'\in \cH$, we compute
			\begin{align*}
				\langle D_{T^*} G_1D_{T^*}h,h'\rangle=\langle P_*^\perp U_* \Lambda_* D_{T^*}h, \Lambda_* D_{T^*} h'\rangle
				&=\langle 0\oplus D_{T^*_2}T^*_1h,D_{T^*_1}T^*_2h'\oplus D_{T^*_2}h'\rangle \\
				&=\langle D^2_{T^*_2}T^*_1h,h'\rangle \\
				&=\langle (T^*_1 -T_2 T^*)h,h'\rangle.
			\end{align*}
			A similar computation, utilizing the properties of the And\^o tuple, establishes the second equality.
		\end{proof}
Considering the heavy role of the operators $G_1,G_2$ in what follows, it is convenient to give them a name for quick reference.
	\begin{definition}\label{D:Fund}
		The unique pair \((G_1, G_2)\) associated to a $q$-commuting contractive pair as in Theorem \ref{prof funda} will be called the \textit{fundamental operators} for \((T^*_1, T^*_2)\).
	\end{definition}
The first main result of this section is as follows.
\begin{thm}\label{T:FunctMod}
Let $(T_1,T_2)$ be a $q$-commuting contractive pair with $(G_1,G_2)$ the fundamental operator pair for $(T_1^*,T_2^*)$. Let $T=T_1T_2$ be a cnu contraction and $\cH_{\Theta_T}$ be the space as in \eqref{HthetaT}. Then $(T_1,T_2)$ is unitarily equivalent to
$$
P_{\cH_{\Theta_T}}\left( 
\begin{bmatrix}
			M_{G_1^* + z G_2}R_q&0\\ 0&W^{\textrm{NF}}_{1}
			\end{bmatrix},
			\begin{bmatrix}
			R_{\oq}M_{G_2^*+zG_1}&0\\
			0&W^{\textrm{NF}}_{2}
			\end{bmatrix}
\right)|_{\cH_{\Theta_T}}
$$via the map $\Pi_{\rm NF}:\cH\to \operatorname{Ran}\Pi_{\rm NF}$, where $\Pi_{\rm NF}$ is as in \eqref{NF-IsoEmbed}. 
\end{thm}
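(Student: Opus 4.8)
The plan is to realize each $T_j$ as a compression to $\cH_{\Theta_T}=\operatorname{Ran}\Pi_{\rm NF}$ of the corresponding model operator in the statement, and the key point is that Corollary \ref{intphiD} has already carried out the only genuinely computational step. That corollary records that the Douglas embedding $\Pi_D:\cH\to\sbm{H^2(\cD_{T^*})\\ \cQ_{T^*}}$ satisfies $\widetilde V_j^*\,\Pi_D=\Pi_D T_j^*$ for $j=1,2$, where
$$
\widetilde V_1=\begin{bmatrix} M_{G_1^*+zG_2}R_q & 0\\ 0 & W_1\end{bmatrix},\qquad \widetilde V_2=\begin{bmatrix} R_{\oq}M_{G_2^*+zG_1}&0\\ 0&W_2\end{bmatrix}.
$$
Indeed, the two operators displayed in Corollary \ref{intphiD} are exactly $\widetilde V_1^*$ and $\widetilde V_2^*$, since $(M_{G_1^*+zG_2}R_q)^*=R_{\oq}M^*_{G_1^*+zG_2}$ and $(R_{\oq}M_{G_2^*+zG_1})^*=M^*_{G_2^*+zG_1}R_q$. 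Thus $\Pi_D$ already intertwines $(T_1^*,T_2^*)$ with the adjoints of operators built from the fundamental pair $(G_1,G_2)$ on the Douglas space.

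The second step is to transport this intertwining from the Douglas space to the Sz.-Nagy--Foias space $\cK_{\Theta_T}$ through the unitary $\Omega:=\sbm{I&0\\0&\omega_{\rm{D,NF}}}$, where $\omega_{\rm{D,NF}}:\cQ_{T^*}\to\overline{\Delta_{\Theta_T}L^2(\cD_T)}$ is the unitary of \eqref{omega-DNF}. By the definition \eqref{NF-IsoEmbed} of $\Pi_{\rm NF}$ one has $\Omega\Pi_D=\Pi_{\rm NF}$, and by the definition \eqref{unifunda} of $W_j^{\textrm{NF}}=\omega_{\rm{D,NF}}W_j\omega_{\rm{D,NF}}^*$ one has $\Omega\widetilde V_j\Omega^*=V_j$, where $V_1,V_2$ denote precisely the two model operators in the statement. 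Conjugating the intertwinings $\widetilde V_j^*\Pi_D=\Pi_D T_j^*$ by $\Omega$ therefore yields
$$
V_j^*\,\Pi_{\rm NF}=\Pi_{\rm NF}\,T_j^*,\qquad j=1,2.
$$

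Finally I would read off the compression. Taking adjoints in the last display and using that $\Pi_{\rm NF}$ is an isometry gives $\Pi_{\rm NF}^* V_j\,\Pi_{\rm NF}=T_j$. Writing $\Pi_{\rm NF}=\iota u$ with $u:\cH\to\operatorname{Ran}\Pi_{\rm NF}$ the induced unitary and $\iota$ the inclusion of $\operatorname{Ran}\Pi_{\rm NF}$ into $\cK_{\Theta_T}$, and recalling from \eqref{HthetaT} that $\operatorname{Ran}\Pi_{\rm NF}=\cH_{\Theta_T}$, the identity rewrites as $T_j=u^*\big(P_{\cH_{\Theta_T}}V_j|_{\cH_{\Theta_T}}\big)u$. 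Hence each $T_j$ is unitarily equivalent, via $u=\Pi_{\rm NF}$, to $P_{\cH_{\Theta_T}}V_j|_{\cH_{\Theta_T}}$, which is exactly the assertion. Note that no isometry property of the $V_j$ themselves is required: only the intertwining, the isometry of $\Pi_{\rm NF}$, and the range identification \eqref{HthetaT} are used.

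There is no serious obstacle here, precisely because Corollary \ref{intphiD} and Theorem \ref{T:NFmod} have already isolated the two nontrivial ingredients---the rewriting of the Douglas shift data through the fundamental operators, and the transfer unitary $\omega_{\rm{D,NF}}$ to the characteristic-function space. The only points that demand care are bookkeeping: dualizing the rotation-and-shift operators correctly (the adjoint identities above), and keeping the subscript embedding $\Pi_{\rm NF}$ into $H^2(\cD_{T^*})$ distinct from the superscript embedding $\Pi^{\textrm{NF}}$ into $H^2(\cF_*)$, so that the first block genuinely carries $M_{G_1^*+zG_2}R_q$ rather than $M_{U_*^*(P_*^\perp+zP_*)}R_q$.
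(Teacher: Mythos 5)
Your proposal is correct and follows essentially the same route as the paper: invoke Corollary \ref{intphiD} for the intertwinings $\widetilde V_j^*\Pi_D=\Pi_D T_j^*$ on the Douglas space, conjugate by the unitary $\sbm{I&0\\0&\omega_{\rm{D,NF}}}$ (the paper's $U_{\rm NF}$, your $\Omega$) to obtain $V_j^*\Pi_{\rm NF}=\Pi_{\rm NF}T_j^*$, and read off the compression using $\operatorname{Ran}\Pi_{\rm NF}=\cH_{\Theta_T}$. If anything, your write-up is slightly more explicit than the paper's at the final step (taking adjoints and factoring $\Pi_{\rm NF}$ through the inclusion), which the paper leaves implicit.
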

\begin{proof} 
Let us consider the operator
		\[
		U_{\textrm{NF}} =\begin{bmatrix} I_{H^2(\mathcal{D}_{T^*})} & 0 \\ 0 &  \omega_{\rm{D,NF}}\end{bmatrix}\begin{bmatrix}H^2(\mathcal{D}_{T^*}) \\ \cQ_{T^*} \end{bmatrix} \to \begin{bmatrix} H^2(\mathcal{D}_{T^*}) \\ \overline{\Delta_{\Theta_T} L^2(\mathcal{D}_T)} \end{bmatrix},
		\]
where the unitary $\omega_{\rm{D,NF}}$ is as discussed in the paragraph leading to \eqref{omega-DNF}. Then, \( U_{\textrm{NF}} \) is a unitary operator and satisfies the following relations:
		
		\begin{align}\label{DNF}
			U_{\textrm{NF}} V_D = V_{\textrm{NF}} U_{\textrm{NF}} \quad \text{and} \quad U_{\textrm{NF}} \Pi_D = \Pi_{\text{NF}}.
		\end{align}
		In accordance with the definition of \( (G_1, G_2) \) provided in \eqref{funeq}, Corollary  \ref{intphiD} yields the following intertwining relations:
		
		\begin{align}\label{PhiD}
			\Pi_D T_1^* = \begin{bmatrix}  M_{G^*_1 + zG_2} R_q & 0\\ 0& W_1 \end{bmatrix}^* \Pi_D \quad \text{and}\quad \Pi_D T_2^* = \begin{bmatrix} R_{\oq} M_{G^*_2 + zG_1} &0\\0& W_1 \end{bmatrix} ^* \Pi_D.
		\end{align}
		
		Then the following intertwining relations are derived by applying \eqref{DNF} and \eqref{PhiD}.
		
		\begin{align} \label{PhiNF}
			\Pi_{\text{NF}}T^*_1=\begin{bmatrix} M_{G^*_1 + zG_2} R_q &0\\0& W^{\textrm{NF}}_1 \end{bmatrix}^* \Pi_{\text{NF}} \quad \text{and} \quad  \Pi_{\text{NF}}T^*_2=\begin{bmatrix} R_{\oq} M_{G^*_2 + zG_1} &0\\0& W^{\textrm{NF}}_2 \end{bmatrix}^* \Pi_{\text{NF}}.
		\end{align}
		
		The relations in \eqref{PhiNF} complete the proof of the theorem.  
\end{proof}

\subsection{Pseudo $q$-commuting contractive lift}
One major advantage of the isometric lift for a single contraction is that minimal isometric lifts are unique up to isomorphism. In contrast to the classical case, a pair of \(q\)-commuting contractions may have two minimal isometric lifts that are not unitarily equivalent. The following example highlights this difficulty.
\begin{example}\label{nonisolifts}
Let \((T_1,T_2)=(0,0)\) on \(\mathbb{C}\). Then consider two pairs of \(q\)-commuting isometries \((R_qM_z, M_z)\) and \((R_{q}M_{z_1}, M_{z_2})\) on \(H^2(\mathbb{D})\) and \(H^2(\mathbb{D}^2)\), respectively. Then it is easy to see that both the pairs are minimal \(q\)-commuting isometric lifts of \((T_1,T_2)\), where the corresponding standard inclusion serves as the isometric embedding. However, they are not unitarily equivalent, since the second pair is doubly \(q\)-commuting i.e., $M_{z_2} (R_qM_{z_1})^*=q (R_qM_{z_1})^*M_{z_2}$, whereas the first pair is not.

\end{example}
To remedy this situation, we introduce the notion of a pseudo \(q\)-commuting contractive lift. It then follows that this notion of isometric lift serves as a correct analogue to the classical case, as we show that any two pseudo \(q\)-commuting contractive lifts of a pair of \(q\)-commuting contractions are unitarily equivalent. This will be useful in developing the functional model.
	\begin{definition}\label{pseudo triple}
		Let \(({W}_1, {W}_2, {W})\) be a triple of operators on a Hilbert space \(\cK\). We say that \(({W}_1, {W}_2, {W})\) is a \textit{pseudo \(q\)-commuting contractive operator triple} if:
		
		\begin{enumerate}[label=\roman*)]
			\item \({W}_1\) and \({W}_2\) are contractions, and \({W}\) is an isometry.
			\item \(({W}_1, {W})\) and \(({W}_2, {W})\) are \(q\)-commuting and \(\oq\)-commuting, respectively.
			\item \({W}_1 = \oq {W}_2^* {W}\).
		\end{enumerate}
		Furthermore, we say that a pseudo $q$-commuting contractive triple $({W}_1, {W}_2, {W})$ is a  lift of $(T_1, T_2, T_1T_2)$ via an isometric embedding $\Pi:\cH\to\cK$ if
		\begin{enumerate}[label=\roman*)]
			\item $(\Pi, {W}_1, {W}_2, {W})$ is a lift of $(T_1, T_2, T = T_1 T_2)$, i.e., 
			\[
			\Pi(T_1^*, T_2^*, T_2^* T_1^*) = ({W}_1^*, {W}_2^*, {W}^*) \Pi.
			\]
			
			\item $(\Pi, {W})$ is a minimal isometric lift for $T = T_1 T_2$, i.e.,
			\[
			\cK = \bigvee_{n \geq 0} {W}^n \operatorname{Ran} \Pi.
			\]
		\end{enumerate}
	\end{definition}
	
	\begin{thm}[Douglas-model pseudo $q$-commuting contractive lift]\label{D:mod unique lift}
		Let \((T_1, T_2)\) be a pair of \(q\)-commuting contractions on a Hilbert space \(\mathcal{H}\). Consider the Douglas isometric-lift model space \(\mathcal{K}_D = \begin{bmatrix} H^2(\mathcal{D}_{T^*}) \\ \cQ_{T^*} \end{bmatrix}\) for \(T\) and the Douglas isometric embedding \(\Pi_D = \begin{bmatrix} \cO_{\mathcal{D}_{T^*}, T^*} \\ Q_{T^*} \end{bmatrix}\). Define the operators \(\mathbb{W}^D_1\), \(\mathbb{W}^D_2\), and \(V_D\) on \(\mathcal{K}_D\) as follows:
		
		\[
		(\mathbb{W}^D_1, \mathbb{W}^D_2, V_D) = \left( \begin{bmatrix} M_{G_1^* + z G_2} R_q & 0 \\ 0 & W_1 \end{bmatrix}, 
		\begin{bmatrix} R_{\oq} M_{G_2^* + z G_1} & 0 \\ 0 & W_2 \end{bmatrix}, 
		\begin{bmatrix} M_z & 0 \\ 0 & W_D \end{bmatrix} \right),
		\]
		where \((G_1, G_2)\) are the fundamental operators of \((T_1^*, T_2^*)\), and \((W_1, W_2)\) is the canonical \(q\)-commuting unitary pair. Then, \((\Pi_D, \mathbb{W}^D_1, \mathbb{W}^D_2, V_D)\) is a pseudo \(q\)-commuting contractive lift of \((T_1, T_2)\). Furthermore, any pseudo \(q\)-commuting contractive lift of the form  \((\Pi_D, \mathbb{W}_1, \mathbb{W}_2, V_D)\) satisfies the property that \((\mathbb{W}_1, \mathbb{W}_2) = (\mathbb{W}^D_1, \mathbb{W}^D_2)\).

	\end{thm}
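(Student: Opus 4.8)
The plan is to prove Theorem \ref{D:mod unique lift} in two parts: first verify that the specified quadruple is a pseudo $q$-commuting contractive lift, and then establish the uniqueness of $(\mathbb{W}_1,\mathbb{W}_2)$ among all lifts of the prescribed form. For the existence part, I would check the three axioms of Definition \ref{pseudo triple} directly from the block matrix formulas. The contractivity of $\mathbb{W}^D_1$ and $\mathbb{W}^D_2$ follows because the upper-left blocks are built from the purely contractive characteristic-function data via $(G_1,G_2)$ and the lower-right blocks $W_1,W_2$ are unitaries; the isometry property of $V_D=\operatorname{diag}(M_z,W_D)$ is immediate. The $q$- and $\oq$-commutation relations $\mathbb{W}^D_1 V_D = q V_D \mathbb{W}^D_1$ and $\mathbb{W}^D_2 V_D = \oq V_D \mathbb{W}^D_2$ reduce to computations with the rotation operators $R_q$ on the $H^2$ component (using $M_z R_q = q R_q M_z$ and the analogous identities) together with the $q$-commutation of $(W_1,W_D)$ and $\oq$-commutation of $(W_2,W_D)$, which hold because $W_1W_2=qW_2W_1$ and $W_D=W_1W_2$. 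The relation $\mathbb{W}^D_1 = \oq (\mathbb{W}^D_2)^* V_D$ is exactly the structural identity visible from the $q$-BCL model form \eqref{qiso-model2}, or equivalently follows from the defining relations of the fundamental operators. Finally, the lifting property $\Pi_D(T_1^*,T_2^*,T^*) = ((\mathbb{W}^D_1)^*,(\mathbb{W}^D_2)^*,V_D^*)\Pi_D$ is precisely the content of Corollary \ref{intphiD} for the first two entries and Theorem \ref{T:1-D} for the third, while minimality of $(\Pi_D,V_D)$ for $T$ is Theorem \ref{T:1-D}.

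For the uniqueness part, suppose $(\Pi_D,\mathbb{W}_1,\mathbb{W}_2,V_D)$ is another pseudo $q$-commuting contractive lift with the \emph{same} embedding $\Pi_D$ and \emph{same} isometry $V_D$. The key observation is that $\mathbb{W}_2$ is completely determined by the lifting and $q$-commutation constraints. First I would use that $(\mathbb{W}_2,V_D)$ is $\oq$-commuting together with the intertwining $\Pi_D T_2^* = \mathbb{W}_2^* \Pi_D$ to pin down $\mathbb{W}_2^*$ on the dense subspace $\bigvee_{n\ge0} V_D^n \operatorname{Ran}\Pi_D = \mathcal{K}_D$ (this span is all of $\mathcal{K}_D$ by minimality). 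Explicitly, for $h\in\cH$ and $n\ge0$, the $\oq$-commutation gives $\mathbb{W}_2^* V_D^n \Pi_D h = \oq^{\,n} V_D^n \mathbb{W}_2^* \Pi_D h = \oq^{\,n} V_D^n \Pi_D T_2^* h$, so $\mathbb{W}_2^*$ is forced to agree on a spanning set, hence $\mathbb{W}_2$ is uniquely determined; this shows $\mathbb{W}_2 = \mathbb{W}^D_2$. Then axiom (iii), $\mathbb{W}_1 = \oq \mathbb{W}_2^* V_D$, immediately forces $\mathbb{W}_1 = \oq (\mathbb{W}^D_2)^* V_D = \mathbb{W}^D_1$, completing the uniqueness argument.

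The main obstacle I anticipate is the $\mathbb{W}_2^*$-determination step: one must be careful that the formula $\mathbb{W}_2^* V_D^n \Pi_D h = \oq^{\,n} V_D^n \Pi_D T_2^* h$ genuinely defines a \emph{well-defined bounded} operator on the spanning set, i.e.\ that it respects the linear relations among the vectors $V_D^n\Pi_D h$, rather than merely computing $\mathbb{W}_2^*$ pointwise on those vectors. Since we are \emph{given} that such a lift $\mathbb{W}_2$ exists, well-definedness is automatic, and the computation above shows any two candidates agree on a dense set and are bounded, hence equal; so the argument is really a consistency check rather than a construction. A secondary subtlety is verifying axiom (iii) as an honest operator identity for the model triple $(\mathbb{W}^D_1,\mathbb{W}^D_2,V_D)$ rather than merely on $\operatorname{Ran}\Pi_D$; this is where I would invoke the explicit symbol-level identity relating the multipliers $M_{G_1^*+zG_2}R_q$ and $R_{\oq}M_{G_2^*+zG_1}$ under multiplication by $M_z$, using $\oq M_{G_2^*+zG_1}^* R_{\oq}^* \cdot M_z = $ the symbol of $\mathbb{W}^D_1$ on the $H^2$ block and the relation $W_1 = \oq W_2^* W_D$ on the $\cQ_{T^*}$ block, the latter being a direct consequence of $W_1W_2 = qW_2W_1 = W_D$.
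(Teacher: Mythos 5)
Your treatment of the forward direction matches the paper's: direct verification of the axioms of Definition \ref{pseudo triple} using $R_qM_z=qM_zR_q$, the operator identity $M^*_{G_2^*+zG_1}M_z=M_{G_1^*+zG_2}$, and $W_2^*W_D=qW_1$, together with Corollary \ref{intphiD} and Theorem \ref{T:1-D} for the lifting and minimality statements. That half is fine.

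The uniqueness half contains a genuine gap at its central step. You assert that the $\oq$-commutation of $(\mathbb{W}_2,V_D)$ yields $\mathbb{W}_2^*V_D^n\Pi_Dh=\oq^{\,n}V_D^n\mathbb{W}_2^*\Pi_Dh$. The hypothesis is $\mathbb{W}_2V_D=\oq V_D\mathbb{W}_2$, whose adjoint is $V_D^*\mathbb{W}_2^*=q\mathbb{W}_2^*V_D^*$: this lets you move $\mathbb{W}_2^*$ past powers of $V_D^*$, not past powers of $V_D$. To move $\mathbb{W}_2^*$ past $V_D$ you would need $\mathbb{W}_2^*V_D=\oq V_D\mathbb{W}_2^*$, equivalently $V_D^*\mathbb{W}_2=q\mathbb{W}_2V_D^*$, a doubly-$q$-commuting-type condition that is not among the axioms and, since $V_D$ is a proper (non-unitary) isometry, does not follow from them (Example \ref{nonisolifts} shows this distinction is substantive). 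In fact your identity is false for the model quadruple itself: by the computation in the existence part, $\mathbb{W}_2^{D*}V_D=q\mathbb{W}_1^D$, so for $n=1$ your formula would read $q\mathbb{W}_1^D\Pi_Dh=\oq V_D\Pi_DT_2^*h$; comparing constant terms in the $H^2(\cD_{T^*})$ component, the left-hand side has constant term $qG_1^*D_{T^*}h$ while the right-hand side has constant term $0$, and $G_1\neq 0$ in general. Since the model quadruple is a pseudo $q$-commuting contractive lift, no genuine consequence of the axioms can fail for it, so the identity is not derivable. (The subtlety you flagged, well-definedness on the spanning set, is not the real issue; the formula itself is wrong.) This is why the paper's uniqueness proof is necessarily more elaborate: it writes $\mathbb{W}_1,\mathbb{W}_2$ as $2\times2$ blocks, kills the off-diagonal blocks with Lemma \ref{L:Zero}, identifies the $(1,1)$ blocks as $M_{\varphi_1}R_q$ and $M_{\varphi_2}R_{\oq}$ via Proposition \ref{q-commutant}, uses $\mathbb{W}_1=\oq\mathbb{W}_2^*V_D$ to force the symbols to be linear pencils, shows via the lifting relations that the pencil coefficients satisfy the fundamental equations \eqref{funeq} and hence equal $(G_1,G_2)$ by uniqueness, and finally invokes Corollary \ref{uniquecano} for the $(2,2)$ blocks.
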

	\begin{proof}
		We begin by showing that \((\mathbb{W}^D_1, \mathbb{W}^D_2, V_D)\) forms a pseudo \(q\)-commuting contractive triple on \(\cK_D\). It is evident that \(\mathbb{W}^D_1\) and \(\mathbb{W}^D_2\) are contractions, while \(V_D\) is an isometry. The $q$-commutativity of the pair $(\mathbb{W}^D_1, V_D)$ holds as follows:
		\begin{align*}
			\mathbb{W}^D_1 V_D=\begin{bmatrix}  M_{G_1^* + z G_2} R_q & 0 \\ 0 & W_1 \end{bmatrix} \begin{bmatrix} M_z & 0 \\ 0 & W_D \end{bmatrix} &= \begin{bmatrix}  M_{G_1^* + z G_2} R_q M_z& 0 \\ 0 & W_1W_D \end{bmatrix}\\
			&= q \begin{bmatrix}  M_{G_1^* + z G_2}M_z R_q & 0 \\ 0 & W_DW_1 \end{bmatrix}\\
			&=q\begin{bmatrix} M_z & 0 \\ 0 & W_D \end{bmatrix}\begin{bmatrix}  M_{G_1^* + z G_2} R_q & 0 \\ 0 & W_1 \end{bmatrix}\\
			&=q V_D\mathbb{W}^D_1.
		\end{align*}
		
		A similar computation shows that \((\mathbb{W}^D_2, V_D)\) is \(\oq\)-commuting. The third condition of the definition can be derived as follows:
		\begin{align*}
			\mathbb{W}^{D*}_2V_D=\begin{bmatrix} R_{\oq} M_{G_2^* + z G_1} & 0 \\ 0 & W_2 \end{bmatrix}^* \begin{bmatrix} M_z & 0 \\ 0 & W_D \end{bmatrix} &= \begin{bmatrix}  M^*_{G_2^* + z G_1}R_q M_z& 0 \\ 0 & W^*_2 W_D \end{bmatrix}\\
			&=  \begin{bmatrix}  q M^*_{G_2^* + z G_1}M_z R_q & 0 \\ 0 & W^*_2W_1W_2 \end{bmatrix}\\
			&=q\begin{bmatrix}  M_{G_1^* + z G_2} R_q & 0 \\ 0 & W_1 \end{bmatrix}\\
			&=q \mathbb{W}^D_1.
		\end{align*}
		
		Hence, \((\mathbb{W}^D_1, \mathbb{W}^D_2, V_D)\) is a pseudo \(q\)-commuting contractive triple on \(\cK_D\). Then it follows from Corollary \ref{intphiD} that \((\Pi_D, \mathbb{W}^D_1, \mathbb{W}^D_2, V_D)\) is a pseudo \(q\)-commuting contractive lift of \((T_1, T_2)\).

		To prove the second part, let \((\Pi_D, \mathbb{W}_1, \mathbb{W}_2, V_D)\) be a pseudo \(q\)-commuting contractive lift of \((T_1, T_2)\). By definition, \((\mathbb{W}_1, \mathbb{W}_2, V_D)\) is a pseudo \(q\)-commuting contractive triple on \(\mathcal{K}_D\). We express \(\mathbb{W}_1\) and \(\mathbb{W}_2\) as \(2 \times 2\) block matrices with respect to the decomposition \(\mathcal{K}_D = H^2(\mathcal{D}_{T^*}) \oplus \cQ_{T^*}\), as follows:
		
		\[
		\mathbb{W}_j = 
		\begin{bmatrix} 
			\mathbb{W}_{j,11} & \mathbb{W}_{j,12} \\
			\mathbb{W}_{j,21} & \mathbb{W}_{j,22}
		\end{bmatrix}, 
		\quad \text{for } j = 1, 2.
		\]
		
		From Definition~\ref{pseudo triple} of a pseudo \(q\)-commuting contractive triple, we derive the following:
		
		The \(q\)-commutativity of the pair \((\mathbb{W}_{1}, V_D)\) gives the relation  
		\begin{align} \label{pseudo2}
			\begin{bmatrix}
				\mathbb{W}_{1,11}M_z & \mathbb{W}_{1,12}W_D \\
				\mathbb{W}_{1,21}M_z & \mathbb{W}_{1,22}W_D
			\end{bmatrix}
			= q
			\begin{bmatrix}
				M_z \mathbb{W}_{1,11} & M_z \mathbb{W}_{1,12} \\
				W_D \mathbb{W}_{1,21} & W_D \mathbb{W}_{1,22}
			\end{bmatrix}.
		\end{align}
		
		Focusing on the \((1,2)\)-entry of this matrix identity, we obtain  
		\[
		\mathbb{W}_{1,12} \oq W_D = M_z \mathbb{W}_{1,12}.
		\]  
		This shows that the bounded operator \(\mathbb{W}_{1,12}\) intertwines the unitary operator \(\oq W_D\) with the unilateral shift \(M_z\). Then by Lemma \ref{L:Zero}, it follows that \(\mathbb{W}_{1,12} = 0\). An analogous argument, using the \(\oq\)-commutativity condition of the pair $(\mathbb{W}_2,V_D)$, yields \(\mathbb{W}_{2,12} = 0\).

		Applying the third condition, \(\mathbb{W}_1 = \oq \mathbb{W}^*_2 V_D\), from Definition~\ref{pseudo triple}, we obtain:
		
		\begin{align}\label{pseudo3}
			\begin{bmatrix}
				\mathbb{W}_{1,11} & 0 \\
				\mathbb{W}_{1,21} & \mathbb{W}_{1,22}
			\end{bmatrix}
			= \oq
			\begin{bmatrix}
				\mathbb{W}^*_{2,11} M_z & \mathbb{W}^*_{2,21} W_D \\
				0 & \mathbb{W}^*_{2,22} W_D
			\end{bmatrix}.
		\end{align}
		
		Upon compairing the entries at $(2,1)$ and $(1,2)$, we conclude that $\mathbb{W}_{1,21}=0$ and $\mathbb{W}_{2,21}=0$, respectively. Hence, the matrix \(\mathbb{W}_j\) simplifies to the following diagonal form:
		
		\[
		\mathbb{W}_j = 
		\begin{bmatrix} 
			\mathbb{W}_{j,11} & 0 \\
			0 & \mathbb{W}_{j,22}
		\end{bmatrix}, 
		\quad \text{for } j = 1, 2.
		\]
		
		Examining the \((1,1)\)-entry of the matrix equation \eqref{pseudo2}, we arrive at the relation:  
		\[
		\mathbb{W}_{1,11} M_z = q M_z \mathbb{W}_{1,11}.
		\]
		
		A corresponding matrix equation, derived from the \(\oq\)-commutativity condition for the pair \((\mathbb{W}_2, V_D)\), yields a similar identity:  
		\[
		\mathbb{W}_{2,11} M_z = \oq M_z \mathbb{W}_{2,11}.
		\]

		As a consequence of  Proposition \ref{q-commutant}, the block \( W_{j,11} \) can be expressed in the specific forms \( \mathbb{W}_{1,11} = M_{\varphi_1} R_q \) and \( \mathbb{W}_{2,11} = M_{\varphi_2} R_{\oq} \), where \( \varphi_1, \varphi_2 \in H^{\infty}(\mathcal{B}(\mathcal{D}_{T^*})) \). Let us consider the Taylor expansion of the contractive analytic functions $\varphi_j$ on $\mathbb{D}$ as 
		\[
		\varphi_j(z) = \sum_{k \ge 0} \varphi_{j,k} z^k \quad \text{for } j = 1, 2,
		\]  
		where each \(\varphi_{j,k}\) is a bounded operator on \(\mathcal{D}_{T^*}\). Comparing the \((1,1)\)-entry of \eqref{pseudo3}, we obtain \(\mathbb{W}_{1,11} = \oq \mathbb{W}_{2,11}^* M_z\), which in turn implies \(M_{\varphi_1} = \oq R_q M_{\varphi_2}^* M_z\) and the resulting relations among the Taylor coefficients are obtained as follows:
		\begin{align}\label{linpencil}
			\sum_{k \ge 0} \varphi_{1,k} z^k=M_{\varphi_1}(1) = \oq R_q M_{\varphi_2}^* M_z(1) 
			= \oq R_q \sum_{k \ge 0} \varphi^*_{2,k} \bar{z}^{(k-1)}=\sum_{k \ge 0} \varphi^*_{2,k} \oq^{k}\bar{z}^{(k-1)}.
		\end{align}
		
		By equating the coefficients in \eqref{linpencil}, we find that \(\varphi_{1,0}=\oq\varphi^*_{2,1}\), 
		\(\varphi_{1,1}=\varphi^*_{2,0}\) and \(\varphi_{j,k} = 0\) for \(j = 1, 2\) and for all \(k \geq 3\). Let us define a pair $(G_1,G_2)$ of bounded operators on $\cD_{T^*}$ by $G_1=\varphi^*_{1,0}$ and $G_2=\varphi_{1,1}$. Then the pair \((\varphi_1(z), \varphi_2(z))\) takes the form \((G_1^* + zG_2,\; G_2^* + \oq zG_1)\), which leads to the following form of \(\mathbb{W}_{1,11}\) and \(\mathbb{W}_{2,11}\):

		\begin{align}\label{phi1,phi2}
			\begin{cases}
				\mathbb{W}_{1,11} = M_{\varphi_1} R_q = M_{G^*_1 + zG_2} R_q, \\
				\begin{aligned}
					\mathbb{W}_{2,11} = M_{\varphi_2} R_{\oq} = M_{G^*_2 + \oq z G_1} R_{\oq} 
					= M_{\oq z} R_{\oq} \otimes G_1 \oplus R_{\oq} \otimes G^*_2 
					&= R_{\oq} M_z \otimes G_1 \oplus R_{\oq} \otimes G^*_2 \\
					&= R_{\oq} M_{G^*_2 + zG_1}.
				\end{aligned}
			\end{cases}\
		\end{align}

		To conclude the proof, we show that the pair \((G_1, G_2)\) consists of the fundamental operators, and \((\mathbb{W}_{1,22}, \mathbb{W}_{2,22})\) is the canonical \(q\)-commuting unitary pair for \((T_1^*, T_2^*)\).

		Since \((\Pi_D, \mathbb{W}_1, \mathbb{W}_2, V_D)\) is a pseudo \(q\)-commuting contractive lift of \((T_1, T_2, T)\), we have the following:
		
		\begin{align}\label{psedolift}
			{(\mathbb{W}_1,\mathbb{W}_2,V_D)}^* \Pi_D = \Pi_D (T_1, T_2,T)^*.
		\end{align}
		
		By combining the results of equations \eqref{phi1,phi2} and \eqref{psedolift}, along with the pseudo \( q \)-commutativity of the triple \( (\mathbb{W}_1, \mathbb{W}_2, V_D) \), we derive the following expression:

		\begin{align}\label{pseudofunda1}
			T^*_1-T_2T^*=\Pi^*_D\Pi_D(T^*_1-T_2T^*)
			&=\Pi^*_D \mathbb{W}^*_1\Pi_D-T_2\Pi^*_D\Pi_DT^*\\
			&=\Pi^*_D \mathbb{W}^*_1\Pi_D-\Pi^*_D\mathbb{W}_2V^*_D\Pi_D\notag\\
			&=\Pi^*_D( \mathbb{W}^*_1-\mathbb{W}^*_1 V_DV^*_D)\Pi_D\notag\\
			&=\Pi^*_D\mathbb{W}^*_1 (I- V_DV^*_D)\Pi_D\notag\\
			&=\Pi^*_D \begin{bmatrix} R_{\oq} M^*_{G^*_1 + zG_2}(I-M_zM^*_z) &0\\0&  0  \end{bmatrix}\Pi_D\notag\\
			&=\Pi^*_D \begin{bmatrix} R_{\oq}(I-M_zM^*_z)\otimes G_1 &0\\0&  0  \end{bmatrix}\Pi_D\notag\\
			&=\cO^*_{D_{T^*},T^*}(R_{\oq}(I-M_zM^*_z)\otimes G_1)\cO_{D_{T^*},T^*}.\notag
		\end{align}

		Based on the definitions of the observability operator \(\cO_{D_{T^*},T^*}\) and its adjoint, which are given by  
		\(
		\cO_{D_{T^*},T^*}h = \sum_{n \geq 0} z^n D_{T^*} {T^*}^n h,
		\) 
		and  
		\(
		\mathcal{O}^*_{D_{T^*},T^*} \left( \sum_{n \geq 0} h_n z^n \right) = \sum_{n \geq 0} T^n D_{T^*} h_n,
		\) 
		equation \eqref{pseudofunda1} can be rewritten in the following form:

		\begin{align}\label{pseudofunda2}
			(T^*_1-T_2T^*)h&=\cO^*_{D_{T^*},T^*}(R_{\oq}(I-M_zM^*_z)\otimes G_1)\cO_{D_{T^*},T^*}h\\
			&=\cO^*_{D_{T^*},T^*}(R_{\oq}(I-M_zM^*_z)\otimes G_1)\sum_{n\ge 0}z^n D_{T^*}{T^*}^nh\notag\\
			&=\cO^*_{D_{T^*},T^*} G_1 D_{T^*}h= D_{T^*}G_1 D_{T^*}h.\notag
		\end{align}

		By a similar line of reasoning, we arrive at the following equation:
		
		\begin{align}\label{pseudofunda3}
			T^*_2 - qT_1T^* 
				&= D_{T^*} G_2 D_{T^*}.
			\end{align}
			
			Since \((G_1, G_2)\) satisfy the fundamental equations \eqref{funeq}, as shown by \eqref{pseudofunda2} and \eqref{pseudofunda3}, the uniqueness of the fundamental operators pair implies that \((G_1, G_2)\) is the fundamental operators pair of \((T_1^*, T_2^*)\). What remains to be shown is that \( (W_{1,22}, W_{2,22}) \) indeed forms the canonical \( q \)-commuting unitary pair associated with \( (T_1^*, T_2^*) \). It follows from \eqref{psedolift} that \( W^*_{j,22} Q_{T^*} = Q_{T^*} T^*_j \) for \( j = 1, 2 \), and \( W^*_D Q_{T^*} = Q_{T^*} T^* \). By applying \eqref{pseudo2} and \eqref{pseudo3}, we deduce that \((W_{1,22}, W_{2,22}, W_D)\) constitutes a pseudo \(q\)-commuting contractive triple on \(\cQ_{T^*}\).

			\begin{align*}W^*_{1,22} W^*_{2,22} W^n_D Q_{T^*}=q ^n W^*_{1,22} W^n_D W^*_{2,22}Q_{T^*}=W^n_D W^*_{1,22} W^*_{2,22}Q_{T^*}
				&=W^n_D W^*_{1,22}Q_{T^*}T^*_2\\
				&=W^n_DQ_{T^*}T^*_1T^*_2\\
				&=q W^n_DQ_{T^*}T^*\\
				&=qW_D^n W^*_DQ_{T^*}\\
				&=q W^*_D W^n_D Q_{T^*}.
			\end{align*}
			
			Since \(\cQ_{T^*}=\overline{\text{span}}\{W^n_DQ_{T^*}h : h\in \cH\}\), we find that \(W^*_{1,22} W^*_{2,22}=qW^*_D\) on \(\cQ_{T^*}\). Thus, the result follows directly from Corollary \ref{uniquecano}.
			
		\end{proof}
		
		The above theorem leads to the following corollary, which establishes the uniqueness of the pseudo \(q\)-commuting contractive lift of a pair \((T_1, T_2)\) of \(q\)-commuting contractions.
		
		\begin{corollary}\label{D-mod pseudo q-commuting}
Let \((T_1, T_2)\) be a \(q\)-commuting contractive pair acting on a Hilbert space \(\mathcal{H}\). Suppose \((\Pi, \mathbb{W}_1, \mathbb{W}_2, \mathbb{W})\) is a pseudo-$q$-commuting contractive lift of $(T_1,T_2,T_1T_2)$. Then\((\Pi, \mathbb{W}_1, \mathbb{W}_2, \mathbb{W})\) is  unitarily equivalent to the Douglas-model pseudo \(q\)-commuting contractive lift
\begin{align}\label{AsInLift}
\left(\Pi_D, \begin{bmatrix} M_{G_1^* + z G_2} R_q & 0 \\ 0 & W_1 \end{bmatrix}, \begin{bmatrix} R_{\oq} M_{G_2^* + z G_1} & 0 \\ 0 & W_1 \end{bmatrix}, \begin{bmatrix} M_z & 0 \\ 0 & W_D \end{bmatrix} \right).
\end{align}
		\end{corollary}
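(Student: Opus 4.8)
The plan is to reduce the statement to the uniqueness clause of Theorem \ref{D:mod unique lift} by transporting the given abstract pseudo $q$-commuting contractive lift onto the concrete Douglas model space. First I would note that, by condition (ii) in the lifting part of Definition \ref{pseudo triple}, the pair $(\Pi,\mathbb{W})$ is a minimal isometric lift of the single contraction $T=T_1T_2$. On the other hand, Theorem \ref{T:1-D} provides $(\Pi_D,V_D)$ as a minimal isometric lift of the same $T$ on $\cK_D$. Since any two minimal isometric lifts of a single contraction are unitarily equivalent in the strong sense that the intertwining unitary also matches the embeddings (see \cite{Nagy-Foias}, Chapter I, Theorem $4.1$), there is a unitary $\tau\colon\cK\to\cK_D$ with $\tau\mathbb{W}=V_D\tau$ and $\tau\Pi=\Pi_D$.

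Next I would conjugate the two remaining operators, setting $\mathbb{W}_j':=\tau\mathbb{W}_j\tau^*$ for $j=1,2$, and check that $(\Pi_D,\mathbb{W}_1',\mathbb{W}_2',V_D)$ is again a pseudo $q$-commuting contractive lift, now of exactly the form to which Theorem \ref{D:mod unique lift} applies. Because $\tau$ is unitary and intertwines $\mathbb{W}$ with $V_D$, conjugation preserves every algebraic relation of Definition \ref{pseudo triple}: the $\mathbb{W}_j'$ remain contractions, $(\mathbb{W}_1',V_D)$ stays $q$-commuting and $(\mathbb{W}_2',V_D)$ stays $\oq$-commuting, and the relation $\mathbb{W}_1'=\oq(\mathbb{W}_2')^*V_D$ follows by conjugating $\mathbb{W}_1=\oq\mathbb{W}_2^*\mathbb{W}$. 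For the lift condition one applies $\tau$ on the left of $\Pi(T_1^*,T_2^*,T^*)=(\mathbb{W}_1^*,\mathbb{W}_2^*,\mathbb{W}^*)\Pi$ and uses $\tau\Pi=\Pi_D$ together with $\tau\mathbb{W}_j^*=(\mathbb{W}_j')^*\tau$ and $\tau\mathbb{W}^*=V_D^*\tau$ to obtain $\Pi_D(T_1^*,T_2^*,T^*)=((\mathbb{W}_1')^*,(\mathbb{W}_2')^*,V_D^*)\Pi_D$.

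Finally I would invoke the uniqueness half of Theorem \ref{D:mod unique lift}: any pseudo $q$-commuting contractive lift of the form $(\Pi_D,\mathbb{W}_1,\mathbb{W}_2,V_D)$ must have its first two entries equal to the explicit pair built from the fundamental operators $(G_1,G_2)$ of $(T_1^*,T_2^*)$ and the canonical $q$-commuting unitary pair $(W_1,W_2)$. Applied to $(\Pi_D,\mathbb{W}_1',\mathbb{W}_2',V_D)$, this identifies $(\mathbb{W}_1',\mathbb{W}_2')$ with the stated Douglas-model operators, so $\tau$ is precisely the unitary realizing the equivalence of $(\Pi,\mathbb{W}_1,\mathbb{W}_2,\mathbb{W})$ with the lift in \eqref{AsInLift}. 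I expect the only genuinely delicate point to be the first step, where one needs the classical uniqueness theorem for minimal isometric lifts in the refined form that $\tau$ matches the embeddings ($\tau\Pi=\Pi_D$) and not merely the product isometries; everything thereafter is a formal transport of structure under a unitary, requiring no computation beyond bookkeeping.
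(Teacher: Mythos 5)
Your proposal is correct and follows essentially the same route as the paper: use uniqueness of the minimal isometric lift of $T=T_1T_2$ to produce a unitary $\tau$ with $\tau\Pi=\Pi_D$ and $\tau\mathbb{W}=V_D\tau$, transport the triple by conjugation, and then invoke the uniqueness clause of Theorem \ref{D:mod unique lift}. The only difference is that you spell out the verification that conjugation preserves the pseudo $q$-commuting lift structure, which the paper states without detail.
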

		
		\begin{proof}
			Since \((\Pi, \mathbb{W}_1, \mathbb{W}_2, \mathbb{W})\) is a pseudo \(q\)-commuting contractive lift, by definition, \((\Pi, \mathbb{W})\) is a minimal isometric lift of \(T = T_1 T_2\) on \(\mathcal{K}\). By the uniqueness of the minimal isometric lift of \(T\), there exists a unitary \(\tau: \mathcal{K} \to \mathcal{K}_D\) such that \(\tau \Pi = \Pi_D\) and \(\tau \mathbb{W} = V_D \tau\). Since \((\mathbb{W}_1, \mathbb{W}_2, \mathbb{W})\) is a pseudo \(q\)-commuting contractive triple and \(\tau\) is unitary, \(\tau (\mathbb{W}_1, \mathbb{W}_2, \mathbb{W}) \tau^*\) is also a pseudo \(q\)-commuting contractive triple. Again, since \((\Pi, \mathbb{W}_1, \mathbb{W}_2, \mathbb{W})\) is a lift of \((T_1, T_2, T = T_1 T_2)\), it follows that \[(\tau \Pi = \Pi_D, \tau \mathbb{W}_1 \tau^*, \tau \mathbb{W}_2 \tau^*, \tau \mathbb{W} \tau^* = V_D)\] is also a lift of \((T_1, T_2, T = T_1 T_2)\). By the Theorem \ref{D:mod unique lift}, we conclude that
			
			\[
			(\tau \mathbb{W}_1 \tau^*, \tau \mathbb{W}_2 \tau^* )= \left( \begin{bmatrix} M_{G_1^* + z G_2} R_q & 0 \\ 0 & W_1 \end{bmatrix}, 
			\begin{bmatrix} R_{\oq} M_{G_2^* + z G_1} & 0 \\ 0 & W_2 \end{bmatrix}
			\right).
			\]
			Hence, \(\tau\) implements a unitary equivalence between the pseudo \(q\)-commuting contractive lifts \((\Pi, \mathbb{W}_1, \mathbb{W}_2, \mathbb{W})\) and the lift as in \eqref{AsInLift}.
		\end{proof}
\subsection{The characteristic triple}\label{SS:CharcTriple}
The characteristic function, together with the fundamental operators and the canonical \(q\)-commuting unitaries provide  a complete unitary invariant for a pair of \(q\)-commuting contractions with the product being cnu.
	\begin{definition}
Given a pair \((T_1, T_2)\) of \(q\)-commuting contractions, let \(\mathbb{G} := \{G_1, G_2\}\) be the fundamental operators for $(T_1^*,T_2^*)$, and \(\mathbb{W} := \{W^{{\textrm{NF}}}_1, W^{{\textrm{NF}}}_2\}\) be the commuting unitary pair as defined in \eqref{unifunda}. The triple \((\mathbb{G}, \mathbb{W}, \Theta_T)\) is then called the {\em characteristic triple} for \((T_1, T_2)\). 
\end{definition}		
The characteristic triples for a $q$-commuting contractive pair exhibit the same characteristics as the characteristic functions of a contractive operator do. The first instance is the following.
\begin{thm}\label{T:CharcUniInv}
			Let \((T_1, T_2)\) and \((T'_1, T'_2)\) be two pairs of \(q\)-commuting contractions acting on a Hilbert space $\cH$. If \((T_1, T_2)\) and \((T'_1, T'_2)\) are unitarily equivalent, then their characteristic triples coincide. Moreover, if \(T = T_1T_2\) and \(T' = T'_1T'_2\) are cnu contractions, then the coverse also holds.
\end{thm}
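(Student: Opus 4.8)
The plan is to prove the two implications separately, using the Sz.-Nagy--Foias-type functional model of Theorem \ref{T:FunctMod} as the central tool, together with the invariance results already in hand: Theorem \ref{T:cano} for the canonical unitaries and the uniqueness in Theorem \ref{prof funda} for the fundamental operators. Throughout, given a unitary $\psi:\cH\to\cH'$ intertwining $(T_1,T_2)$ and $(T_1',T_2')$, I would set $u:=\psi|_{\cD_T}:\cD_T\to\cD_{T'}$ and $u_*:=\psi|_{\cD_{T^*}}:\cD_{T^*}\to\cD_{T'^*}$; these are unitaries since $\psi$ intertwines the defect operators, and $\hat u$ will denote the unitary $\overline{\Delta_{\Theta_T} L^2(\cD_T)}\to\overline{\Delta_{\Theta_{T'}} L^2(\cD_{T'})}$ induced by $u$ (legitimate once $u_*\Theta_T=\Theta_{T'}u$ is known, since then $\Delta_{\Theta_T}=u^*\Delta_{\Theta_{T'}}u$, whence $\hat u\Delta_{\Theta_T}=\Delta_{\Theta_{T'}}u$). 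The working notion of coincidence of the two triples is that such $u,u_*$ exist with $u_*\Theta_T=\Theta_{T'}u$, with $u_*G_j=G_j'u_*$ for $j=1,2$, and with $\hat u\,W^{\mathrm{NF}}_j=W'^{\mathrm{NF}}_j\,\hat u$ for $j=1,2$.

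For the forward implication (which needs no cnu hypothesis), the characteristic functions coincide by conjugating the defining formula for $\Theta_T$ by $\psi$, giving $u_*\Theta_T(z)=\Theta_{T'}(z)u$ exactly as in \eqref{CharcCoin}. The fundamental operators coincide by uniqueness: conjugating the equations \eqref{funeq} by $\psi$ shows $u_*G_ju_*^*$ solves the fundamental equations for $(T_1'^*,T_2'^*)$, so $u_*G_ju_*^*=G_j'$ by Theorem \ref{prof funda}. For the unitaries I would transport the equivalence through the Douglas model: by Theorem \ref{T:cano} the map $\tau_\psi$ of \eqref{taupsi} intertwines $(W_1,W_2)$ and $(W_1',W_2')$, and, combined with $u_*$ and the relation $u_*G_j=G_j'u_*$, it assembles into a unitary $\Psi_D=(I_{H^2}\otimes u_*)\oplus\tau_\psi$ intertwining the Douglas-model pseudo $q$-commuting contractive lifts of Theorem \ref{D:mod unique lift} and carrying $\Pi_D$ to $\Pi_D'\psi$. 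Conjugating $\Psi_D$ by the Douglas-to-Nagy--Foias unitaries $I\oplus\omega_{\mathrm{D,NF}}$ from \eqref{omega-DNF} produces $\Omega=(I_{H^2}\otimes u_*)\oplus w$, with $w=\omega_{\mathrm{D,NF}}'\,\tau_\psi\,\omega_{\mathrm{D,NF}}^*$, which intertwines the model pairs $(V^{\mathrm{NF}}_1,V^{\mathrm{NF}}_2)$ of \eqref{NFmod}, satisfies $\Omega\Pi_{\mathrm{NF}}=\Pi_{\mathrm{NF}}'\psi$ and $\Omega V_{\mathrm{NF}}=V_{\mathrm{NF}}'\Omega$; reading off its lower diagonal block yields $w\,W^{\mathrm{NF}}_j=W'^{\mathrm{NF}}_j\,w$.

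The hard part will be the final identification $w=\hat u$, which is precisely what upgrades ``some unitary intertwines the $W^{\mathrm{NF}}_j$'' to ``the \emph{induced} unitary $\hat u$ does''; this is the step that reconciles the operator-theoretic transfer $\tau_\psi$ of the canonical unitaries with the function-theoretic map coming from the characteristic function. I would settle it by uniqueness of the minimal isometric lift of the single contraction $T$: the constructed $\Omega$ is characterized among all unitaries $\cK_{\Theta_T}\to\cK_{\Theta_{T'}}$ by $\Omega V_{\mathrm{NF}}=V_{\mathrm{NF}}'\Omega$ and $\Omega\Pi_{\mathrm{NF}}=\Pi_{\mathrm{NF}}'\psi$, while classical Sz.-Nagy--Foias theory identifies this unique model intertwiner as the characteristic-function-induced map $(I_{H^2}\otimes u_*)\oplus\hat u$ (see \cite{Nagy-Foias}). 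Comparing lower blocks forces $w=\hat u$, so the triples coincide.

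For the converse I would assume $T,T'$ cnu and that the triples coincide via $u,u_*$, and then reverse the model construction. Form $\Omega=(I_{H^2}\otimes u_*)\oplus\hat u:\cK_{\Theta_T}\to\cK_{\Theta_{T'}}$. Using $u_*\Theta_T=\Theta_{T'}u$ together with $\hat u\Delta_{\Theta_T}=\Delta_{\Theta_{T'}}u$, it maps $\sbm{\Theta_T \\ \Delta_{\Theta_T}}H^2(\cD_T)$ onto $\sbm{\Theta_{T'} \\ \Delta_{\Theta_{T'}}}H^2(\cD_{T'})$, hence carries $\cH_{\Theta_T}$ onto $\cH_{\Theta_{T'}}$ (see \eqref{HthetaT}, \eqref{Ktheta}). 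The relation $u_*G_j=G_j'u_*$ shows $I_{H^2}\otimes u_*$ intertwines the multiplier blocks $M_{G_1^*+zG_2}R_q$ and $R_{\oq}M_{G_2^*+zG_1}$ with their primed versions, while $\hat u\,W^{\mathrm{NF}}_j=W'^{\mathrm{NF}}_j\,\hat u$ handles the unitary blocks, so $\Omega$ intertwines the two model pairs of Theorem \ref{T:FunctMod}. Compressing $\Omega$ to $\cH_{\Theta_T}$ and invoking Theorem \ref{T:FunctMod}, which for cnu $T,T'$ identifies the compressed model pairs with $(T_1,T_2)$ and $(T_1',T_2')$, then produces the desired unitary equivalence $(T_1,T_2)\cong(T_1',T_2')$.
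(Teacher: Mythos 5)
Your proposal is correct and follows essentially the same route as the paper: conjugate the characteristic function and the fundamental equations by the intertwining unitary, handle the canonical unitary pair by a uniqueness-of-lift argument, and for the converse build the block-diagonal unitary $(I_{H^2}\otimes u_*)\oplus\hat u$, check that it respects $\sbm{\Theta_T\\ \Delta_{\Theta_T}}H^2(\cD_T)$, and compress to $\cH_{\Theta_T}$ using Theorem \ref{T:FunctMod}. The only (harmless) divergence is the forward direction's treatment of $(W^{\rm NF}_1,W^{\rm NF}_2)$: you transport $\tau_\psi$ from Theorem \ref{T:cano} through the Douglas-to-Nagy--Foias unitaries and then identify the resulting intertwiner with $\hat u$ via the classical uniqueness of the minimal isometric lift of the single contraction $T$, whereas the paper pulls $\Pi'_{\rm NF}\,U$ back by $\bigl((I_{H^2}\otimes u_*^*)\oplus\omega_u^*\bigr)$ and invokes the uniqueness of the pseudo $q$-commuting contractive lift (Corollary \ref{D-mod pseudo q-commuting}) --- both arguments rest on the same minimality principle.
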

		\begin{proof}
Let \(((G_1, G_2),(W_1,W_2), \Theta_T)\) and \(((G'_1, G'_2),(W'_1,W'_2), \Theta_{T'})\) be the characteristic triples of \((T_1, T_2)\) and \((T'_1, T'_2)\), respectively. To prove the forward implication, suppose \( U : \cH \to \cH' \) is a unitary operator such that \( U(T_1, T_2) = (T'_1, T'_2)U \). Then, the intertwining relations \( U D_T = D_{T'} U \) and \( U D_{T^*} = D_{T'^*} U \) imply that the operators \( u \) and \( u_* \), defined as the unitary restrictions of \( U \) to \( D_T \) and \( D_{T^*} \), respectively, satisfy the following property for all \( h \in \cH \) and $z\in\mathbb{D}$:
			
			\[
			\begin{aligned}
				\Theta_{T'}(z)\, u\, D_T h &= \restr{\left(-T' + z D_{T'^*} (I_{\cH'} - z T'^*)^{-1} D_{T'}\right)}{\cD_{T'}} u D_T h \\
				&= u_* \left(-T + z D_{T^*} (I_{\cH} - z T^*)^{-1} D_T\right) D_T h\\
				&=u_*\Theta_T(z).
			\end{aligned}
			\]
			
			Hence, \(\Theta_T\) and \(\Theta_{T'}\) coincide. The condition \((G'_1, G'_2) = u_* (G_1, G_2) u_*^*\) follows from the fundamental equations for \((T_1, T_2)\) and \((T'_1, T'_2)\) defined in \eqref{funeq}, together with the intertwining relations \( U D_T = D_{T'} U \) and \( U D_{T^*} = D_{T'^*} U \).


		It remains to show that \((W_1, W_2)\) and \((W'_1, W'_2)\) are equivalent, implemented by the unitary \(\omega_u\). Let 
		\[\left(\Pi_{\text{NF}},\ M_{G_1^* + z G_2} R_q \oplus W_1,\ R_{\oq} M_{G_2^* + z G_1} \oplus W_2,\ M_z \oplus \left.\!M_\zeta\right|_{\overline{\Delta_{\Theta_T} L^2(\mathcal{D}_T)}}\right)\] on \(\mathcal{K}_{\theta_T}\), and  \[
		\left(\Pi'_{\text{NF}},\ M_{{G'_1}^* + z G'_2} R_q \oplus W'_1,\ R_{\oq} M_{{G'_2}^* + z G'_1} \oplus W'_2,\ M_z \oplus \left.\!M_\zeta\right|_{\overline{\Delta_{\Theta_{T'}} L^2(\mathcal{D}_{T'})}}\right)\] on \(\mathcal{K}_{\theta_{T'}}\), be the pseudo \(q\)-commuting contractive lifts of \((T_1,T_2,T)\) and \((T'_1,T'_2,T')\), respectively. Consider the isometry \(\Pi'': \mathcal{H} \to H^2(\mathcal{D}_{T^*}) \oplus \overline{\Delta_{\Theta_T} L^2(\mathcal{D}_T)}\) defined by  \[\Pi'' = \left( (I_{H^2} \otimes u^*_*) \oplus \omega^*_u \right) \Pi'_{\text{NF}} U.\]
		
		Then it follows that
		\begin{align*}
			\Big(
			\Pi'',\ 
			M_{G_1^* + z G_2} R_q \oplus W''_1,\ 
			R_{\oq} M_{G_2^* + z G_1} \oplus W''_2,\ 
			M_z \oplus \left. M_\zeta \right|_{\overline{\Delta_{\Theta_T} L^2(\mathcal{D}_T)}}
			\Big)
		\end{align*}
		is a pseudo \(q\)-commuting contractive lift of \((T_1, T_2, T)\), where \((W''_1, W''_2) = \omega_u^* (W'_1, W'_2) \omega_u\). Establishing that \(\Pi'' = \Pi_{\text{NF}}\) via the intertwining properties of \(U\) and \(\omega_u\) allows us to invoke Corollary \ref{D-mod pseudo q-commuting}, from which it follows that \(\omega_u^* (W'_1, W'_2) \omega_u = (W''_1, W''_2) = (W_1, W_2)\).

		Conversely, suppose that \(T, T'\) are cnu contractions and \(((G_1, G_2), (W_1, W_2), \Theta_T)\) and \(((G'_1, G'_2), (W_1, W_2), \Theta_{T'})\) coincide. Then by Theorem \ref{T:FunctMod}, \((T_1,T_2)\) and \((T'_1,T'_2)\) are unitarily equivalent to their corresponding functional model. The coincidence of characteristic triples implies that  the unitary operator 

		\[
		I_{H^2} \otimes u \oplus \omega_u : H^2(\mathcal{D}_{T^*}) \oplus \overline{\Delta_{\Theta_T} L^2(\mathcal{D}_T)} \to H^2(\mathcal{D}_{T'^*}) \oplus \overline{\Delta_{\Theta_{T'}} L^2(\mathcal{D}_{T'})}
		\]
		intertwines the operator triple
		\[
		(M_{G_1^* + z G_2} R_q \oplus W_1,\ R_{\oq} M_{G_2^* + z G_1} \oplus W_2,\ M_z \oplus \restr{M_\zeta}{\overline{\Delta_{\Theta_T} L^2(\mathcal{D}_T)}})
		\]
		with
		\[
		(M_{{G'_1}^* + z G'_2} R_q \oplus W'_1,\ R_{\oq} M_{{G'_2}^* + z G'_1} \oplus W'_2,\ M_z \oplus \restr{M_\zeta}{\overline{\Delta_{\Theta_{T'}} L^2(\mathcal{D}_{T'})}}),
		\]
	where  $u:\mathcal{D}_{T} \to \mathcal{D}_{T^*}$ and $u_*:\mathcal{D}_{T'} \to \mathcal{D}_{T'^*}$ are the unitary operators implementing the coincidence. 
		Moreover, the unitary operator \( u_* \oplus \omega_u \) maps \( \mathcal{K}_{\Theta_T} \) onto \( \mathcal{K}_{\Theta_{T'}} \), and \( (\Theta_T \oplus \Delta_{\Theta_T}) H^2(\mathcal{D}_T) \) onto \( (\Theta_{T'} \oplus \Delta_{\Theta_{T'}}) H^2(\mathcal{D}_{T'}) \). Hence, it maps \( \mathcal{H}_{\Theta_T} \) onto \( \mathcal{H}_{\Theta_{T'}} \). Thus, the functional models for \((T_1, T_2)\) and \((T'_1, T'_2)\) are unitarily equivalent. Therefore, we conclude that \((T_1, T_2)\) and \((T'_1, T'_2)\) are unitarily equivalent.
	\end{proof}
		
	\subsection{Functional model}	
Theorem \ref{T:FunctMod} gives a unitary copy on function space of an abstract $q$-commuting contractive pair $\underline{T}=(T_1,T_2)$ in terms of its characteristic triple $\Xi_{\underline{T}}=((G_1,G_2),)(W_1^{\rm NF},W_2^{\rm NF}),\Theta_T)$. Here we explore a converse of this: We identify a triple $\Xi=((G_1,G_2),)(W_1,W_2),\Theta)$ consisting of operators $G_j$, $W_j$ for $j=1,2$ acting on appropriate spaces and a contractive analytic function $\Theta$, from which a $q$-commuting contractive pair $\underline{T}_\Xi$ can be defined so that the characteristic triple for the $q$-commuting contractive pair $\underline{T}_\Xi$ coincides with $\Xi$ in the following sense.
\begin{definition}\label{charcoincide}
Let \((\cD, \cD_*, \Theta)\) and \((\cD', \cD'_*, \Theta')\) be two purely contractive analytic functions (i.e., both functions satisfy $\|\Theta(0)f\|<\|f\|$ for every non-zero vector $f$). Let \((G_1, G_2)\) and \((G'_1, G'_2)\) be pairs of contractions on \(\cD_*\) and \(\cD'_*\), respectively. Additionally, let \((W_1, W_2)\) and \((W'_1, W'_2)\) be pairs of \(q\)-commuting unitaries, whose products are equal to \(M_{\zeta}\) on \(\overline{\Delta_{\Theta} L^2(\cD)}\) and \(\overline{\Delta_{\Theta'} L^2(\cD')}\), respectively. Then we say that the two triples \(((G_1, G_2), (W_1, W_2), \Theta)\) and \(((G'_1, G'_2), (W'_1, W'_2), \Theta')\) \textit{coincide} if the following conditions are satisfied:
			\begin{enumerate}[label=\roman*)]
				\item The pairs \((\cD, \cD_*, \Theta)\) and \((\cD', \cD'_*, \Theta')\) coincide, i.e., there exist unitary operators \(u: \cD \to \cD'\) and \(u_*: \cD_* \to \cD'_*\) such that \(u_* \Theta(z) = \Theta'(z) u\) for all \(z \in \mathbb{D}\).
				
				\item The unitary operators \(u\) and \(u_*\) satisfy the intertwining relations:
				
				\[
				(G'_1, G'_2) = u_* (G_1, G_2) u_*^* \quad \text{and} \quad (W'_1, W'_2) = \omega_u (W_1, W_2) \omega_u^*
				\]
				where \(\omega_u: \overline{\Delta_{\Theta} L^2(\cD)} \to \overline{\Delta_{\Theta'} L^2(\cD')}\) is the unitary map defined by 
				$$
				\omega_u = \restr{(I_{L^2} \otimes u)}{\overline{\Delta_{\Theta} L^2(\cD)}}.
				$$
			\end{enumerate}
		\end{definition}

	Next, we define those triples $\Xi=((G_1,G_2),)(W_1,W_2),\Theta)$ consisting of operators $G_j$, $W_j$ for $j=1,2$ acting on appropriate spaces and a contractive analytic function $\Theta$, which would give us the converse of Theorem \ref{T:FunctMod} alluded to in the beginning of this subsection.

	\begin{definition}\label{D:admiss}
		Let \((\mathcal{D}, \mathcal{D}_*, \Theta)\) be a contractive analytic function, and let \((G_1, G_2)\) be a pair of contractions on \(\mathcal{D}_*\). Let \((W_1, W_2)\) be a pair of \(q\)-commuting unitaries on \(\overline{\Delta_{\Theta} L^2(\mathcal{D})}\). We say that the triple \(((G_1, G_2), (W_1, W_2), \Theta)\) is \textit{admissible} if it satisfies the following admissibility conditions: 
		\begin{enumerate}
			\item \( M_{G_1^* + z G_2} R_q \oplus W_1 \) and \( R_{\oq} M_{G_2^* + z G_1} \oplus W_2 \) are contractions on \( H^2(\mathcal{D}_{*}) \oplus \overline{\Delta_{\Theta}L^2(\mathcal{D})} \).
			
			\item \( W_1 W_2 = \restr{ M_{\zeta}}{\overline{\Delta_{\Theta} L^2(\mathcal{D})}} \).
			
			\item The space \( \cQ_{\Theta} := (\Theta \oplus \Delta_{\Theta}) H^2(\mathcal{D}) \) is jointly invariant under 
			\[
			(M_{G_1^* + z G_2} R_q \oplus W_1, R_{\oq} M_{G_2^* + z G_1} \oplus W_2, M_z \oplus \restr{ M_{\zeta}}{\overline{\Delta_{\Theta} L^2(\mathcal{D})}}).
			\]
			
			\item Let \( \mathcal{K}_{\Theta} := H^2(\mathcal{D}_{*}) \oplus \overline{\Delta_{\Theta} L^2(\mathcal{D})} \) and \( \mathcal{H}_{\Theta} := \mathcal{K}_{\Theta} \ominus \cQ_{\Theta} \). Then, we have the following:
			\[
			\begin{aligned}
				\restr{q(R_{\oq} M^*_{G_1^* + z G_2} \oplus W^{*}_1 )(M^*_{G_2^* + z G_1} \oplus W^{*}_2)}{\cH_{\Theta}}
				&= \restr{(M^*_{G_2^* + z G_1} \oplus W^{*}_2) (R_{\oq} M^*_{G_1^* + z G_2} \oplus W^{*}_1)}{\cH_{\Theta}} \\
				&= \restr{(M^*_z \oplus \restr{M^*_{\zeta}}{\overline{\Delta_{\Theta} L^2(\cD)}})}{\cH_{\Theta}}.
			\end{aligned}
			\]
		\end{enumerate}
If, in addition, \(\Theta: {D} \to \cB(\cD, \cD_*)\) is purely contractive, i.e., \(\|\Theta(0)f\| < \|f\|\) for all \(f \in \cD \setminus \{0\}\), we say that \(((G_1, G_2), (W_1, W_2), \Theta)\) is \textit{purely contractive admissible triple}.
	\end{definition}



\begin{example}
Theorem \ref{T:FunctMod} provides us with ample examples of admissible triples, viz., the characteristic triple \(((G_1, G_2), (W_1, W_2), \Theta_T)\) of a pair of \(q\)-commuting contractions \((T_1, T_2)\) is an admissible triple. Moreover, if $T=T_1T_2=qT_2T_1$ is cnu, then the characteristic triple for $(T_1,T_2)$ is a purely contractive admissible triple because in this case $\Theta_T$ is purely contractive. 
\end{example}
	\begin{thm}
		Let \(((G_1, G_2), (W_1, W_2), \Theta)\) be a purely contractive admissible triple. Consider the operator pair defined by
		\[
		(T_1, T_2) = \restr{P_{\cH_{\Theta}}(M_{G_1^* + z G_2} R_q \oplus W_1, R_{\oq} M_{G_2^* + z G_1} \oplus W_2)}{\cH_\Theta}.
		\]Then the characteristic triple of the pair \((T_1, T_2)\) coincides with \(((G_1, G_2), (W_1, W_2), \Theta)\).
	\end{thm}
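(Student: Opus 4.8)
The plan is to exhibit $(T_1,T_2)$, its product $T=T_1T_2$, and the ambient model operators as a pseudo $q$-commuting contractive lift in the sense of Definition~\ref{pseudo triple}, and then let the uniqueness theorem (Theorem~\ref{D:mod unique lift}, together with Corollary~\ref{D-mod pseudo q-commuting}) read off the three pieces of the characteristic triple. Throughout write $A_1 = M_{G_1^* + zG_2}R_q \oplus W_1$, $A_2 = R_{\oq}M_{G_2^* + zG_1} \oplus W_2$ and $A = M_z \oplus \restr{M_\zeta}{\overline{\Delta_{\Theta} L^2(\cD)}}$ on $\cK_\Theta = H^2(\cD_*)\oplus\overline{\Delta_{\Theta} L^2(\cD)}$, so that $(T_1,T_2) = \restr{P_{\cH_\Theta}(A_1,A_2)}{\cH_\Theta}$.

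First I would establish that $(T_1,T_2)$ is a $q$-commuting contractive pair and identify its product. Contractivity of the compressions $T_j$ is immediate from admissibility condition (1). Admissibility condition (3) says that $\cQ_\Theta = \cK_\Theta \ominus \cH_\Theta$ is invariant under $A_1,A_2,A$, so $\cH_\Theta$ is co-invariant; consequently $T_j^* = \restr{A_j^*}{\cH_\Theta}$ and compositions of the compressions compress the compositions. Admissibility condition (4) is then exactly the statement that the compressed pair $q$-commutes and that $T^* = \restr{A^*}{\cH_\Theta}$, i.e. $T = T_\Theta = \restr{P_{\cH_\Theta}(M_z\oplus M_\zeta)}{\cH_\Theta}$ is the Sz.-Nagy--Foias model operator of $\Theta$. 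Because $\Theta$ is purely contractive, $T_\Theta$ is cnu and, by the uniqueness of minimal isometric lifts, the inclusion $\Pi_\Theta:\cH_\Theta\hookrightarrow\cK_\Theta$ together with $A$ plays the role of the Sz.-Nagy--Foias lift $(\Pi_{\rm NF}, V_{\rm NF})$ of $T$.

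Next I would verify that $(A_1,A_2,A)$ is a pseudo $q$-commuting contractive triple. The three axioms of Definition~\ref{pseudo triple} are direct computations on the model, precisely as in the proof of Theorem~\ref{D:mod unique lift}: $A$ is an isometry (using condition (2) for its $\cQ_\Theta$-block), the identity $R_q M_z = q M_z R_q$ gives the $q$-commutation of $(A_1,A)$ and the $q$-commutation of $(W_1,W_2)$ gives the $\oq$-commutation of $(A_2,A)$, while the relation $A_1 = \oq A_2^* A$ holds blockwise---on $H^2(\cD_*)$ from $\oq M_{G_2^*+zG_1}^* R_q M_z = M_{G_1^*+zG_2}R_q$ and on $\cQ_\Theta$ from $\oq W_2^* W_1 W_2 = W_1$. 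Combined with the co-invariance and the resulting intertwining $\Pi_\Theta(T_1^*,T_2^*,T^*) = (A_1^*,A_2^*,A^*)\Pi_\Theta$, this shows that $(\Pi_\Theta, A_1, A_2, A)$ is a pseudo $q$-commuting contractive lift of $(T_1,T_2,T)$.

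It then remains to match the blocks. Transporting by the unitary $U_{\rm NF} = I_{H^2(\cD_*)}\oplus\omega_{\rm D,NF}$ carries this lift to one of the form $(\Pi_D, \mathbb{W}_1, \mathbb{W}_2, V_D)$ on the Douglas model space, where the $H^2(\cD_*)$-blocks of $\mathbb{W}_j$ are unchanged and the $\cQ_{T^*}$-blocks become $\omega_{\rm D,NF}^* W_j \omega_{\rm D,NF}$; the uniqueness assertion of Theorem~\ref{D:mod unique lift} then forces $(\mathbb{W}_1,\mathbb{W}_2) = (\mathbb{W}_1^D,\mathbb{W}_2^D)$. Matching the $H^2(\cD_*)$-blocks identifies $(G_1,G_2)$ with the fundamental operators of $(T_1^*,T_2^*)$, and matching the $\cQ$-blocks together with \eqref{unifunda} identifies $W_j$ with $W_j^{\textrm{NF}}$. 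Finally, since $T = T_\Theta$ is cnu, \cite[Theorem VI.3.1]{Nagy-Foias} gives that $\Theta_T$ coincides with $\Theta$ via unitaries $u:\cD_T\to\cD$ and $u_*:\cD_{T^*}\to\cD_*$; feeding $u,u_*$ and the induced $\omega_u$ through the two identifications above yields exactly the relations of Definition~\ref{charcoincide}. I expect the principal obstacle to lie in this last bookkeeping: one must track the coincidence unitaries $u,u_*,\omega_u$ and the passage between the Douglas-model canonical unitaries on $\cQ_{T^*}$ and the Sz.-Nagy--Foias unitaries on $\overline{\Delta_{\Theta}L^2(\cD)}$ so that the fundamental operators and canonical unitaries are identified up to precisely these unitaries, and not merely up to some unspecified equivalence.
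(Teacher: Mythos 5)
Your proposal is correct and follows essentially the same route as the paper: identify $T$ with the Sz.-Nagy--Foias model operator of $\Theta$ (cnu since $\Theta$ is purely contractive), invoke \cite[Theorem VI.3.1]{Nagy-Foias} to obtain the coincidence $u_*\Theta=\Theta_T u$, exhibit the ambient model operators as a pseudo $q$-commuting contractive lift of $(T_1,T_2,T)$, and let the uniqueness of such lifts (Theorem \ref{D:mod unique lift} via Corollary \ref{D-mod pseudo q-commuting}) identify $(G_1,G_2)$ with the fundamental operators and $(W_1,W_2)$ with the canonical unitaries. The ``bookkeeping'' you defer at the end is closed in the paper by a single observation: the unitary intertwining two minimal isometric lifts of $T$ is unique, and $u_*\oplus\omega_u$ is checked to be such an intertwiner, so the unitary produced by Corollary \ref{D-mod pseudo q-commuting} must equal $u_*\oplus\omega_u$, which converts the block identifications into exactly the coincidence relations of Definition \ref{charcoincide}.
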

	\begin{proof}
	By part (1) of Definition \ref{D:admiss}, it follows that \((T_1, T_2)\), as defined in the statement, is a pair of \(q\)-commuting contractions. Moreover, by part (4) of Definition \ref{D:admiss}, the product operator \(T := T_1 T_2\) is given by
		\[
		T = \restr{P_{\cH_{\Theta}}\left(M_z \oplus \restr{M_{\zeta}}{\overline{\Delta_{\Theta} L^2(\mathcal{D})}}\right)}{\cH_\Theta}.
		\]
	Therefore by the Sz.-Nagy--Foias model theory for single contractions (see \cite[Theorem VI.3.1]{Nagy-Foias}), \(T\) is a cnu contraction. Let \(((G'_1, G'_2), (W'_1, W'_2), \Theta_T)\) be the characteristic triple of \((T_1, T_2)\). We need to show that \(((G_1, G_2), (W_1, W_2), \Theta)\) coincides with \(((G'_1, G'_2), (W'_1, W'_2), \Theta_T)\). Since \(\Theta\) is a purely contractive analytic function, by \cite[Theorem VI.3.1]{Nagy-Foias}, \(\Theta\) coincides with the characteristic function \(\Theta_T\) of the model operator \(T\), i.e., there exist unitary operators \(u: \mathcal{D} \to \mathcal{D}_T\) and \(u_*: \mathcal{D}_* \to \mathcal{D}_{T^*}\) such that \(\Theta_T u = u_* \Theta\). Thus we already have established condition (i) of Definition \ref{charcoincide}. 
	
	To show condition (ii) of Definition \ref{charcoincide}, we proceed as follows. Let $\omega_u$ be the unitary operator corresponding to $u$ given by
$$
				\omega_u = \restr{(I_{L^2} \otimes u)}{\overline{\Delta_{\Theta} L^2(\cD)}}.
$$
Then the unitary operator \( u_* \oplus \omega_u \) maps 
\[
 \cK_{\Theta} =\begin{bmatrix}
 H^2(\cD_*)\\
\overline{\Delta_\Theta L^2(\cD)}
 \end{bmatrix}
 \] onto 
 \[ 
 \cK_{\Theta_T} =\begin{bmatrix}
 H^2(\cD_{T^*}\\
\overline{\Delta_{\Theta_T} L^2(\cD_T)}
 \end{bmatrix},
 \] and it takes the closed subspace
 \( \sbm{
   \Theta \\
    \Delta_{\Theta}
} H^2(\cD)
\) onto the closed subspace
\(
\sbm{
\Theta_T \\
 \Delta_{\Theta_T}
} H^2(\cD_T)
  \). Therefore, it maps 
  \[
   \cH_{\Theta} =
   \begin{bmatrix}
 H^2(\cD_*)\\
\overline{\Delta_\Theta L^2(\cD)}
 \end{bmatrix} \ominus
 \begin{bmatrix}
 \Theta \\
    \Delta_{\Theta}
 \end{bmatrix} H^2(\cD)
   \] onto
    \[ 
    \cH_{\Theta_T} =
    \begin{bmatrix}
 H^2(\cD_{T^*}\\
\overline{\Delta_{\Theta_T} L^2(\cD_T)}
 \end{bmatrix}\ominus
 \begin{bmatrix}
 \Theta_T \\
 \Delta_{\Theta_T}
 \end{bmatrix} H^2(\cD_T).
  \] Let \(\tau\) be the restriction of \(u_* \oplus \omega_u\) to \(\cH_{\Theta}\). Then it is an easy check that the following diagram is commutative where \(i\) and  \(i'\) are the inclusion maps:
		\[
		\begin{tikzcd}
			\cH_{\Theta} \arrow[r, "i"] \arrow[d, "\tau"'] & \cK_{\Theta} \arrow[d, "u_* \oplus  \omega_u"] \\
			\cH_{\Theta_T} \arrow[r, "i'"] & \cK_{\Theta_T}
		\end{tikzcd}.
		\]
		It is easy to check that 
		\[
		\left(i,\ M_{G^*_1 + z G_2 R_q} \oplus W_1,\ R_{\oq} M_{G^*_2 + z G_1} \oplus W_2,\ M_z \oplus \restr{M_{\zeta}}{\overline{\Delta_{\Theta} L^2(\mathcal{D})}} \right)
		\]  
		acting on \(\cK_{\Theta}\), and   
		\[
		\left(i' \circ \tau,\ M_{G'^*_1 + z G'_2 R_q} \oplus W'_1,\ R_{\oq} M_{G'^*_2 + z G'_1} \oplus W'_2,\ M_z \oplus \restr{M_{\zeta}}{\overline{\Delta_{\mathbb T} L^2(\mathcal{D}_T)}} \right)
		\]  
		acting on \(\cK_{\Theta_T}\) both are pseudo \(q\)-commuting contractive lift of \((T_1, T_2,T_1T_2)\) . By Corollary~\ref{D-mod pseudo q-commuting}, there exists a unitary operator $U : \cK_{\Theta} \to \cK_{\Theta_T}$ such that $U \circ i = i' \circ \tau$ and
		\begin{align}\label{TheInt}
			&U \left( M_{G^*_1 + z G_2 R_q} \oplus W_1,\ R_{\oq} M_{G^*_2 + z G_1} \oplus W_2,\ M_z \oplus \restr{M_{\zeta}}{\overline{\Delta_{\Theta} L^2(\mathcal{D})}} \right)\\ &= \left( M_{G'^*_1 + z G'_2 R_q} \oplus W'_1,\ R_{\oq} M_{G'^*_2 + z G'_1} \oplus W'_2,\ M_z \oplus \restr{M_{\zeta}}{\overline{\Delta_{\mathbb T} L^2(\mathcal{D}_T)}} \right) U.
		\end{align} By definition of pseudo $q$-commuting contractive lift,
		\[
		M_z \oplus \restr{M_{\zeta}}{\overline{\Delta_{\Theta} L^2(\mathcal{D})}} \quad \text{and} \quad M_z \oplus \restr{M_{\zeta}}{\overline{\Delta_{\Theta} L^2(\mathcal{D})}}
		\]  are minimal isometric lifts of $T=T_1T_2$ and therefore they are unitarily equivalent by a unique unitary similarity transformation. Since the unitary \( u_* \oplus \omega_u \) is easily seen to intertwine these two minimal isometric lifts, it follows that 
$$
U=u_* \oplus \omega_u.
$$		
This coupled with the intertwining \eqref{TheInt} gives the required condition (ii) of Definition \ref{charcoincide}. Hence, \(((G_1, G_2), (W_1, W_2), \Theta)\) coincides with \(((G'_1, G'_2), (W'_1, W'_2), \Theta_T)\). This completes the proof.
	\end{proof}
	
\section*{Acknowledgements} The author thanks his advisor, Haripada Sau, for suggesting the problem and many helpful and stimulating discussions throughout the development of this work.


\begin{thebibliography}{99}
		
		
		
		
		
		
		
                            \bibitem{ando} T. And\^o, {\em{On a Pair of Commuting Contractions}}, Acta Sci. Math. (Szeged) \textbf{24} (1963), 88-90.
		
                           \bibitem{BS-CUP} J.A. Ball and H. Sau, \textit{Dilation and Model Theory for Pairs of Commuting Contraction Operators,} to appear in Cambridge Tracts in Mathematics.


                          \bibitem{BS-qBCL} J.A.\ Ball and H.\ Sau, {\em Models for $q$-commuting and doubly $q$-commuting pairs of isometries}, to appear in Oper.\ Theory Adv.\ Appl. 
		
		
		\bibitem{BPS} B. Bisai, S. Pal and P. Sahasrabuddhe, {\em On {$q$}-commuting co-extensions and {$q$}-commutant lifting}, Linear Algebra Appl., 658, (2023), 186--205.

                     \bibitem{BB} S. Barik, B. Bisai, {\em A generalization of {A}ndo's dilation, and isometric dilations for a class of tuples of {$q$}-commuting contractions}, Complex Anal. Oper. Theory 18, 131 (2024), 1-29.

		
		
		
                    \bibitem{BCL} C. A. Berger, L. A. Coburn and A. Lebow, {\em Representation and index theory for $C^{*}$-algebra generated by commuting isometries}, J. Functional Analysis 27 (1978), 51-99.
		
		
		
		
		
		
                     \bibitem{BhSrPal} T. Bhattacharyya, S. Pal and S. Shyam Roy, {\em{Dilations of $\Gamma$-contractions
by solving operator equations}}, Adv. Math. \textbf{230} (2012), 577-606.
		
		
		
		
		
		
		
		
		
                      \bibitem{Doug-Dilation} R. G. Douglas, {\em Structure theory for operators. I.}, J. Reine Angew. Math. \textbf{232} (1968) 180-193.
		
		
		
		
		
		
		
		
		
                      \bibitem{Halmos} P.R.~Halmos, {\em Shifts on Hilbert spaces}, J. Reine Angew. Math. \textbf{208} (1961) 102-112.
		
		\bibitem{Nagy-Foias} B. Sz.-Nagy, C. Foias, H. Bercovici, and L. K$\acute{\text{e}}$rchy, {\em{Harmonic Analysis of Operators on Hilbert space}}, Revised and enlarged edition, Universitext, Springer, New York, 2010.
		
                      \bibitem{KM2019} D.\ K.\ Keshari and N.\ Mallick, {\em q-commuting dilation}, Proc. Amer. Math. Soc. \textbf{147} (2019), 655-669.
		
		(2019), 171–189.
		\bibitem {GS} M. Gerhold, O. M. Shalit,  {\em Dilations of {$q$}-commuting unitaries}, Int. Math. Res. Not. IMRN, (2022), 1, 63--88.
                     \bibitem{MS} N. Mallick, and K. Sumesh, {\em On a generalization of {A}ndo's dilation theorem}, Acta Sci. Math. (Szeged), 86, (2020), 273--286.


		
		
                     \bibitem{Parrott} S. Parrott, {\em Unitary dilations for commuting contractions}, Pacific J.~Math., 34(1970), 481-490.
		
		
		
		
                     \bibitem{Sebestyen} Z. Sebesty\'en, {\em Anticommutant lifting and anticommuting dilation}, Proc. Amer. Math. Soc. {\bf121} (1994), 133-136.
		
		
                   \bibitem{sz-nagy}B.  Sz.-Nagy, {\em{Sur les contractions de l'espace de Hilbert}}, Acta Sci. Math. \textbf{15} (1953), 87-92.
		
		
                      \bibitem{Schaffer} J.J. Sch\"affer, {\em On unitary dilations of contractions}, Proc. Amer. Math. Soc. 6 (1955), 322. MR 16, 934c.
		
		
		
		
		
		
		
		
 
 		
		\bibitem{von} J. von Neumann, {\em Allgemeine Eigenwerttheorie Hermitescher Funktionaloperatoren}, (German) Math. Ann. 102 (1930), no. 1, 49–131. 
		
                      \bibitem{Tomar}N. Tomar, {\em The $q$-commuting contractions and $*$-regular dilations}, J. Math. Anal. Appl. (2025), Paper No. 130214.

		\bibitem{Wold} H. Wold, {\em A Study in the Analysis of Stationary Time Series}, Almqvist \& Wiksell, Stockholm,
		(1954).
		
		
		
		
	\end{thebibliography}
\end{document}